\documentclass{siamart251216}
\usepackage{verbatim}
\usepackage{mathrsfs}
\usepackage{amsmath}
\usepackage{amssymb}
\usepackage{graphicx}
\usepackage{microtype}

\makeatletter
\usepackage{amsfonts}
\usepackage{bm}
\usepackage{color}
\newcommand{\R}{\mathbb{R}}
\newcommand{\E}{\mathbb{E}}

\newcommand{\Piopt}{\Pi_{\mathrm{opt}}}

\newtheorem{thm}{Theorem}
\newtheorem{rem}{Remark}
\newtheorem{cor}{Corollary}
\newtheorem{prop}{Proposition}
\newtheorem{example}{Example}
\newtheorem{lem}{Lemma}
\newtheorem{defn}{Definition}
\newtheorem{assumption}{Assumption}

\usepackage{babel}
\makeatother

\begin{document}
\title{Wasserstein Contraction of Stochastic Systems on Manifolds: A Differential Approach}

\author{Dongjun Wu}

\maketitle


\begin{abstract}
Contraction analysis of dynamical systems turns a global convergence problem into a local one. If the variational system is stable in a uniform sense, then all trajectories of the original system converge toward each other. This paper develops a stochastic analogue for continuous-time stochastic systems, where convergence is measured by Wasserstein distance between probability laws. The key step is to employ mean-square differentiation to construct stochastic variational dynamics. Its uniform $L_2$ stability can then be integrated along curves of initial conditions, yielding Wasserstein contraction of the associated probability laws. This yields criteria formulated in terms of differential Lyapunov functions, and on manifolds, it leads to intrinsic Riemannian formulations.  A distinctive feature of the stochastic setting is that the coupling of the noises becomes a design freedom. To exploit this freedom, we introduce an orthogonal differential coupling that extends the framework beyond synchronous coupling. The resulting criterion explicitly exposes how the curvature of the state space enters the contraction mechanism and, in the orthonormal-frame case, recovers the classical Ricci-curvature structure. Finally, we apply the framework to stochastic control on a compact manifold. By introducing stochastic control input, we overcome a deterministic obstruction to global stabilization. That is, a system that cannot be globally stabilized through smooth time-invariant deterministic feedback is rendered globally contracting in $W_2$, and its steady state distribution can be made arbitrarily concentrated around a prescribed target equilibrium.
\end{abstract}

\section{Introduction}


Contraction analysis provides a differential approach to increm\-ental
stability. Instead of studying convergence to a prescribed equilibrium
or reference trajectory, it asks whether any two solutions of a dynamical
system converge toward one another. For a deterministic system 
\begin{equation} \label{eq:sys-ODE}
\dot{x}=f(t,x),
\end{equation}
such convergence is implied by uniform (asymptotic or exponential) stability of the following \emph{variational dynamics}
\[
\dot{\delta x}=\frac{\partial f}{\partial x}(t,x)\delta x,
\]which characterizes the local or infinitesimal behavior of the system \eqref{eq:sys-ODE}.
Thus the variational system turns a global convergence question to a local one.
This principle dates back
to the work of Demidovich \cite{demidovich1967lectures} and was developed
systematically by Lohmiller and Slotine \cite{lohmiller1998contraction,lohmiller1999}.
It has since led to abundant applications to system analysis
\cite{davydov2022non,kawano2019contraction,ruffer2013convergent,sontag2010contractive},
and synthesis \cite{dani2014observer,manchester2017control,reyes2017tracking,reyes2018virtual,sanfelice2011convergence,wu2024stability}.
A fundamental development is the replacement of a fixed Euclidean
metric by a state-dependent Riemannian, or more generally Finsler
metric \cite{forni2013differential,Wu2021}. 

Most of the existing literature on contraction analysis is concerned with determinist\-ic systems.
In recent years, there has been a growing interest to extend the theories to stochastic systems.
Such an extension is relevant for control systems subject to process noise and
state-dependent stochastic disturbances, for which contraction of individual sample paths is generally not 
expected. Different contraction metrics of stochastic systems 
have thus been considered in the literature, such as mean-square contraction
\cite{pham2009contraction,pham2013stochastic,Koo2026}, moment contraction
\cite{aminzare2022stochastic}, and Wasserstein
contraction of probability measures \cite{bouvrie2019wasserstein}.
These results were typically obtained through independent-noise comparisons and
therefore contain a noise-dependent residual.

More recently, stochastic contraction has been investigated under
common stochas\-tic noise.
For discrete-time systems,
\cite{kawano2023contraction,kawano2025contraction} developed
moment-contraction results.
In continuous time, \cite{kawano2026incremental}
established $W_p$ incremental input-to-state stability for
state-independent, time-varying diffusion and a drift contracting
in a fixed norm.
Almost-sure contraction has been studied in
\cite{MONMARCHE2023,Koo2026}, with \cite{Koo2026} also addressing
$L_p$ stochastic contraction.

In this paper, we study Wasserstein contraction of the probability
laws of continuou\-s-time stochastic systems.
Let $\mu_t$ and $\nu_t$ denote the laws at time $t$ of the same
stochastic system with initial laws $\mu_0$ and $\nu_0$, respectively.
We seek estimates of the form
\begin{equation}
W_p(\mu_t,\nu_t)
\le Ke^{-\lambda t}W_p(\mu_0,\nu_0),
\quad t\ge0,
\label{eq:intro-W2}
\end{equation}
where $W_p$ is the $p$-Wasserstein distance and $K\ge1$,
$\lambda>0$ are constants.

$W_p$ contraction can be characterized in terms of the diffusion semigroup
associated with the underlying stochastic system and is therefore closely linked to properties
of the semigroup. From this perspective,
methods based-on gradient estimates, functional
inequalities, geometric properties of the underlying state space,
and explicit coupling constructions have been extensively developed; see, e.g.,
\cite{baudoin2016wasserstein,cheng2021exponential,kuwada2010duality,
vonrenesse2005transport,wang2020exponential,pascu2016shy}.
These methods have led to powerful contraction results,
especially for diffusions on manifolds, but often rely on the structures of the state space or the systems.

Within the contraction-theoretic literature, Wasserstein contraction
have been established in 
\cite{bouvrie2019wasserstein,kawano2026incremental}.
The estimate in \cite{bouvrie2019wasserstein} contains
a noise-dependent residual arising from an independent-noise
comparison, whereas the residual-free Wasserstein result in
\cite{kawano2026incremental} assumes state-independent diffusion
and a drift contracting in a fixed norm.
In the latter setting, the diffusion terms cancel under
synchronous coupling, so contraction follows from the contraction
property of the drift.
For state-dependent diffusion, this cancellation generally no
longer holds, and derivatives of the diffusion fields enter
the stochastic variational dynamics.

To address these limitations, we develop contraction
conditions using \emph{mean-square variational systems} and an
$L_2$ calculus of stochastic processes.
In Euclidean space, the trajectory of a stochastic system at each fixed time is viewed
as an $L_2$-valued function of its initial condition.
Its mean-square derivative satisfies a stochastic variational
system involving the derivatives of both the drift and the
diffusion fields.
The $L_2$ calculus allows differential contraction estimates
for this system to be integrated along curves of initial
conditions, yielding estimates between coupled solutions.
An optimal coupling of the initial laws then turns these
estimates into Wasserstein contraction.
This construction follows the local-to-global principle of
deterministic contraction analysis. That is, rather than establishing
semigroup estimates directly, we deduce contraction of probability
laws from contraction of the stochastic variational dynamics.
The differential conditions accommodate state-dependent diffusion
and general differential Lyapunov functions, including time- and
state-dependent contraction metrics.
Based on the results in Euclidean space, we extend the construction intrinsically to Riemannian
manifolds and incorporate orthogonal differential coupling,
so that both the geometry of the state space and the choice
of coupling enter the contraction criterion.

Our starting
point is the stochastic variational dynamics. In Euclidean space,
consider an Itô SDE 
\begin{equation}
dX_{t}=f(t,X_{t})\,dt+\sum^{m}_{i=1}\sigma_{i}(t,X_{t})\,dB^{i}_{t}.\label{eq:ItoSDE}
\end{equation}
The mean-square derivative of $X_{t}$ with respect to its initial
condition satisfies the stochastic variational system 
\begin{equation}
\begin{aligned}dY_{t}={} & \frac{\partial f}{\partial x}(t,X_{t})Y_{t}dt+\sum^{m}_{i=1}\frac{\partial\sigma_{i}}{\partial x}(t,X_{t})Y_{t}dB^{i}_{t}.\end{aligned}
\label{eq:Ito_SDE_var}
\end{equation}
The main observation is that the standard path-integration argument
of deterministic contraction can be carried out in the Hilbert space
$L_{2}(\Omega)$: infinitesimal mean-square contraction is integrated
along a curve of initial conditions to obtain a finite mean-square
distance estimate. As a consequence, a differential Lyapunov inequality
of the form
\begin{equation}
\mathcal{A}V\le-2\lambda V\label{eq:cal A-cond}
\end{equation}
yields Wasserstein contraction, where $\mathcal{A}$ is the infinitesimal
generator for the augmented system \eqref{eq:ItoSDE} and \eqref{eq:Ito_SDE_var}
and $V$ the so called Finsler-Lyapunov function. Starting the two
systems from an optimal coupling of their initial laws and evolving
them with the same Brownian motion then turns this estimate into \eqref{eq:intro-W2}.
This directly extends the convential contraction analysis result for
deterministic systems.

This approach shares the same spirit of the so-called 
\emph{derivative-flow} methods \cite{li1992stochastic}, which have long played an important role in stochastic
analysis. In particular, Li \cite{li1992stochastic} studied the solution
flow together with its derivative with respect to the initial condition
and introduced moment-stability conditions based on exponential decay
of the derivative flow. Such conditions were used to study completeness,
semigroup regularity, and geometric and topological properties of the
underlying manifold. The present work adopts a related infinitesimal
viewpoint, but uses contraction of the stochastic variational dynamics
as a differential certificate for contraction of probability laws in
Wasserstein distance.

The same construction admits an intrinsic formulation on a Riemannian
manifold $(M,g)$. For the Stratonovich SDE 
\begin{equation}
dX_{t}=f_{0}(t,X_{t})\,dt+\sum^{m}_{i=1}f_{i}(t,X_{t})\circ dB^{i}_{t},\label{eq:intro-stratonovich}
\end{equation}
the mean square variational dynamics is obtained by complete lifting
(see \cite{Wu2021}) the vector fields $f_{i}$ to the tangent bundle
$TM$. The squared Riemannian length $g(v,v)$ then plays the role
of the Finsler-Lyapunov function. A particularly simple sufficient
condition that emerges from this construction is 
\begin{equation}
\mathcal{L}_{f_{0}}g+\frac{1}{2}\sum^{m}_{i=1}\mathcal{L}^{2}_{f_{i}}g\le-2\lambda g,\label{eq:intro-sync-manifold}
\end{equation}
which implies exponential $W_{2}$ contraction with respect to the
Riemannian distance where $\mathcal{L}_{X}g$ is the metric Lie derivative.
Condition \eqref{eq:intro-sync-manifold} is intrinsic and involves
only Lie derivatives of the metric. It may be viewed as the stochastic
counterpart of the familiar deterministic condition $\mathcal{L}_{f_{0}}g<0$, see e.g.,
\cite{simpson2014contraction}.

Condition \eqref{eq:intro-sync-manifold}, however, corresponds to
a particular choice of coupling, namely synchronous coupling, and
can be conservative. 
The source of this conservatism is
that the law of an Itô diffusion depends on the diffusion covariance
$\sigma\sigma^{\top}$, whereas a particular factorization $\sigma$
is not unique. Different relative rotations of the driving Brownian
motions therefore generate different admissible couplings without
changing the marginal laws. We exploit this freedom infinitesimally.
If $\Omega_{x}(v)\in\mathfrak{so}(m)$ (here $\mathfrak{so}(m)$ represents the set of skew-symmetric matrices of
dimension $m\times m$) denotes the infinitesimal relative
rotation of the noises associated with a tangent displacement $v\in T_{x}M$,
then the differential variation of the diffusion field is modified
from $\nabla_{v}\sigma_{i}$ to 
\begin{equation}
\eta_{i}(v)=\nabla_{v}\sigma_{i}+[\sigma\Omega_{x}(v)]_{i}.\label{eq:intro-diff-coupling}
\end{equation}
We refer to this additional degree of freedom as \emph{orthogonal differential
coupling} (or orthogonal coupling when clear from the context).

Using the orthogonal differential coupling, the resulting manifold criterion
takes the form 
\begin{equation}
\begin{aligned}g(\nabla_{v}\bar{f},v)+\frac{1}{2}\sum^{m}_{i=1}\Big(-R(\sigma_{i},v,v,\sigma_{i})+|\eta_{i}|^{2}_{g}\Big)\le-\lambda|v|^{2}_{g},\end{aligned}
\label{eq:intro-general-manifold}
\end{equation}
where $\bar{f}=f+\frac{1}{2}\sum^{m}_{i=1}\nabla_{\sigma_{i}}\sigma_{i}$, and $R$ is the curvature tensor.
The coupling variable $\Omega$ can compensate
for changes of the diffusion frame which represent only a rotation
of the driving noise.

We see that curvature plays a crucial role for Wasserstein contraction on manifolds. This is most transparent
in the following special case.
When $\{\sigma_{i}\}$ is an orthonormal
frame, one can choose $\Omega$ so that $\eta_{i}=0$. Condition \eqref{eq:intro-general-manifold}
then reduces to 
\begin{equation}
g(\nabla_{v}\bar{f},v)-\frac{1}{2}{\rm Ric}(v,v)\le-\lambda|v|^{2}_{g},\label{eq:intro-ricci}
\end{equation}where ${\rm Ric}$ is the Ricci curvature.
The left hand side is the familiar Bakry-Émery/Ricci curvature
structure underlying Wasserstein contraction of diffusion semigroups
\cite{bakry1985diffusions,kuwada2010duality,vonrenesse2005transport}.
Thus the classical curvature criterion appears here as a special case
of a differential contraction condition equipped with an appropriate
infinitesimal coupling. But it is derived from a rather different
perspective. %

The \textbf{contributions} of the paper can be summarized as follows.
\begin{enumerate}
\item We develop a differential approach for Wasserstein contraction
and establish condition \eqref{eq:cal A-cond}. The main technique
is to introduce a mean-square variation of the stochastic system,
mimicking the conventional deterministic variational systems. The
Euclidean result allows general Finsler-Lyapunov function and, in
particular, time- and state-dependent contraction metrics.
\item We formulate the construction intrinsically on Riemannian manifolds
using stochastic prolongation to the tangent bundle. This gives the
Lie-derivative criterion \eqref{eq:intro-sync-manifold}, providing
a direct stochastic analogue of deterministic Rieman\-nian contraction.
\item We introduce orthogonal differential coupling as an additional degree
of freedom and derive the curvature-dependent condition \eqref{eq:intro-general-manifold}.
The construction explains the conservatism of synchronous coupling,
recovers the Ricci-curvature criterion in the orthonormal-frame case,
and yields sharp contraction certifica\-tes for representative stochastic systems
on manifolds such as the sphere, $SO(n)$ and the space of positive definite matrices.
\item We illustrate the control implications through noise-assisted stabilization
on a compact manifold, where a smooth deterministic feedback is prevented
from global stabilizatizing an equilibrium. By adding a suitably shaped
state-dependent diffusion noise, the resulting closed-loop system
becomes globally exponentially contracting in $W_{2}$ and possesses
a unique invariant distribution that can be made arbitrarily concentrated
around the desired target while retaining a strictly positive contraction
rate. 
\end{enumerate}
The presentation follows a bottom-up route. We first develop the Euclidean
theory, where the mean-square variational arguments are most transparent.
We then employ the construction to Riemannian manifolds and obtain the
intrinsic synchronous criterion \eqref{eq:intro-sync-manifold}. The
limitations of synchronous coupling motivate the subsequent differential-coupling
theory, first through its infinitesimal interpretation and then in
its intrinsic manifold form. Finally, the geometric results are applied
to noise-assisted stabilization on the 2-sphere.

\section{Problem Formulation}

\label{sec:problem}

We first formulate the contraction problem in Euclidean space, where
we allow general differential Lyapunov functions. The intrinsic Riemannian
formulation will be introduced later as a geometric specialization.

Consider the time-varying Itô stochastic differential equation \eqref{eq:ItoSDE}
with $X_{t}\in\mathbb{R}^{n}$, where $B=(B^{1},\ldots,B^{m})$ is
a standard $m$-dimensional Brownian motion. For an initial condition
$X_{0}=x$, we denote the solution at time $t\ge0$ by $X^{x}_{t}$
and its law by $\mu^{x}_{t}$. By slightly abuse of notation, for
an initial probability measure $\mu_{0}$, denote the law of $X_{t}$
as $\mu_{t}$. 

For $p\ge1$, let $\mathcal{P}_{p}(\mathbb{R}^{n})$ denote the set
of Borel probability measures with finite $p$th moment. The $p$-Wasserstein
distance is 
\begin{equation}
W^{p}_{p}(\mu,\nu):=\inf_{\pi\in\Pi(\mu,\nu)}\int_{\mathbb{R}^{n}\times\mathbb{R}^{n}}|x-y|^{p}d\pi(x,y),\label{eq:Wp-def}
\end{equation}
where $\Pi(\mu,\nu)$ denotes the set of couplings of $\mu$ and $\nu$,
see for example \cite{villani2021topics}.

Our basic objective is to establish exponential contraction inequality
\eqref{eq:intro-W2} for arbitrary $\mu_{0},\nu_{0}\in\mathcal{P}_{p}(\mathbb{R}^{n})$,
with constants $K\ge1$ and $\lambda>0$ independent of the initial
measures. When $K=1$, the system is said to be $W_{p}$ contractive
with rate $\lambda$.

The approach taken in this paper is differential, inspired by contraction
analysis developed mainly for deterministic systems \cite{bulloBook2026}.
Along a solution $X_{t}$ of \eqref{eq:ItoSDE}, an infinitesimal
displacement $Y_{t}$ satisfies the stochastic variational equation
\eqref{eq:Ito_SDE_var}. We therefore consider a nonnegative function,
\begin{equation}
V:[0,\infty)\times\mathbb{R}^{n}\times\mathbb{R}^{n}\longrightarrow[0,\infty).\label{eq:problem-V}
\end{equation}
Typical assumptions will require, for some $p\ge1$, 
\begin{equation}
c_{1}|y|^{p}\le V(t,x,y)\le c_{2}|y|^{p}.\label{eq:problem-V-bounds}
\end{equation}
Such a function plays the role of a stochastic Finsler-Lyapunov
function \cite{forni2013differential}. 
In particular, this formulation
includes 
$p=1$, leading to $W_{1}$ theory, as well as the quadratic
case $p=2$, relevant to $W_{2}$.

The first question addressed in the paper is therefore the following.

\medskip{}
\noindent \textbf{Problem 1.} \emph{Find differential conditions
on $V(t,x,y)$ and on the coefficients of \eqref{eq:ItoSDE} which
imply contraction of the induced probability laws in $W_{p}$, and
in particular in $W_{1}$ and $W_{2}$.} \medskip{}

A particularly important specialization is the quadratic form 
\begin{equation}
V(t,x,y)=y^{\top}P(t,x)y,\quad P(t,x)=P(t,x)^{\top}\succ0.\label{eq:problem-quadratic-V}
\end{equation}
This is the stochastic analogue of a Riemannian contraction metric.
It is this quadratic structure that admits a natural intrinsic extension
to nonlinear state spaces.
Indeed, in the second part of the paper we consider a connected Riemannian
manifold $(M,g)$ and a Stratonovich diffusion \eqref{eq:intro-stratonovich}
with $X_{t}\in M$. Let $d_{g}$ denote the Riemannian distance and
define 
\begin{equation}
W^{2}_{2}(\mu,\nu):=\inf_{\pi\in\Pi(\mu,\nu)}\int_{M\times M}d^{2}_{g}(x,y)d\pi(x,y).\label{eq:W2g-def}
\end{equation}
The differential Lyapunov function is then fixed by the geometry:
\begin{equation}
V(x,v)=g_{x}(v,v),\quad v\in T_{x}M.\label{eq:problem-riemannian-V}
\end{equation}

\medskip{}
\noindent \textbf{Problem 2.} \emph{For the stochastic system \eqref{eq:intro-stratonovich} on Riemannian
	manifolds, characterize differential conditions
on the drift, diffusion vector fields, metric, and possibly the coupling
of the driving noises, which guarantee $W_{2}$ contraction.} 

\medskip{}
Since Problem 1 provides the foundation for Problem 2, we first address Problem 1 in the next section.

\section{Wasserstein Contraction in Euclidean Spaces }\label{sec:main}

We first develop the differential contraction theory in Euclidean
space. The construction follows the same local-to-global principle
as deterministic contraction analysis: an infinitesimal displacement
is propagated by a variational system, and its decay is integrated
along a curve of initial conditions to obtain a finite-distance estimate.
The essential difference is that regular differentiability is lost
in the stochastic case. We therefore need a weaker notion of derivative
called \emph{mean-square derivative}. This notion is not new and well-known
in stochastic systems, see for example, \cite[Chapter 7]{arnold1974stochastic},
or \cite[Chapter V]{Pro1992}. 

Throughout this section, we consider the Itô diffusion \eqref{eq:ItoSDE}
introduced in Section~\ref{sec:problem}.

\subsection{Mean-square variational dynamics}

There seems to be no consensus on what minimal regularity assumptions
should be imposed to guarantee existence of mean square derivative
with respect to initial conditions. The following set of conditions
will suffice our needs.

\begin{assumption} \label{ass:regularity} For every $T>0$: 
\begin{enumerate}
\item $f,\sigma_{i},\frac{\partial f}{\partial x},\frac{\partial\sigma_{i}}{\partial x}$
are jointly measurable in $(t,x)$, and $f(t,\cdot),\sigma_{i}(t,\cdot)$
are $C^{1}$ for almost every $t$; 
\item 
\begin{equation}
\int^{T}_{0}\left(|f(t,0)|^{2}+\sum^{m}_{i=1}|\sigma_{i}(t,0)|^{2}\right)dt<\infty;
\end{equation}
\item there exists $L_{T}>0$ such that 
\begin{equation}
\sup_{\substack{t\in[0,T]\\
x\in\R^{n}
}
}\left(\left\Vert \frac{\partial f(t,x)}{\partial x}\right\Vert +\sum^{m}_{i=1}\left\Vert \frac{\partial\sigma_{i}(t,x)}{\partial x}\right\Vert \right)\le L_{T};\label{eq:derivative-bound}
\end{equation}
\item $\frac{\partial f}{\partial x}$ and $\frac{\partial\sigma_{i}}{\partial x}$
are uniformly continuous in $x$, uniformly for $t\in[0,T]$. 
\end{enumerate}
\end{assumption}

In particular, the coefficients are globally Lipschitz on every finite
time interval, and hence \eqref{eq:ItoSDE} admits a unique nonexplosive
strong solution.
\begin{defn}
The solution $X^{x}_{t}$ of the SDE \eqref{eq:ItoSDE} is \emph{mean-square
differentiable} with respect to $x$ in the direction $y\in\R^{n}$
if there exists $Y^{x,y}_{t}\in L^{2}(\Omega;\R^{n})$ such that 
\begin{equation}
\lim_{h\to0}\E\left|\frac{X^{x+hy}_{t}-X^{x}_{t}}{h}-Y^{x,y}_{t}\right|^{2}=0.\label{eq:mean square der}
\end{equation}
\end{defn}

The derivative is governed by the stochastic analogue of the usual
variational equation.
\begin{prop}
\label{prop:msdiff} Under Assumption~\ref{ass:regularity}, for
every $T>0$ and $x,y\in\R^{n}$ there exists an adapted process $Y^{x,y}$
such that 
\begin{equation}
\E\sup_{0\le t\le T}|Y^{x,y}_{t}|^{2}<\infty
\end{equation}
and 
\begin{equation}
\lim_{h\to0}\E\sup_{0\le t\le T}\left|\frac{X^{x+hy}_{t}-X^{x}_{t}}{h}-Y^{x,y}_{t}\right|^{2}=0.\label{eq:msdiff}
\end{equation}
Moreover, $Y^{x,y}$ is the unique solution of \eqref{eq:Ito_SDE_var}
with initial condition $Y^{x,y}_{0}=y$. The map $(x,y)\mapsto Y^{x,y}$
is continuous from $\R^{n}\times\R^{n}$ into $L_{2}(\Omega;C([0,T];\R^{n}))$. 
\end{prop}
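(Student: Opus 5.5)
The plan is to first settle existence and uniqueness for the variational system, then control the difference quotient by a linear SDE with a vanishing forcing term.

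\textbf{Step 1: the variational system.} Fix $x,y$. Equation \eqref{eq:Ito_SDE_var} is a linear SDE in $Y$ with coefficients $A(t)=\frac{\partial f}{\partial x}(t,X^x_t)$ and $B_i(t)=\frac{\partial\sigma_i}{\partial x}(t,X^x_t)$. These are progressively measurable, by joint measurability and the adaptedness of $X^x$, and bounded by $L_T$ from \eqref{eq:derivative-bound}. Standard Lipschitz SDE theory therefore gives a unique adapted solution with $Y_0=y$. The Burkholder--Davis--Gundy (BDG) inequality and Gronwall's lemma then give $\E\sup_{t\le T}|Y_t|^2\le C_T|y|^2$.

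\textbf{Step 2: the difference quotient.} Set $Z^h_t=(X^{x+hy}_t-X^x_t)/h$ and use the mean value theorem in integral form. This gives
\[
dZ^h_t=A^h(t)Z^h_t\,dt+\sum_i B^h_i(t)Z^h_t\,dB^i_t,\qquad A^h(t)=\int_0^1\frac{\partial f}{\partial x}\bigl(t,X^x_t+s(X^{x+hy}_t-X^x_t)\bigr)ds,
\]
with $B^h_i$ defined in the same way. The same Gronwall/BDG argument yields $\E\sup_{t\le T}|Z^h_t|^2\le C_T|y|^2$ uniformly in $h$. It also gives the standard Lipschitz estimate $\E\sup_{t\le T}|X^{x+hy}_t-X^x_t|^2\le C_T h^2|y|^2$.

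\textbf{Step 3: estimate for $D^h=Z^h-Y$.} Write
\[
dD^h=\bigl(A^hD^h+(A^h-A)Y\bigr)dt+\sum_i\bigl(B^h_iD^h+(B^h_i-B_i)Y\bigr)dB^i,\qquad D^h_0=0.
\]
BDG and Gronwall give
\[
\E\sup_{t\le T}|D^h_t|^2\le C_T\,\E\int_0^T\Bigl(|A^h-A|^2+\sum_i|B^h_i-B_i|^2\Bigr)|Y_t|^2dt.
\]
The main obstacle is showing that this right-hand side tends to zero using only uniform continuity, not Lipschitz continuity, of the Jacobians. I would take $\omega_T$ to be a common modulus of continuity for the Jacobians on $[0,T]$. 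Then $|A^h-A|\le\omega_T(|X^{x+hy}_t-X^x_t|)$, and this quantity is also bounded by $2L_T$. Since $\sup_t|X^{x+hy}_t-X^x_t|\to0$ in probability, the integrand tends to zero in $dt\otimes d\mathbb P$-measure. It is dominated by $4L_T^2|Y_t|^2$, which is integrable by Step 1. Dominated convergence in the form valid for convergence in measure then gives \eqref{eq:msdiff}.

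\textbf{Step 4: continuity of $(x,y)\mapsto Y^{x,y}$.} Linearity in $y$ gives $\E\sup|Y^{x,y}-Y^{x,y'}|^2\le C_T|y-y'|^2$, so it suffices to handle $x$. For $x_k\to x$, the difference $Y^{x_k,y}-Y^{x,y}$ satisfies the same type of linear SDE, with forcing $(A^{x_k}-A^x)Y^{x,y}$ and the analogous diffusion terms. Since $\E\sup|X^{x_k}-X^x|^2\le C_T|x_k-x|^2$, the argument of Step 3 (modulus of continuity plus dominated convergence) gives convergence in $L_2(\Omega;C([0,T];\R^n))$. Joint continuity follows by combining this with the uniform-in-$x$ linear bound in $y$.
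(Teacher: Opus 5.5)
Your proposal is correct and follows essentially the same route as the paper. You write the difference quotient via the integral mean value theorem, derive a linear SDE for $Z^h-Y$ with forcing $(A^h-A)Y$ and $(B^h_i-B_i)Y$, apply BDG and Gr\"onwall, and use $\E\sup_{t\le T}|X^{x+hy}_t-X^x_t|^2\le C_Th^2|y|^2$ to make the forcing vanish. The only differences are cosmetic: you dominate the integrand explicitly by a multiple of $L_T^2|Y_t|^2$ using a common modulus of continuity, where the paper invokes uniform integrability and Vitali's theorem, and you spell out the joint continuity in $(x,y)$ that the paper only sketches.
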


The proof is given in Appendix~\ref{app:msdiff}. It uses difference
quotients together with standard $L_{2}$ stability estimates for
stochastic differential equations. Proposition~\ref{prop:msdiff}
provides the stochastic counterpart of the classical variational system
used in determini\-stic contraction analysis.

For convenience, we write \eqref{eq:ItoSDE} and \eqref{eq:Ito_SDE_var}
together
\begin{equation}
\begin{aligned}dX_{t} & =f(t,X_{t})dt+\sum^{m}_{i=1}\sigma_{i}(t,X_{t})dB^{i}_{t},\\
dY_{t} & =\frac{\partial f}{\partial x}(t,X_{t})Y_{t}dt+\sum^{m}_{i=1}\frac{\partial\sigma_{i}}{\partial x}(t,X_{i})Y_{t}dB^{i}_{t},
\end{aligned}
\label{eq:lifted}
\end{equation}
and call this the \emph{mean-square variational system}.

The mean-square variational system describes only infinitesimal perturbations.
Nevertheless, path integration estimation as in deterministic contraction
theory holds in the $L_{2}$ sense, which in turn leads to incremental
estimations. %

\begin{lem}
\label{lem:FTC} Let $\gamma\in C^{1}([0,1];\R^{n})$. Under Assumption~\ref{ass:regularity},
\begin{equation}
X^{\gamma(1)}_{t}-X^{\gamma(0)}_{t}=\int^{1}_{0}Y^{\gamma(s),\gamma'(s)}_{t}\,ds\label{eq:L2FTC}
\end{equation}
holds in $L^{2}(\Omega;\R^{n})$. Consequently, 
\begin{equation}
\E|X^{\gamma(1)}_{t}-X^{\gamma(0)}_{t}|^{2}\le\int^{1}_{0}\E|Y^{\gamma(s),\gamma'(s)}_{t}|^{2}\,ds.\label{eq:L2FTC-Jensen}
\end{equation}
\end{lem}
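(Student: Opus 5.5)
The plan is to treat the map $\Phi:[0,1]\to L^{2}(\Omega;\R^{n})$, $\Phi(s):=X^{\gamma(s)}_{t}$, as a Hilbert-space-valued curve and apply the fundamental theorem of calculus in $L^{2}(\Omega;\R^{n})$. First I will show $\Phi$ is differentiable in $L^{2}$ with derivative $\Phi'(s)=Y^{\gamma(s),\gamma'(s)}_{t}$. Then I will show $s\mapsto\Phi'(s)$ is continuous into $L^{2}$. With these two facts, $\Phi$ is $C^{1}$ as a Banach-space-valued map. The Bochner (equivalently Riemann) integral $\int_{0}^{1}\Phi'(s)\,ds$ then exists and equals $\Phi(1)-\Phi(0)$. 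To see this, apply the scalar FTC to $s\mapsto\langle\Phi(s),\xi\rangle_{L^{2}}$ for each test element $\xi\in L^{2}$, and then use that the dual separates points.

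The delicate step is differentiability. Proposition~\ref{prop:msdiff} only gives directional derivatives along straight lines $x+hy$, whereas here we need $\gamma(s+h)$ in place of $\gamma(s)+h\gamma'(s)$. I would write
\[
\frac{X^{\gamma(s+h)}_{t}-X^{\gamma(s)}_{t}}{h}=\frac{X^{\gamma(s+h)}_{t}-X^{\gamma(s)+h\gamma'(s)}_{t}}{h}+\frac{X^{\gamma(s)+h\gamma'(s)}_{t}-X^{\gamma(s)}_{t}}{h}.
\]
The second term converges in $L^{2}$ to $Y^{\gamma(s),\gamma'(s)}_{t}$ by \eqref{eq:msdiff}. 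For the first term I will use the standard $L^{2}$-Lipschitz dependence on initial data, $\E|X^{a}_{t}-X^{b}_{t}|^{2}\le C_{T}|a-b|^{2}$. This follows from Assumption~\ref{ass:regularity}, since the coefficients are globally Lipschitz on $[0,T]$ (Itô isometry plus Gronwall). Because $\gamma\in C^{1}$, we have $|\gamma(s+h)-\gamma(s)-h\gamma'(s)|=o(h)$, so the first term tends to $0$ in $L^{2}$. One-sided derivatives at the endpoints $s=0,1$ are handled identically.

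Continuity of $s\mapsto Y^{\gamma(s),\gamma'(s)}_{t}$ in $L^{2}$ follows directly from the last statement of Proposition~\ref{prop:msdiff}: $(x,y)\mapsto Y^{x,y}$ is continuous into $L_{2}(\Omega;C([0,T];\R^{n}))$, and evaluation at time $t$ is a contraction into $L^{2}(\Omega;\R^{n})$. Composing with the continuous map $s\mapsto(\gamma(s),\gamma'(s))$ gives the required continuity. This makes the FTC step rigorous; a continuous $L^{2}$-valued integrand on $[0,1]$ is Riemann integrable.

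Finally, \eqref{eq:L2FTC-Jensen} follows from Minkowski's inequality for Bochner integrals, $\|\int_{0}^{1}\Phi'\,ds\|_{L^{2}}\le\int_{0}^{1}\|\Phi'\|_{L^{2}}ds$, followed by Cauchy--Schwarz (Jensen) on $[0,1]$: $(\int_{0}^{1}\|\Phi'\|\,ds)^{2}\le\int_{0}^{1}\|\Phi'\|^{2}ds$. Measurability of $s\mapsto\E|Y^{\gamma(s),\gamma'(s)}_{t}|^{2}$ is automatic from the continuity shown above. I expect the main obstacle to be the curve-versus-line discrepancy in the differentiability step, which is handled by the Lipschitz estimate as described.
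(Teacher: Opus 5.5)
Your proposal is correct and follows the paper's argument: you view $s\mapsto X^{\gamma(s)}_t$ as a $C^1$ curve in $L^2(\Omega;\R^n)$, apply the Banach-space fundamental theorem of calculus, and finish with Minkowski and Jensen. The one difference is in how differentiability along the curve is obtained. The paper asserts continuous Fr\'echet differentiability of $x\mapsto X^x_t$ and applies the chain rule, whereas you check the derivative directly by splitting off the straight-line difference quotient and bounding the $o(h)$ remainder with the $L^2$-Lipschitz estimate, which makes that step explicit.
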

\begin{proof}
Proposition~\ref{prop:msdiff} implies that $x\mapsto X^{x}_{t}$
is continuously Fréchet differentiable as a map from $\R^{n}$ into
$L^{2}(\Omega;\R^{n})$, with directional derivative $Y^{x,y}_{t}$.
Hence $s\mapsto X^{\gamma(s)}_{t}$ is continuously differentiable
and 
\begin{equation}
\frac{d}{ds}X^{\gamma(s)}_{t}=Y^{\gamma(s),\gamma'(s)}_{t}\quad\text{in }L^{2}.
\end{equation}
The Banach-space fundamental theorem of calculus (see e.g., \cite[Chapter XIII]{lang1993real})
gives \eqref{eq:L2FTC}; Jensen's inequality gives \eqref{eq:L2FTC-Jensen}.
\end{proof}

Let $\mathcal{A}$ denote the infinitesimal generator of the variational
system \eqref{eq:lifted}. For 
\[
V\in C^{1,2,2}\bigl([0,\infty)\times\R^{n}\times(\R^{n}\setminus\{0\})\bigr),
\]
we have 
\begin{equation}
\begin{aligned}\mathcal{A}V={} & \partial_{t}V+(\nabla_{x}V)^{\top}f+(\nabla_{y}V)^{\top}\frac{\partial f}{\partial x}y\\
 & +\frac{1}{2}\sum^{m}_{i=1}\begin{bmatrix}\sigma_{i}\\
\frac{\partial\sigma_{i}}{\partial x}y
\end{bmatrix}^{\!\top}D^{2}_{(x,y)}V\begin{bmatrix}\sigma_{i}\\
\frac{\partial\sigma_{i}}{\partial x}y
\end{bmatrix},
\end{aligned}
\label{eq:lifted-generator}
\end{equation}
where all coefficients are evaluated at $(t,x)$.

\subsection{Wasserstein Contraction in Euclidean Spaces}

Let $\mu_{t}$ and $\nu_{t}$ denote the laws at time $t$ for the
system \eqref{eq:ItoSDE} generated from two initial probability measures
$\mu_{0}$ and $\nu_{0}$. When $\mu_{0}$ is a Dirac measure, say
$\mu_{0}=\delta_{x_{0}}$, to highlight the dependence on the initial
condition, we shall denote $\mu_{t}$ as $\mu^{x_{0}}_{t}$.

\begin{defn}[$W_{p}$ contraction]
The system is said to be Wasserstein contracting (or $W_{p}$ contracting
in short) if there exist two positive constants $K$, $\lambda$ such
that, for any $X_{0}\sim\mu_{0}$, $Y_{0}\sim\nu_{0}$, and $X_{t}\sim\mu_{t}$,
$Y_{t}\sim\nu_{t}$, there holds
\begin{equation}
W_{p}(\mu_{t},\nu_{t})\le Ke^{-\lambda t}W_{p}(\mu_{0},\nu_{0}),\quad t\ge0.\label{eq:w2-goal}
\end{equation}
\end{defn}

The following result provides a sufficient condition for Wasserstein contraction in Euclidean spaces.  
\begin{thm}
\label{thm:main} Suppose Assumption~\ref{ass:regularity} holds
and let $1\le p\le2$. Assume that there exist a function $V\in C([0,\infty)\times\R^{n}\times\R^{n})\cap C^{1,2,2}([0,\infty)\times\R^{n}\times(\R^{n}\backslash\{0\}))$
and constants $c_{1},c_{2},\lambda>0$ such that 
\begin{equation}
c_{1}|y|^{p}\le V(t,x,y)\le c_{2}|y|^{p}\label{eq:V-bounds}
\end{equation}
and 
\begin{equation}
\mathcal{A}V(t,x,y)\le-\lambda V(t,x,y)\label{eq:V-decay}
\end{equation}
for all $t\ge0$ and $x\in\R^{n}$, $y\in\mathbb{R}^{n}\backslash\{0\}$.
Then the system \eqref{eq:ItoSDE} is $W_{p}$ contracting.
\end{thm}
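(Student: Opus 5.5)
Fix $t\ge0$, two initial points $x_0,x_1$, and the segment $\gamma(s)=x_0+s(x_1-x_0)$. The plan has three steps: obtain a pointwise decay estimate for the variational process, integrate it along $\gamma$ to bound $\E|X^{x_1}_t-X^{x_0}_t|^p$, and then lift this to arbitrary initial laws through an optimal coupling and a single Brownian motion.

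\textbf{Step 1 (differential decay).} For each $s$, apply Itô's formula to $e^{\lambda t}V(t,X_t,Y_t)$ along the mean-square variational system \eqref{eq:lifted} started at $(\gamma(s),\gamma'(s))$. By \eqref{eq:V-decay} the drift is nonpositive, so, after localization and Fatou's lemma,
\[
\E V(t,X^{\gamma(s)}_t,Y^{\gamma(s),\gamma'(s)}_t)\le e^{-\lambda t}V(0,\gamma(s),\gamma'(s)).
\]
Combining this with \eqref{eq:V-bounds} gives
\[
\E|Y_t|^p\le (c_2/c_1)e^{-\lambda t}|x_1-x_0|^p .
\]
A technical point is that $V$ is $C^{2}$ only off $y=0$. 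Since $Y$ is linear in its initial condition, and by uniqueness in Proposition~\ref{prop:msdiff}, the process never reaches $0$ when $y\neq0$. I would still handle this with care: stop the process at $\tau_\epsilon=\inf\{t:|Y_t|\le\epsilon \text{ or } |Y_t|+|X_t|\ge 1/\epsilon\}$. Alternatively, one can show that the linear SDE for $Y$ a.s. never hits zero, using the Lipschitz bound \eqref{eq:derivative-bound} and Itô's formula for $\log|Y|^2$. For $y=0$ we have $Y\equiv0$ and the bound is trivial.

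\textbf{Step 2 (integrate along the curve).} Lemma~\ref{lem:FTC} gives $X^{x_1}_t-X^{x_0}_t=\int_0^1 Y^{\gamma(s),\gamma'(s)}_t\,ds$ in $L^2$, hence also in $L^p$ since $p\le2$. Minkowski's inequality for integrals in $L^p(\Omega)$ (equivalently, Jensen with $|\cdot|^p$ convex, $p\ge1$) then yields
\[
\E|X^{x_1}_t-X^{x_0}_t|^p\le\int_0^1\E|Y_t^{\gamma(s),\gamma'(s)}|^p\,ds\le \frac{c_2}{c_1}e^{-\lambda t}|x_1-x_0|^p .
\]
The measurability of $s\mapsto Y^{\gamma(s),\gamma'(s)}_t$ follows from the continuity in Proposition~\ref{prop:msdiff}. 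The restriction $p\le2$ is used exactly here, so that $L^2$ convergence implies $L^p$ convergence.

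\textbf{Step 3 (coupling).} Take an optimal coupling $\pi$ of $\mu_0,\nu_0$ for $W_p$ and an $\mathcal F_0$-measurable pair $(\xi,\zeta)\sim\pi$ independent of $B$. Drive both solutions with the same $B$. By strong uniqueness, $X^\xi_t$ equals the flow $X^x_t$ evaluated at $x=\xi$, so conditioning on $(\xi,\zeta)$ and applying Step 2 gives
\[
W_p^p(\mu_t,\nu_t)\le\E|X^\xi_t-X^\zeta_t|^p\le (c_2/c_1)e^{-\lambda t}W_p^p(\mu_0,\nu_0).
\]
This is the claim with $K=(c_2/c_1)^{1/p}$ and rate $\lambda/p$. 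The conditioning step requires a measurable version of the flow $(x,\omega)\mapsto X^x_t(\omega)$; I would take this from standard flow results, or from the continuity in $x$ in $L^2$ together with independence from $\mathcal F_0$.

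The main obstacle is Step 1. It requires justifying Itô's formula with a $V$ that may be singular at $y=0$ and has no a priori integrability of the local martingale term. The stopping-time and Fatou argument, together with non-vanishing of $Y$, is what I would try first.
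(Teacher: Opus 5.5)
Your proposal is correct and follows essentially the same route as the paper: localized Dynkin/Fatou on the variational system, the $L^2$ fundamental theorem of calculus along the segment (Lemma~\ref{lem:FTC}), and an optimal initial coupling driven by a common Brownian motion, with $p\le 2$ entering only through $L^2\subset L^p$. The differences are minor. You treat general $p$ directly via Minkowski/Jensen in $L^p$ and track the constants more carefully ($K=(c_2/c_1)^{1/p}$, rate $\lambda/p$). For the non-vanishing of $Y$, the paper uses a.s.\ invertibility of the fundamental matrix $\Phi_t$; your $\log|Y|^2$ argument works equally well, whereas uniqueness alone does not show that $Y$ avoids $0$.
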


Conditions \eqref{eq:V-bounds} and \eqref{eq:V-decay} have
already appeared in \cite{Koo2026} in the context of $L_p$ contraction, where they were derived
using stochastic flow tools \cite{kunita1990stochastic}.
Here, we give a more self-contained derivation based on mean-square differentiation.
Since this approach will be used repeatedly throughout the manuscript, we include the complete proof below.

\begin{proof}
	The proof relies essentially on Dynkin's formula and Lemma \ref{lem:FTC}.

\emph{Step 1.} We treat the $p=2$ case first. If $V$ is $C^{2}$
on the whole of $[0,\infty)\times\mathbb{R}^{n}\times\mathbb{R}^{n}$,
then Dynkin's formula (see for example \cite[Chapter II]{Kushner1967})
together with the inquality \eqref{eq:V-decay} imply
\begin{equation}
\mathbb{E}V(t,X_{t},Y_{t})\le e^{-\lambda t}\mathbb{E}V(0,X_{0},Y_{0}).\label{eq:ineq-V}
\end{equation}
However, in our case, $V(\cdot,\cdot,y)$ is only continuous at $y=0$.
This can be easily handled through the following locolization procedure. 

Let $Y_{0}\neq0$. The variational equation (second line of \eqref{eq:lifted})
with initial condition $Y_{0}$ has the representation $Y_{t}=\Phi_{t}Y_{0}$,
where $\Phi_{t}$ is the its fundamental matrix which is almost surely
invertible. Consequently, $Y_{t}\neq0$ for all $t\ge0$ almost surely.

For $k\ge1$, define the stopping time 
\[
\tau_{k}:=\inf\left\{ t\ge0:|Y_{t}|\le k^{-1}\ \text{or}\ |X_{t}|+|Y_{t}|\ge k\right\} .
\]
On $[0,\tau_{k}]$, the process remains in a compact subset of $\R^{n}\times(\R^{n}\setminus\{0\})$,
where $V$ is $C^{1,2,2}$. Dynkin's formula applied to $e^{\lambda(t\wedge\tau_{k})}V(t\wedge\tau_{k},X_{t\wedge\tau_{k}},Y_{t\wedge\tau_{k}})$
therefore gives 
\[
\E\!\left[e^{\lambda(t\wedge\tau_{k})}V(t\wedge\tau_{k},X_{t\wedge\tau_{k}},Y_{t\wedge\tau_{k}})\right]\le V(0,x,y).
\]
Since $\tau_{k}\uparrow\infty$ almost surely as $k\to\infty$, Fatou's
lemma yields \eqref{eq:ineq-V}. 

For $Y_{0}=0$, the same estimate is trivial since $Y^{x,0}_{t}\equiv0$.
Using \eqref{eq:V-bounds}, we then arrive at
\begin{equation}
\E|Y^{x,y}_{t}|^{2}\le\frac{c_{2}}{c_{1}}e^{-\lambda t}|y|^{2}\label{eq:tangent-contraction}
\end{equation}
for every $t\ge0$, $x\in\mathbb{R}^{n}$, $y\in\mathbb{R}^{n}$.

Now for any two deterministic initial points $x_{1},x_{2}$, take
the segment $\gamma(s)=(1-s)x_{1}+sx_{2}$. Lemma~\ref{lem:FTC}
gives 
\begin{equation}
\E|X^{x_{2}}_{t}-X^{x_{1}}_{t}|^{2}\le\frac{c_{2}}{c_{1}}e^{-2\lambda t}|x_{2}-x_{1}|^{2}.\label{eq:point-contraction}
\end{equation}

\emph{Step 2}. Choose an optimal initial coupling $\pi_{0}\in\Piopt(\mu_{0},\nu_{0})$,
sample $(X_{0},\widetilde{X}_{0})\sim\pi_{0}$ independently of $B$,
and evolve the two states with the same Brownian motion. Conditioning
on $(X_{0},\widetilde{X}_{0})$ and using \eqref{eq:point-contraction},
\begin{align}
\E|X_{t}-\widetilde{X}_{t}|^{2} & \le\frac{c_{2}}{c_{1}}e^{-2\lambda t}\E|X_{0}-\widetilde{X}_{0}|^{2}\nonumber \\
 & =\frac{c_{2}}{c_{1}}e^{-2\lambda t}W^{2}_{2}(\mu_{0},\nu_{0}).
\end{align}
The law of $(X_{t},\widetilde{X}_{t})$ is an admissible coupling
of $\mu_{t}$ and $\nu_{t}$, so $W^{2}_{2}(\mu_{t},\nu_{t})\le\E|X_{t}-\widetilde{X}_{t}|^{2}$.
Taking square roots proves \eqref{eq:w2-goal}. 

\emph{Step 3}. It remains consider $1\le p<2$. Since the underlying
measure is a probability measure, $L_{2}\subset L_{p}$ and $\|f\|_{p}\le\|f\|_{2}$.
Hence mean-square differentiability implies $L_{p}$-differentiability
for every $1\le p<2$, and the proof for $p=2$ carries over verbatim.

\end{proof}

\begin{rem}
Note carefully that the function $V(\cdot,\cdot,y)$ is required to
be $C^{2}$ for the third variable only on $\mathbb{R}^{n}\backslash\{0\}$. 
\end{rem}

For linear systems, there is a simple characterization for $W_2$ contraction.
Consider
\begin{equation}
dX_{t}=AX_{t}dt+\sum^{m}_{i=1}C_{i}X_{t}dB^{i}_{t},\label{eq:GBM}
\end{equation}
where $A,C_{i}\in\mathbb{R}^{n\times n}$ are some constant matrices.
We have the following result.
\begin{cor}
The system \eqref{eq:GBM} is $W_{2}$ contracting if and only if
there exist $P\succ0$ and $Q\succ0$, such that 
\begin{equation}
A^{\top}P+PA+\sum^{m}_{i=1}C^{\top}_{i}PC_{i}\preceq-Q.\label{eq:L-ineq}
\end{equation}
\end{cor}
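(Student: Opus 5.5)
Sufficiency follows from Theorem~\ref{thm:main} with $p=2$; necessity uses the linear structure: the variational system coincides with the system itself, so the two-point difference is again a solution of \eqref{eq:GBM}.

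\emph{Sufficiency.} Take $V(x,y)=y^{\top}Py$. Here $\partial f/\partial x=A$ and $\partial\sigma_i/\partial x=C_i$ do not depend on $x$, so $V$ has no $x$-dependence and \eqref{eq:lifted-generator} gives
\[
\mathcal{A}V=y^{\top}\Bigl(A^{\top}P+PA+\sum_{i=1}^m C_i^{\top}PC_i\Bigr)y\le -y^{\top}Qy\le-\lambda\, y^{\top}Py,
\]
with $\lambda=\lambda_{\min}(Q)/\lambda_{\max}(P)>0$. The bounds \eqref{eq:V-bounds} hold with $c_1=\lambda_{\min}(P)$ and $c_2=\lambda_{\max}(P)$, and Assumption~\ref{ass:regularity} clearly holds. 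Theorem~\ref{thm:main} then gives $W_2$ contraction.

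\emph{Necessity.} Suppose $W_2(\mu_t,\nu_t)\le Ke^{-\lambda t}W_2(\mu_0,\nu_0)$. Take $\mu_0=\delta_x$ and $\nu_0=\delta_0$. Then $\nu_t=\delta_0$ because $X\equiv0$ is a solution, and $W_2(\mu^x_t,\delta_0)^2=\E|X^x_t|^2$, since the only coupling with a Dirac mass is the product coupling. Hence
\[
\E|X^x_t|^2\le K^2e^{-2\lambda t}|x|^2,
\]
which is exponential mean-square stability of \eqref{eq:GBM}. The second moment $S_t=\E[X_tX_t^{\top}]$ satisfies the linear ODE $\dot S=\mathcal{L}(S):=AS+SA^{\top}+\sum_i C_iSC_i^{\top}$. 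Taking the initial moments $S_0=xx^{\top}$, whose span is all symmetric matrices, shows that $e^{t\mathcal{L}}\to0$ exponentially on the space of symmetric matrices.

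Now define $P:=\int_0^\infty e^{t\mathcal{L}^*}(I)\,dt$, where the adjoint is $\mathcal{L}^*(P)=A^{\top}P+PA+\sum_i C_i^{\top}PC_i$. Equivalently,
\[
y^{\top}Py=\int_0^\infty \E|Y^{y}_t|^2\,dt ,
\]
so $P$ is well defined. It is positive definite: $\E|Y_t^y|^2\ge|y|^2e^{-ct}$ for some $c$ by an Itô lower bound, so $y^{\top}Py\ge|y|^2/c>0$. Integrating $\tfrac{d}{dt}e^{t\mathcal{L}^*}(I)$ from $0$ to $\infty$ gives $\mathcal{L}^*(P)=-I$, which is \eqref{eq:L-ineq} with $Q=I$.

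\emph{Main obstacle.} The key step is identifying $\mathcal{L}^*$ and checking that the duality $\mathrm{tr}(P\,\mathcal{L}(S))=\mathrm{tr}(\mathcal{L}^*(P)\,S)$ holds, so that mean-square stability of the forward moment equation transfers to the adjoint Lyapunov operator. A related point is that convergence of the integral defining $P$ needs exponential decay of $e^{t\mathcal{L}^*}$. This holds because $\mathcal{L}$ and $\mathcal{L}^*$ have the same spectrum, and all eigenvalues of $\mathcal{L}$ have negative real part by the stability just shown.
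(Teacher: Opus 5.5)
Your proof is correct. The sufficiency argument is the same as the paper's. So is the reduction of necessity to mean-square exponential stability: you take $\nu_0=\delta_0$, use $\nu_t=\delta_0$, and use $W_2(\mu^x_t,\delta_0)^2=\E|X^x_t|^2$.

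The difference lies in the last step. The paper argues by contradiction and cites the classical equivalence between mean-square exponential stability of \eqref{eq:GBM} and solvability of \eqref{eq:L-ineq} (Arnold, Corollary 11.4.14). You instead prove that converse-Lyapunov step yourself:
\begin{itemize}
\item you pass to the second-moment equation $\dot S=\mathcal{L}(S)$;
\item you transfer the decay to $\mathcal{L}^*$ through the trace duality;
\item you construct $P=\int_0^\infty e^{t\mathcal{L}^*}(I)\,dt$, equivalently $y^\top Py=\int_0^\infty\E|X^y_t|^2\,dt$.
\end{itemize}
This yields the slightly sharper conclusion $\mathcal{L}^*(P)=-I$, i.e.\ \eqref{eq:L-ineq} with $Q=I$ and equality. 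Your route is self-contained and constructive but takes a few more lines. The paper's is shorter but delegates the substantive part of necessity to the literature.

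Two small simplifications are available. You do not need the spectral argument for convergence of the integral. Since $e^{t\mathcal{L}^*}(I)$ is symmetric and satisfies $y^\top e^{t\mathcal{L}^*}(I)\,y=\E|X^y_t|^2\le K^2e^{-2\lambda t}|y|^2$, it is positive semidefinite with operator norm at most $K^2e^{-2\lambda t}$. Likewise, positivity of $P$ follows from continuity of $t\mapsto\E|X^y_t|^2$ at $t=0$, where it equals $|y|^2>0$, so no It\^o lower bound is needed.
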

\begin{proof}
Sufficiency follows from Theorem \ref{thm:main} by taking
$V(t,x,y)=y^{\top}Py$. 
We prove necessity by contradiction. It is well-known that the system
\eqref{eq:GBM} is mean-square exponentially stable if and only if
the inequality holds for some positive definite matrices $P$ and
$Q$; see for example \cite[Corollary 11.4.14]{arnold1974stochastic}.
Suppose, to the contrary, that no such $P$ and $Q$ exist. Then the
system is not mean-square exponentially stable. Let $X_{0}=x_{0}$,
$X_{t}\sim\mu_{t}$ and choose $\nu_{0}=\delta_{0}$. Since the origin
is an equilibrium, $\nu_{t}=\delta_{0}$. Now $W_{2}$ contraction
implies
\[
\mathbb{E}|X_{t}|^{2}=W_{2}(\mu_{t},\delta_{0})^{2}\le Ce^{-2\lambda t}|x_{0}|^{2}.
\]
Since this holds for all $x_{0}\in\mathbb{R}^{n}$, this leads to
a contradiction.
\end{proof}

\subsection{Stochastic Matrix Measure and $W_{1}$ contraction}

Matrix measure (or matrix logarithmic norm) is widely used in analyzing
deterministic contraction \cite{Russo2010,sontag2010contractive}. 
Stochastic extensions of the matrix measure have been considered in
\cite{ahmad2008stochastic,aminzare2022stochastic}. 
Here, we define stochastic matrix measure as follows.

Let $A$ and $C,\cdots,C_{m}$ be some matrices in $\mathbb{R}^{n\times n}$.
Let $|\cdot|_{*}$ be some norm in $\mathbb{R}^{n}$ which is smooth
on $\mathbb{R}^{n}\backslash\{0\}$. Define 
\begin{equation} \label{eq:matrix-measure}
\begin{aligned}
\mu_{|\cdot|_{*}}(A; & C_{1},\cdots,C_{m})\\
= & \sup_{|y|_{*}=1}\left\{ y^{\top}A^{\top}\frac{\partial|y|_{*}}{\partial y}+\frac{1}{2}\sum^{m}_{i=1}y^{\top}C^{\top}_{i}\frac{\partial^{2}|y|_{*}}{\partial y^{2}}C_{i}y\right\} .
\end{aligned}
\end{equation}
The definition reduces to the conventional one by setting $C_{1}=\cdots=C_{m}=0$. 
Below we show that this stochastic matrix measure arises naturally by taking Finsler-Lyapunov function as
$V(t,x,y) = |y|_*$.

\begin{prop}
Let $|\cdot|_{*}$ be an Euclidean norm which is $C^{2}$ on $\mathbb{R}^{n}\backslash\{0\}$.
If there exists a constant $C>0$, such that
\[
\mu_{|\cdot|_{*}}\left(\frac{\partial f}{\partial x};\frac{\partial\sigma_{1}}{\partial x},\cdots,\frac{\partial\sigma_{m}}{\partial x}\right)(x)<-C,
\]
for all $x\in\mathbb{R}^{n}$, then the system \eqref{eq:ItoSDE}
is $W_{1,|\cdot|_{*}}$ contracting. 

\end{prop}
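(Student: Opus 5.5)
The plan is to apply Theorem~\ref{thm:main} with $p=1$ and the time- and state-independent Finsler-Lyapunov function $V(t,x,y)=|y|_{*}$. It then remains to check two things: the bounds \eqref{eq:V-bounds} and the decay inequality \eqref{eq:V-decay}, the latter via the lifted generator \eqref{eq:lifted-generator}.

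\emph{Step 1: bounds and regularity.} Since all norms on $\mathbb{R}^{n}$ are equivalent, there exist $c_{1},c_{2}>0$ with $c_{1}|y|\le|y|_{*}\le c_{2}|y|$, so \eqref{eq:V-bounds} holds with $p=1$. The function $V$ is continuous everywhere and, by hypothesis, $C^{2}$ on $\mathbb{R}^{n}\setminus\{0\}$. It does not depend on $t$ or $x$, so $V\in C([0,\infty)\times\mathbb{R}^{n}\times\mathbb{R}^{n})\cap C^{1,2,2}([0,\infty)\times\mathbb{R}^{n}\times(\mathbb{R}^{n}\setminus\{0\}))$, which is exactly the regularity Theorem~\ref{thm:main} requires.

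\emph{Step 2: computing $\mathcal{A}V$.} Because $\partial_{t}V=0$ and $\nabla_{x}V=0$, the Hessian $D^{2}_{(x,y)}V$ has only the $yy$ block. Formula \eqref{eq:lifted-generator} therefore reduces to
\[
\mathcal{A}V=y^{\top}\Big(\frac{\partial f}{\partial x}\Big)^{\top}\frac{\partial|y|_{*}}{\partial y}+\frac{1}{2}\sum_{i=1}^{m}y^{\top}\Big(\frac{\partial\sigma_{i}}{\partial x}\Big)^{\top}\frac{\partial^{2}|y|_{*}}{\partial y^{2}}\frac{\partial\sigma_{i}}{\partial x}y .
\]
This is precisely the expression inside the supremum in \eqref{eq:matrix-measure}, with $A=\partial f/\partial x$ and $C_{i}=\partial\sigma_{i}/\partial x$, except that it is evaluated at a general $y\neq0$ rather than on the unit sphere.

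\emph{Step 3: homogeneity.} Since the norm is positively $1$-homogeneous, $\partial|y|_{*}/\partial y$ is $0$-homogeneous and $\partial^{2}|y|_{*}/\partial y^{2}$ is $(-1)$-homogeneous. Hence both terms above are $1$-homogeneous in $y$. Writing $y=|y|_{*}\hat y$ with $|\hat y|_{*}=1$ gives
\[
\mathcal{A}V(t,x,y)=|y|_{*}\,\Phi(x,\hat y)\le|y|_{*}\,\mu_{|\cdot|_{*}}\Big(\frac{\partial f}{\partial x};\frac{\partial\sigma_{1}}{\partial x},\dots,\frac{\partial\sigma_{m}}{\partial x}\Big)(x)<-C\,V,
\]
where $\Phi$ denotes the bracketed expression in \eqref{eq:matrix-measure}. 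This is \eqref{eq:V-decay} with $\lambda=C$.

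Theorem~\ref{thm:main} then yields $W_{1}$ contraction with respect to the Euclidean distance. To obtain it in the $|\cdot|_{*}$-induced metric $W_{1,|\cdot|_{*}}$, the most direct route is not to use equivalence of norms, since that would only give a constant $K=c_{2}/c_{1}$. Instead, I would rerun the proof of Theorem~\ref{thm:main} with $p=1$. Dynkin's formula with localization gives $\E|Y_{t}|_{*}\le e^{-Ct}|y|_{*}$. Lemma~\ref{lem:FTC}, combined with the triangle inequality for the Bochner integral in $L^{1}(\Omega;(\mathbb{R}^{n},|\cdot|_{*}))$, then gives $\E|X^{x_{2}}_{t}-X^{x_{1}}_{t}|_{*}\le e^{-Ct}|x_{2}-x_{1}|_{*}$. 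Finally, one takes a $W_{1,|\cdot|_{*}}$-optimal initial coupling and drives both copies with the same Brownian motion.

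The main obstacle I expect is the localization for $V$, which is only Lipschitz at $y=0$. This is handled exactly as in Step 1 of Theorem~\ref{thm:main}, using the a.s.\ invertibility of the fundamental matrix, so the homogeneity identity is the only genuinely new ingredient.
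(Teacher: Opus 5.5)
Your proposal is correct and follows the paper's proof: take $V(t,x,y)=|y|_{*}$, compute $\mathcal{A}V$ from \eqref{eq:lifted-generator}, bound it by $\mu_{|\cdot|_{*}}(\cdot)\,|y|_{*}\le -C|y|_{*}$, and invoke the $p=1$ case of Theorem~\ref{thm:main}. Your homogeneity argument and your rerun of the path-integration step in the $|\cdot|_{*}$ norm (which gives $K=1$ rather than a norm-equivalence constant) only make explicit details that the paper's one-line conclusion leaves implicit.
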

\begin{proof}
Consider $V(t,x,y)=|y|_{*}$. Then it is easy to check that, outside
$y=0$, there holds
\begin{align*}
 & \mathcal{A}V(t,x,y)\\
= & \,y^{\top}\left(\frac{\partial f}{\partial x}\right)^{\top}\frac{\partial|y|_{*}}{\partial y}+\frac{1}{2}\sum^{m}_{i=1}y^{\top}\left(\frac{\partial\sigma_{i}}{\partial x}\right)^{\top}\frac{\partial^{2}|y|_{*}}{\partial y^{2}}\frac{\partial\sigma_{i}}{\partial x}y\\
\le & \,\mu_{|\cdot|_{*}}\left(\frac{\partial f(t,x)}{\partial x};\frac{\partial\sigma_{1}(t,x)}{\partial x},\cdots,\frac{\partial\sigma_{m}(t,x)}{\partial x}\right)|y|_{*}\\
\le & \,-C|y|_{*}.
\end{align*}
This yields $W_{1,|\cdot|_{*}}$ contraction. 
\end{proof}

\begin{rem}
	The stochastic logarithmic norm and stochastic logarithmic Lipschitz constant 
	introduced in \cite[Definition 2, linear case]{ahmad2008stochastic} and
	\cite[Definition 4, $l=1$]{aminzare2022stochastic} are equivalent to the stochastic matrix measure defined
	above, as follows directly from It\^o's formula. The present definition is obtained from a
	different perspective. It arises naturally by applying the
	Finsler-Lyapunov differential framework to the stochastic variational
	dynamics. Moreover, the formula \eqref{eq:matrix-measure} is also more amenable to
	computation, since it gives a pointwise expression in terms of the
	drift and diffusion matrices and the derivatives of the underlying
	norm, without resorting to short-time stochastic expansions.
	$L_1$ contraction results have been obtained in \cite{Koo2026} in terms of similar
	conditions,
	though the authors did not define explicitly stochastic matrix measure. 
\end{rem}

\begin{rem}
If $|\cdot|_{*}$ is the usual Euclidean $2$-norm $|\cdot|$, and
that the system \eqref{eq:ItoSDE} is scalar, then it can be readily
checked that
\[
\mu_{|\cdot|}\left(\frac{\partial f}{\partial x},\frac{\partial\sigma}{\partial x}\right)=\mu_{|\cdot|}\left(\frac{\partial f}{\partial x},0\right).
\]
That is, a scalar SDE is $W_{1,|\cdot|}$ contracting if the deterministic
system $\dot{x}=f(x)$ is contracting in the Euclidean norm. This
is a rather interesting phenomenon which does not hold for $W_{2}$ contraction. 
\end{rem}

\section{Wasserstein Contraction on Riemannian Manifolds}

We now extend the differential construction of the previous section
to manifolds. Let $(M,g)$ be a connected Riemannian manifold and
consider the Stratonovich SDE 
\begin{equation}
dX_{t}=f_{0}(t,X_{t})\,dt+\sum^{m}_{i=1}f_{i}(t,X_{t})\circ dB^{i}_{t},\label{eq:manifold-stratonovich}
\end{equation}
where $f_{0},f_{1},\ldots,f_{m}$ are vector fields on $M$. The Stratonovich
representation is preferred on manifolds \cite{ikeda1989stochastic},
as it is intrinsic and obeys the ordinary chain rule.

Throughout this section, $d_{g}$ and $|\cdot|_{g}$ denote respectively
the Riemannian distance and norm induced by $g$. The corresponding
$2$-Wasserstein distance was defined in \eqref{eq:W2g-def}.

We impose the following standing regularity assumption.

\begin{assumption} \label{ass:manifold} The Riemannian manifold
$(M,g)$ is geodesically complete. For every finite $T>0$, the vector
fields $f_{i}(t,\cdot)$ are $C^{2}$ in the state variable and continuous
in time. Their coordinate representations satisfy, on every relatively
compact chart, the local regularity conditions required in Assumption~\ref{ass:regularity}.
\end{assumption}

\subsection{Stochastic prolongation on the tangent bundle}

As in deterministic contraction theory, the first step is to describe
the evolution of an infinitesimal displacement. On a manifold this
is naturally achieved by lifting the dynamics from $M$ to the tangent
bundle $TM$.

Let $W$ be a vector field on $M$. Its \emph{complete lift}, denoted
by $W^{\dagger}$, is the vector field on $TM$ whose flow is the
tangent map of the flow of $W$. More precisely, if $\phi_{t}$ is
the flow of $W$, then the flow of $W^{\dagger}$ is $D\phi_{t}$.
In local coordinates $(x^{a},y^{a})$ on $TM$, the complete lift
reads 
\begin{equation}
W^{\dagger}=W^{a}(x)\frac{\partial}{\partial x^{a}}+\frac{\partial W^{a}}{\partial x^{b}}(x)y^{b}\frac{\partial}{\partial y^{a}}.\label{eq:complete-lift-coordinate}
\end{equation}
We refer to \cite{crampin1986applicable,Wu2021} for further details.

Applying the complete lift to each vector field in \eqref{eq:manifold-stratonovich}
gives the \emph{stochastic prolongation} 
\begin{equation}
dZ_{t}=f^{\dagger}_{0}(t,Z_{t})\,dt+\sum^{m}_{i=1}f^{\dagger}_{i}(t,Z_{t})\circ dB^{i}_{t},\label{eq:manifold-prolongation}
\end{equation}
where $Z_{t}\in TM$. In a local coordinate chart, \eqref{eq:manifold-prolongation}
takes the familiar form 
\begin{align}
dX_{t} & =f_{0}(t,X_{t})\,dt+\sum^{m}_{i=1}f_{i}(t,X_{t})\circ dB^{i}_{t},\label{eq:manifold-prolongation-x}\\
dY_{t} & =\frac{\partial f_{0}}{\partial x}(t,X_{t})Y_{t}\,dt+\sum^{m}_{i=1}\frac{\partial f_{i}}{\partial x}(t,X_{t})Y_{t}\circ dB^{i}_{t}.\label{eq:manifold-prolongation-y}
\end{align}
Thus the second component is precisely the local stochastic variational
dynamics.

The following identity connects the complete lift with the Riemannian
metric.
\begin{lem}
\label{lem:lift}Let $W$ be a vector field on $(M,g)$ and $W^{\dagger}$
the complete lift of $W$. Then for $h(v)=g(v,v)$, there holds 
\[
W^{\dagger}(h)(v)=(\mathcal{L}_{W}g)(v,v)
\]where $\mathcal{L}_W g$ is the metric Lie derivative of $g$ along $W$.
\end{lem}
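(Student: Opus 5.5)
The natural route is to use the flow characterization of both sides. Let $\phi_t$ denote the (local) flow of $W$; by definition of the complete lift, the flow of $W^{\dagger}$ on $TM$ is the tangent map $D\phi_t$. For $v\in T_xM$, the value $W^{\dagger}(h)(v)$ is then the derivative of $h$ along the integral curve of $W^{\dagger}$ through $v$:
\[
W^{\dagger}(h)(v)=\frac{d}{dt}\Big|_{t=0} h\bigl(D\phi_t(v)\bigr)=\frac{d}{dt}\Big|_{t=0} g_{\phi_t(x)}\bigl(D\phi_t v,D\phi_t v\bigr).
\]
The last expression is $\frac{d}{dt}|_{t=0}(\phi_t^{*}g)_x(v,v)$, and by the standard definition of the Lie derivative of a tensor field, $\mathcal{L}_Wg=\frac{d}{dt}|_{t=0}\phi_t^{*}g$. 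Evaluating at $(v,v)$ gives the identity. For time-dependent $W$, the same argument applies at each frozen $t$, since the lift and the Lie derivative are taken pointwise in time.

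As an independent check, and to avoid relying on existence of flows (only local flows are needed anyway), I would also verify the identity in local coordinates using \eqref{eq:complete-lift-coordinate}. With $h(x,y)=g_{ab}(x)y^ay^b$, we have $\partial_{x^c}h=\partial_c g_{ab}\,y^ay^b$ and $\partial_{y^a}h=2g_{ab}y^b$. Hence
\[
W^{\dagger}(h)=W^c\partial_cg_{ab}\,y^ay^b+2g_{ab}\,\partial_cW^a\,y^cy^b.
\]
Symmetrizing the second term gives $(W^c\partial_cg_{ab}+g_{cb}\partial_aW^c+g_{ac}\partial_bW^c)y^ay^b$. This is exactly the coordinate formula $(\mathcal{L}_Wg)_{ab}y^ay^b$.

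The only delicate point is bookkeeping: making sure the index relabeling in the symmetrization matches the coordinate expression of $\mathcal{L}_Wg$, and noting that the quadratic form only detects the symmetric part, which is harmless since $\mathcal{L}_Wg$ is symmetric. No analytic obstacle arises; the statement is purely algebraic and pointwise.
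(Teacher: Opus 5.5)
Your proof is correct and follows the paper's argument: both use that the flow of $W^{\dagger}$ is $D\phi_t$, differentiate $h(D\phi_t v)$ at $t=0$, and identify the result with $\mathcal{L}_W g$ through its flow definition \eqref{eq:metric-Lie}. Your extra coordinate check via \eqref{eq:complete-lift-coordinate}, giving $(W^c\partial_c g_{ab}+g_{cb}\partial_a W^c+g_{ac}\partial_b W^c)y^ay^b$, is also correct and confirms the identity without using flows, though the paper does not need it.
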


\begin{proof}
Let $\phi_{t}$ be the flow of $W$. Then the flow of $W^{\dagger}$
is $D\phi_{t}$. Hence
\begin{align*}
W^{\dagger}(h)(v) & =\left.\frac{d}{dt}\right|_{t=0}h(D\phi_{t}(v))\\
 & =\left.\frac{d}{dt}\right|_{t=0}g(D\phi_{t},D\phi_{t})\\
 & =(\mathcal{L}_{W}g)(v,v),
\end{align*}in which the last equality follows from the definition of metric Lie derivative, see
\eqref{eq:metric-Lie}.
\end{proof}

The stochastic prolongation has the same interpretation as the mean-square
variational dynamics of Section~\ref{sec:main}.
\begin{lem}
\label{lem:mani-esti1} Let $(x,v)\in TM$ and $\gamma:(-\epsilon,\epsilon)\to M$
a $C^{1}$ curve with $\gamma(0)=x$ and $\gamma'(0)=v$. Denote denote
the second component of the prolonged system \eqref{eq:manifold-prolongation}
as $Y^{x,v}_{t}$ with initial condition $(x,v)$. Then $Y^{x,v}_{t}$
is the mean-square derivative of $X^{\gamma(h)}_{t}$ with respect
to initial condition in the following sense,
\begin{equation}
\lim_{h\to0}\frac{\mathbb{E}d^{2}_{g}(X^{\gamma(h)}_{t},\exp_{X^{\gamma(h)}_{t}}(hY^{x,v}_{t}))}{h^{2}}=0.\label{eq:mani-mean der}
\end{equation}
Moreover, for fixed $t$, the map $(x,v)\longmapsto Y^{x,v}_{t}$
is continuous in mean square.
\end{lem}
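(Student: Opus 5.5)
We read the statement with the base point of the exponential map equal to $X^{x}_{t}$, the solution started at $\gamma(0)=x$; this is the meaningful version of \eqref{eq:mani-mean der}. We also read $d_g$ and $\exp$ at a fixed $t$. The goal is
\[
\E\, d_g^2\bigl(X^{\gamma(h)}_t,\exp_{X^x_t}(hY^{x,v}_t)\bigr)=o(h^2).
\]
The plan is to reduce everything to the Euclidean Proposition~\ref{prop:msdiff} by embedding $M$ into a Euclidean space, and to control the part of $\Omega$ on which trajectories travel far by localization.

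\emph{Step 1: Euclidean reduction.} Fix a smooth embedding $\iota:M\to\R^N$; Whitney suffices, and Nash is convenient because it makes lengths comparable. Push forward $f_0,\dots,f_m$ to vector fields $F_i$ on $\iota(M)$. Extend each $F_i$ to all of $\R^N$ so that it is tangent to $\iota(M)$ and satisfies Assumption~\ref{ass:regularity} on a large ball. This is possible on compact pieces using a tubular neighbourhood and cutoffs. Write the Stratonovich system in Itô form,
\[
\tilde F_0=F_0+\tfrac12\sum_i DF_i\,F_i .
\]
The Itô form of the prolonged system \eqref{eq:manifold-prolongation-x}--\eqref{eq:manifold-prolongation-y} is then exactly the Itô variational system \eqref{eq:lifted} of $(\tilde F_0,F_i)$. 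This is a short computation using that the Stratonovich correction of $DF_i\,Y$ along $(F_i,DF_i Y)$ equals the derivative of the correction term $\tfrac12\sum_i DF_i F_i$. Hence $\iota_*Y^{x,v}_t$ is the mean-square derivative of $\iota(X^{\gamma(h)}_t)$ furnished by Proposition~\ref{prop:msdiff}. The directional derivative along the curve $\gamma$, rather than along a straight line, follows by the chain rule, using the $C^1$ Fréchet differentiability established in the proof of Lemma~\ref{lem:FTC}.

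\emph{Step 2: comparing metrics.} On a compact set $K\subset M$ there are two local estimates. First, $d_g(p,q)\le C_K|\iota(p)-\iota(q)|$ for $p,q$ close. Second,
\[
\bigl|\iota(\exp_p(w))-\iota(p)-D\iota\, w\bigr|\le C_K|w|^2 \quad\text{for } |w|\le 1 .
\]
On the event $E_R=\{X^{x}_t,X^{\gamma(h)}_t\in K_R\}$ these give, with $\xi_h=h^{-1}\bigl(\iota X^{\gamma(h)}_t-\iota X^x_t\bigr)-\iota_*Y^{x,v}_t$, the pointwise bound
\[
h^{-2}d_g^2\le C\bigl(|\xi_h|^2+h^2|Y|^4\bigr).
\]
A further truncation on $\{h|Y|\le1\}$ is needed. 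The first term tends to $0$ in $L^1$ by Step 1. The second tends to $0$ provided $\E|Y|^4<\infty$ locally, which holds by the same Gronwall/BDG argument with $p=4$.

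\emph{Step 3: tails (main obstacle).} The difficulty is the complement $E_R^c$, together with the region where $h|Y|$ is large. There $d_g$ is not controlled by the embedding, since $\exp$ may wrap around. I would first try the crude bound
\[
d_g\bigl(X^{\gamma(h)}_t,\exp_{X^x_t}(hY)\bigr)\le d_g\bigl(X^{\gamma(h)}_t,X^x_t\bigr)+h|Y|_g .
\]
Then I would use Cauchy--Schwarz with higher moments: $\E\, d_g^4(X^{\gamma(h)}_t,X^x_t)\le Ch^4$, obtained by a stopping-time version of Dynkin's formula for $d_g^2$ along the coupled pair (or via the embedding), together with $\E|Y|^4<\infty$. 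This gives a contribution of order $h^{2}\,\mathbb P(E_R^c\cup\{h|Y|>1\})^{1/2}$. Since $\mathbb P(E_R^c)\to0$ as $R\to\infty$ uniformly for small $h$ (non-explosion under Assumption~\ref{ass:manifold}), letting first $h\to0$ and then $R\to\infty$ yields \eqref{eq:mani-mean der}.

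\emph{Step 4: continuity.} Continuity of $(x,v)\mapsto Y^{x,v}_t$ in mean square follows from the continuity statement of Proposition~\ref{prop:msdiff} applied to the embedded system, together with the same localization.
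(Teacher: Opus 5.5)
\textbf{Overall.} Steps~1--2 are sound, but Step~3 rests on global moment bounds that the hypotheses do not provide; in fact they fail in general.

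\textbf{Steps 1--2.} Your reading of the base point as $X^x_t$ is the right one; it is how Lemma~\ref{lem:mani-esti2} uses the statement. Replacing the paper's chart-by-chart localization, which re-localizes at random chart-switching times, by one proper embedding $\iota:M\to\R^N$ with cut-off extensions is a legitimate alternative. The Stratonovich-to-It\^o identity you use is the same one the paper invokes in coordinates. The chain rule along $\iota\circ\gamma$ replaces the paper's use of $\gamma(h)=\exp_x(hv)+o(h)$. One correction is needed. The cut-off system coincides with the true one only up to the first exit time, so $E_R$ must be the path event $\{X^x_s,X^{\gamma(h)}_s\in K_R \text{ for all } s\le t\}$, not a condition on the positions at time $t$.

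\textbf{The gap in Step 3.} To interchange $h\to0$ and $R\to\infty$ you need $\E\, d_g^4(X^{\gamma(h)}_t,X^x_t)\le Ch^4$ and $\E|Y^{x,v}_t|_g^4<\infty$ with $C$ independent of $R$. That is global control, whereas Assumption~\ref{ass:manifold} is purely local. None of your proposed routes supplies it:
\begin{itemize}
\item The BDG/Gronwall constants are Lipschitz constants of the cut-off fields, and these grow with $R$.
\item An embedding gives at best $|\iota p-\iota q|\le d_g(p,q)$ (Nash), which is the wrong direction for bounding $d_g$ on the tail event.
\item A Dynkin argument for $d_g^2$ along the synchronous pair would need global bounds on $\nabla f_i$, $\nabla^2 f_i$ and the curvature, plus a treatment of the cut locus.
\end{itemize}

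\textbf{Counterexample.} These bounds actually fail under the stated hypotheses. On $M=\R$ let $dX=\sigma(X)\circ dB$, with $\sigma>0$ smooth and $\sigma(x)=|x|\log|x|$ for large $|x|$. Set $F'=1/\sigma$ and $G=F^{-1}$. Then $X^x_t=G(F(x)+B_t)$, which is nonexplosive because $F$ maps onto $\R$. Moreover $Y_t=\sigma(X_t)/\sigma(x)$, and $G(u)=\exp(e^{u-c})$ for large $u$. Hence $\E|Y_t|^2=\infty$, and $\E|X^{x+h}_t-X^x_t-hY_t|^2=+\infty$ for every $h>0$, so even the conclusion fails.

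\textbf{What is needed.} Some global hypothesis must be added. One option is $M$ compact. Then your argument closes with $E_R=\Omega$, using only uniform integrability of $h^{-2}|\iota X^{\gamma(h)}_t-\iota X^x_t|^2$ together with $\E|Y|_g^2<\infty$ on $\{h|Y|>1\}$. Another option is uniform bounds on the derivatives of the $f_i$.

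The paper's sketch (``nonexplosion allows the stopping times to tend to infinity'') passes over the same interchange of limits. So you have located the real difficulty, but Step~3 as written does not resolve it, and Step~4 inherits the same problem. Separately, uniformity of $\mathbb{P}(E_R^c)\to0$ in $h$ needs continuity in the initial point plus a Dini-type argument, not just nonexplosion.
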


\begin{proof}
The statement is local. Consider first a relatively compact coordinate
chart and stop the processes before leaving the chart. After converting
the Stratonovich equations to Itô form, the coordinate representation
of \eqref{eq:manifold-prolongation} coincides with the stochastic
variational equation of the coordinate SDE. Proposition \ref{prop:msdiff}
therefore gives the corresponding mean-square differentiability statement
in Euclidean coordinates.

On a relatively compact chart, the Euclidean norm and the Riemannian
distance are locally equivalent. Moreover, 
\(
\gamma(h)=\exp_{x}(hv)+o(h)
\)
in any local coordinate representation. Hence the coordinate mean-square
estimate is equivalent to \eqref{eq:mani-mean der}. The result is
extended along a trajectory by successive localization in relatively
compact charts. Nonexplosion allows the stopping times to tend to
infinity. The continuity statement follows from the corresponding
continuous dependence estimate for the prolonged SDE. 
\end{proof}

The next lemma is the manifold analogue of the path-integration argument
used in Section~\ref{sec:main}.
\begin{lem}
\label{lem:mani-esti2}Let $\gamma:[0,1]\to M$ be a continuous and
piecewise $C^{1}$ curve in $M$. Let $Z^{\gamma(s),\gamma'(s)}_{t}$
be the solution of \eqref{eq:manifold-prolongation} with initial
condition $(\gamma(s),\gamma'(s))$. Then for every $t\ge0$, there
holds
\begin{equation}
\mathbb{E}d^{2}_{g}(X^{\gamma(0)}_{t},X^{\gamma(1)}_{t})\le\left(\int^{1}_{0}\sqrt{\mathbb{E}|Y^{\gamma(s),\gamma'(s)}_{t}|^{2}_{g}}ds\right)^{2}.\label{eq:manifold-path}
\end{equation}
\end{lem}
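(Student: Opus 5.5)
The plan is to reduce the estimate to a statement about the length of a random curve. For each fixed $t$ and each sample $\omega$, consider the curve
\[
c_\omega: s\mapsto X^{\gamma(s)}_t(\omega),\qquad s\in[0,1].
\]
If this curve were piecewise $C^1$ with velocity $Y^{\gamma(s),\gamma'(s)}_t(\omega)$, then $d_g(X^{\gamma(0)}_t,X^{\gamma(1)}_t)\le\int_0^1|Y^{\gamma(s),\gamma'(s)}_t|_g\,ds$ pathwise. Taking the $L_2(\Omega)$ norm and applying Minkowski's integral inequality would give
\[
\bigl(\mathbb{E}d_g^2\bigr)^{1/2}\le\int_0^1\bigl(\mathbb{E}|Y_t^{\gamma(s),\gamma'(s)}|_g^2\bigr)^{1/2}ds,
\]
which is \eqref{eq:manifold-path}. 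Pathwise differentiability in $s$ is not available from what precedes, only the mean-square statement of Lemma~\ref{lem:mani-esti1}. I will therefore run the argument through partitions, in the spirit of Lemma~\ref{lem:FTC}.

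\textbf{Step 1.} Fix a partition $0=s_0<s_1<\dots<s_N=1$ that refines the breakpoints of $\gamma$. By the triangle inequality for $d_g$ and Minkowski in $L_2(\Omega)$,
\[
\bigl(\mathbb{E}d_g^2(X^{\gamma(0)}_t,X^{\gamma(1)}_t)\bigr)^{1/2}\le\sum_k\bigl(\mathbb{E}d_g^2(X^{\gamma(s_k)}_t,X^{\gamma(s_{k+1})}_t)\bigr)^{1/2}.
\]
This reduces the claim to controlling each increment, which I split as follows.

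\textbf{Step 2.} With $h_k=s_{k+1}-s_k$, I bound each increment by
\[
d_g\bigl(X^{\gamma(s_{k+1})}_t,\exp(h_kY_t^{\gamma(s_k),\gamma'(s_k)})\bigr)+h_k\,|Y_t^{\gamma(s_k),\gamma'(s_k)}|_g ,
\]
using $d_g(p,\exp_p(w))\le|w|_g$. (The exponential should be based at $X^{\gamma(s_k)}_t$; I will adapt the indexing of \eqref{eq:mani-mean der} accordingly, applying it to the curve $h\mapsto\gamma(s_k+h)$.) Lemma~\ref{lem:mani-esti1} makes the first term $o(h_k)$ in $L_2$.

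\textbf{Step 3.} Summing gives a Riemann sum for $\int_0^1\|Y^{\gamma(s),\gamma'(s)}_t\|_{L_2}ds$ plus the remainders $\sum_k o(h_k)$. The integrand is continuous on each piece of $\gamma$, by the continuity part of Lemma~\ref{lem:mani-esti1}, so the Riemann sums converge as the mesh tends to zero. The main obstacle is that the $o(h)$ in Lemma~\ref{lem:mani-esti1} is pointwise in the base point, while summing about $1/h$ remainders needs it uniformly over $s\in[0,1]$.

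To get uniformity I would go back to the chart-level estimate behind Lemma~\ref{lem:mani-esti1}. There Proposition~\ref{prop:msdiff} and Lemma~\ref{lem:FTC} give, in coordinates,
\[
X^{\gamma(s+h)}_t-X^{\gamma(s)}_t=\int_s^{s+h}Y^{\gamma(r),\gamma'(r)}_t\,dr
\]
in $L_2$, with $(x,y)\mapsto Y^{x,y}$ uniformly continuous on the compact set $\{(\gamma(s),\gamma'(s))\}$. This makes the remainder $o(h)$ uniformly in $s$. Passing from coordinates to the manifold uses the local equivalence of $d_g$ with the Euclidean distance. For the unbounded excursions, I localize by stopping times: on the event that the trajectories leave a large compact set the contribution is small, by nonexplosion and second-moment bounds. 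Alternatively, replace $d_g$ by a truncation $d_g\wedge R$, prove the inequality for it, and let $R\to\infty$ by monotone convergence. With the uniform remainder in hand, letting the mesh go to zero yields \eqref{eq:manifold-path}.
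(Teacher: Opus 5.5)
\textbf{There is a gap in Step 3.} Your Steps 1--2 are sound, and you correctly see that summing about $1/h$ remainders needs the $o(h)$ of Lemma~\ref{lem:mani-esti1} to be uniform in $s$. Within a single chart, or in $\mathbb{R}^n$, your route to uniformity works. On $M$ it does not.

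\emph{Chart switching.} The $L_2$ identity $X^{\gamma(s+h)}_t-X^{\gamma(s)}_t=\int_s^{s+h}Y^{\gamma(r),\gamma'(r)}_t\,dr$ from Proposition~\ref{prop:msdiff} and Lemma~\ref{lem:FTC} holds only while all the trajectories stay in one chart. Over $[0,t]$ they generally cross several charts, even inside a compact set. Stopping at chart exits gives estimates at random times that depend on the initial point, not at the fixed time $t$.

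\emph{Localization.} Your localization remark does not fix this. To absorb $N\approx 1/h$ remainders, the bad event $A$ must contribute $o(h)$ in $L_2$ to each remainder, uniformly in $s$. That is, you need $\mathbb{E}[d_g^2(X^{\gamma(s+h)}_t,X^{\gamma(s)}_t)\mathbf{1}_A]=o(h^2)$, which requires uniform integrability of the squared difference quotients---essentially the estimate being proved. Smallness of $\mathbb{P}(A)$ alone is not enough. Truncating to $d_g\wedge R$ only bounds each bad-event remainder by $R\sqrt{\mathbb{P}(A)}$, so the $N$ of them total about $R\sqrt{\mathbb{P}(A)}/h$. For a fixed compact set, this does not vanish as the mesh shrinks.

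\textbf{The missing idea is that no uniformity is needed.} Set $\rho(U,V)=(\mathbb{E}d_g^2(U,V))^{1/2}$. Your Step 2 already gives, for each fixed $s$, the pointwise bound
\[
\limsup_{h\to0}\rho(X^{\gamma(s+h)}_t,X^{\gamma(s)}_t)/|h|\le\|Y^{\gamma(s),\gamma'(s)}_t\|_{L_2},
\]
and the right side is continuous in $s$. The paper treats $s\mapsto X^{\gamma(s)}_t$ as a curve in the metric space defined by $\rho$. It then integrates this upper metric derivative with Lemma~\ref{lem:metric_lemma}, a Dini-derivative argument that uses only pointwise information.

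If you want to keep partitions, replace the uniform mesh by a gauge:
\begin{enumerate}
\item Pick $\delta(\tau)>0$ so that the pointwise bound holds with slack $\epsilon$ for $|h|<\delta(\tau)$.
\item Take a $\delta$-fine tagged partition with small mesh (Cousin's lemma).
\item Apply the bound at each tag $\tau_k$ on both $[s_k,\tau_k]$ and $[\tau_k,s_{k+1}]$.
\end{enumerate}
Summing gives a Riemann sum for $\int_0^1\|Y^{\gamma(s),\gamma'(s)}_t\|_{L_2}ds$ plus $\epsilon$, and continuity of the integrand finishes the proof.
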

\begin{proof}
It is enough to consider $\gamma\in C^{1}$. Define 
\[
\rho(U,V):=\left(\E d^{2}_{g}(U,V)\right)^{1/2}.
\]
By Minkowski's inequality, $\rho$ is a metric on random variables
modulo almost-sure equality. For fixed $s$, Lemma~\ref{lem:mani-esti1}
and the triangle inequality give 
\begin{align*}
 & \rho\left(X^{\gamma(s+h)}_{t},X^{\gamma(s)}_{t}\right)\\
 & \quad\le\rho\left(X^{\gamma(s+h)}_{t},\exp_{X^{\gamma(s)}_{t}}\left(hY^{\gamma(s),\gamma'(s)}_{t}\right)\right)\\
 & \qquad+\rho\left(\exp_{X^{\gamma(s)}_{t}}\left(hY^{\gamma(s),\gamma'(s)}_{t}\right),X^{\gamma(s)}_{t}\right).
\end{align*}
The first term is $o(|h|)$. For the second term, the radial geodesic
gives 
\[
d_{g}\left(\exp_{X^{\gamma(s)}_{t}}(hY^{\gamma(s),\gamma'(s)}_{t}),X^{\gamma(s)}_{t}\right)\le|h|\,|Y^{\gamma(s),\gamma'(s)}_{t}|_{g}.
\]
Consequently, 
\begin{equation}
\limsup_{h\to0}\frac{\rho(X^{\gamma(s+h)}_{t},X^{\gamma(s)}_{t})}{|h|}\le\left(\E|Y^{\gamma(s),\gamma'(s)}_{t}|^{2}_{g}\right)^{1/2}.\label{eq:manifold-metric-derivative}
\end{equation}
The right-hand side is continuous in $s$. Applying Lemma~\ref{lem:metric_lemma}
to the curve $s\mapsto X^{\gamma(s)}_{t}$ in the metric space induced
by $\rho$ gives \eqref{eq:manifold-path}.
\end{proof}

\subsection{An intrinsic contraction criterion }\label{subsec:mani-no-coup}

We can now formulate the direct manifold counterpart of the Euclidean
differential contraction theorem.
\begin{thm}
\label{thm:stra-Rie}Suppose Assumption \ref{ass:manifold} holds.
If there exists a positive constant $\lambda$ such that
\begin{equation}
\mathcal{L}_{f_{0}}g+\frac{1}{2}\sum^{m}_{i=1}\mathcal{L}^{2}_{f_{i}}g\le-2\lambda g,\label{eq:thm-stra-ineq}
\end{equation}
then the system \eqref{eq:manifold-stratonovich} is $W_{2}$ contracting. 
\end{thm}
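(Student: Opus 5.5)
The plan is to mirror the proof of Theorem~\ref{thm:main} on the tangent bundle: show that the prolonged process of \eqref{eq:manifold-prolongation} contracts in mean square, integrate this along curves using Lemma~\ref{lem:mani-esti2}, and finally couple optimally.

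\emph{Step 1: the generator of $h$.} Let $h(z)=g(v,v)$ for $z=(x,v)\in TM$. For a Stratonovich SDE, the generator acting on a smooth function is $\mathcal{A}h=f_0^\dagger h+\frac12\sum_i f_i^\dagger(f_i^\dagger h)$. By Lemma~\ref{lem:lift}, $f_i^\dagger h(v)=(\mathcal{L}_{f_i}g)(v,v)$. The function $v\mapsto(\mathcal{L}_{f_i}g)(v,v)$ is again the quadratic function associated with the symmetric tensor $\mathcal{L}_{f_i}g$. The argument of Lemma~\ref{lem:lift} applies verbatim to any symmetric $2$-tensor $k$: if $\phi_t$ is the flow of $W$, then $\frac{d}{dt}k(D\phi_t v,D\phi_t v)|_{t=0}=(\mathcal{L}_W k)(v,v)$. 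Applying this twice gives $f_i^\dagger f_i^\dagger h=(\mathcal{L}^2_{f_i}g)(v,v)$. Hence
\[
\mathcal{A}h(x,v)=\Bigl(\mathcal{L}_{f_0}g+\tfrac12\textstyle\sum_i\mathcal{L}^2_{f_i}g\Bigr)(v,v)\le-2\lambda\,h(x,v)
\]
by \eqref{eq:thm-stra-ineq}. For time-dependent fields the computation is done at each frozen $t$, since $h$ does not depend on $t$. In contrast to the Euclidean case, $h$ is smooth on all of $TM$, so no excision of the zero section is needed.

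\emph{Step 2: Dynkin and localization.} Apply It\^o's formula in Stratonovich form to $e^{2\lambda t}h(Z_t)$, stopped at the exit time $\tau_k$ of $Z$ from the compact set $\{(x,v): d_g(x_0,x)\le k,\ |v|_g\le k\}$. This gives $\E[e^{2\lambda(t\wedge\tau_k)}h(Z_{t\wedge\tau_k})]\le h(x,v)$. Nonexplosion, from Assumption~\ref{ass:manifold} together with completeness, gives $\tau_k\uparrow\infty$, and Fatou's lemma then yields
\[
\E|Y^{x,v}_t|_g^2\le e^{-2\lambda t}|v|_g^2 .
\]
I expect this localization to be the main technical obstacle. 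One must make sure the prolonged process does not explode in the fibre direction, and that the local martingale terms vanish after stopping. Both follow from the chartwise Assumption~\ref{ass:regularity} bounds and the linearity of the $Y$-equation on each chart, as in Lemma~\ref{lem:mani-esti1}.

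\emph{Step 3: path integration and coupling.} For two points $x_1,x_2$, completeness and Hopf--Rinow provide a minimizing geodesic $\gamma$ with constant speed $|\gamma'|_g=d_g(x_1,x_2)$. Lemma~\ref{lem:mani-esti2} combined with Step 2 gives
\[
\E d_g^2(X^{x_1}_t,X^{x_2}_t)\le e^{-2\lambda t}d_g^2(x_1,x_2).
\]
Next, sample $(X_0,\widetilde X_0)$ from an optimal coupling of $\mu_0,\nu_0$, which exists since $d_g^2$ is continuous on a Polish space, independently of $B$. Drive both copies with the same Brownian motion and condition on the initial pair. This gives $W_2(\mu_t,\nu_t)\le e^{-\lambda t}W_2(\mu_0,\nu_0)$, which is $W_2$ contraction with $K=1$.
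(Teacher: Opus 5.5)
Your proposal is correct and takes essentially the same route as the paper. You apply the generator $f_0^\dagger+\frac12\sum_i(f_i^\dagger)^2$ of the prolongation to $h(v)=g(v,v)$ via Lemma~\ref{lem:lift}, use Dynkin's formula to get $\E|Y^{x,v}_t|_g^2\le e^{-2\lambda t}|v|_g^2$, integrate along curves with Lemma~\ref{lem:mani-esti2}, and finish with an optimal initial coupling driven by a common Brownian motion.

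Your additions are sound. These are extending Lemma~\ref{lem:lift} to arbitrary symmetric $2$-tensors to justify the iterated Lie derivative, the explicit localization on compact subsets of $TM$, and using a minimizing geodesic instead of an infimum over curves. One caveat: Assumption~\ref{ass:manifold} together with completeness of $g$ does not by itself rule out explosion of $X$, so, like the paper, you are in effect taking nonexplosion as a standing hypothesis.
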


\begin{proof}
The infinitesimal generator of the prolonged system is
\[
\mathcal{B}=f^{\dagger}_{0}+\frac{1}{2}\sum^{m}_{i=1}\left(f^{\dagger}_{i}\right)^{2}.
\]
Let $h(v)=g(v,v)$, then by Lemma \ref{lem:lift},
\[
\mathcal{B}h=\mathcal{L}_{f_{0}}g+\frac{1}{2}\sum^{m}_{i=1}\mathcal{L}^{2}_{f_{i}}g\le-2\lambda h
\]
 Now, Dynkin's formula gives $\mathbb{E}h(Y^{x,y}_{t})\le e^{-\lambda t}h(Y^{x,y}_{0})$
or
\[
\mathbb{E}|Y^{x,y}_{t}|^{2}_{g}\le e^{-2\lambda t}|y|^{2}_{g}.
\]
Invoking Lemma \ref{lem:mani-esti2}, we get
\begin{align*}
\sqrt{\mathbb{E}d^{2}_{g}(X^{\gamma(0)}_{t},X^{\gamma(1)}_{t})} & \le\int^{1}_{0}\sqrt{e^{-2\lambda t}|\gamma'(s)|^{2}_{g}}ds\\
 & =e^{-\lambda t}\int^{1}_{0}|\gamma'(s)|_{g}ds
\end{align*}
for any continuous and piecewise $C^{1}$ curve. Since $\gamma$ is
arbitrary, it follows that
\[
\sqrt{\mathbb{E}d^{2}_{g}(X^{x}_{t},X^{y}_{t})}\le e^{-\lambda t}d_{g}(x,y).
\]
Applying the same technique as in the proof of Theorem \ref{thm:main}
(Step 2), we arrive at the desired conclusion.
\end{proof}

Below a simple example illustrating the use of Theorem \ref{thm:stra-Rie}.
\begin{example}[Brownian motion on sphere]
\label{exa:Sphere}The Brownian motion on sphere $S^{n-1}$ is the solution
to
\begin{equation}
dX_{t}=P_{X_{t}}\circ dB_{t}\label{eq:exa:-Sn}
\end{equation}
where $P_{x}=I-xx^{\top}$ and $B_{t}$ is a standard Brownian motion
in $\mathbb{R}^{n+1}$. Equip $S^{n-1}$ with the induced metric from
the ambiant space $\mathbb{R}^{n}$ and denote it as $g$. 

Let $\sigma_{i}(x)=P_{x}e_{i}=e_{i}-x_{i}x$, where $e_{i}$ is the
$i$-th coordinate vector in $\mathbb{R}^{n}$. Then it can be shown
that
\begin{align*}
\mathcal{L}^{2}_{\sigma_{i}}g & =(-2+6x^{2}_{i})g.
\end{align*}
Summing together, we arrive at 
\[
\frac{1}{2}\sum^{n}_{i=1}\mathcal{L}^{2}_{\sigma_{i}}g=-(n-3)g.
\]
Thus, the Brownian motion is contracting if $n\ge4$ with contraction
rate $\frac{n-3}{2}$. This estimate is conservative. As will be shown
in Section~\ref{sec:couple}, an appropriate differential coupling
recovers the curvature contribution and yields the sharper rate $\lambda=\frac{n-2}{2}$. 
\end{example}

\begin{example}[Killing diffusion]
\label{exa:killing}Suppose that in \eqref{eq:manifold-stratonovich},
$f_{i}$, $i=1,\cdots,m$ are Killing vector fields, i.e., $\mathcal{L}_{f_{i}}g=0$
for $i=1,\cdots,m$. The condition \eqref{eq:thm-stra-ineq} then
reduces to 
\[
\mathcal{L}_{f_{0}}g\le-2\lambda g.
\]
Consider for example the following system on $SO(3)$,
\begin{equation}
dR_{t}=\sum^{3}_{i=1}R_{t}E_{i}\circ dB^{i}_{t}\label{eq:SO(3)}
\end{equation}
where $\{E_{i}\}$ is the standard orthonormal basis of $\mathfrak{so}(3)$.
Let $g$ be a bi-invariant metric on $SO(3)$ defined as $g(V,W)=\frac{1}{2}{\rm tr}(V^{\top}W)$.
Then $RE_{i}$ are Killing fields on $SO(3)$. As a result, Theorem
\ref{thm:stra-Rie} cannot certify any strictly positive $W_{2}$
contraction rate for \eqref{eq:SO(3)}.
\end{example}

From Examples \ref{exa:Sphere} and \ref{exa:killing}, we see that
our methods are restrictive in analyzing $W_{2}$ contraction. The
limitation can be understood as follows. For system \eqref{eq:ItoSDE},
the measure $\mu_{t}$ depends only on $f$ and $A=\sigma\sigma^{\top}$.
That is, if we change $\sigma$ to $\delta$ such that $\delta\delta^{\top}=A$,
then $\mu_{t}$ stays unchanged. In the meantime, the inequality \eqref{eq:lifted-generator}
may change dramatically. 

\section{$W_{2}$ Contraction Beyond Synchronous Coupling}\label{sec:couple}

In view of the limitation of the methods developed in the previous
section, in this section, we introduce an additional infinitesimal
degree of freedom that accounts for the coupling of the driving Brownian
motions. We first develop the construction in Euclidean space, where
its probabilistic meaning is transparent, and then formulate its intrinsic
counterpart on Riemannian manifolds.

\subsection{Orthogonal differential coupling in Euclidean space}\label{subsec:Euclidean-coupling}

Consider a function $O:\mathbb{R}^{n}\times\mathbb{R}^{n}\to\mathbb{O}(m)$
where $\mathbb{O}(m)$ is the space of orthonomal matrices in $\mathbb{R}^{m}$,
i.e., $O(x,y)O(x,y)^{\top}=I_{m}$ for all $x,y\in\mathbb{R}^{n}$.
In addition, $O(x,x)=I$ for all $x\in\mathbb{R}^{n}$. We call such
$O(\cdot,\cdot)$ an \emph{orthonormal coupling}.

Let $v\in\mathbb{R}^{n}$ be a fixed vector. Consider an one-parameter
family of auxiliary systems of \eqref{eq:ItoSDE}
\begin{equation}
d\widetilde{X}^{x+hv}_{t}=f(t,\widetilde{X}^{x+hv}_{t})dt+\sigma(t,\widetilde{X}^{x+hv}_{t})O(X_{t},\widetilde{X}^{x+hv}_{t})dB_{t}\label{eq:aux_couple}
\end{equation}
for $h\in(-\epsilon,\epsilon)$, with initial condition $\widetilde{X}^{x+hv}_{0}=x+hv$.

Since $O$ is orthogonal, the law of $\widetilde{X}^{x+hv}_{t}$ is
nothing but $\mu^{x+hv}_{t}$, with $\mu^{x}_{t}$ being the law of
$X^{x}_{t}$. Indeed, since $O(X^{x}_{t},\widetilde{X}^{x+hv}_{t})$
is orthogonal, $\widetilde{B}_{t}=\int^{t}_{0}O(X_{s},\widetilde{X}^{x+hv}_{s})dB_{s}$
is again an $m$-dimensional Brownian motion.

Let us determine the infinitesimal effect of such a coupling. Assume
$O(\cdot,\cdot)$ is smooth. Then $h\mapsto\widetilde{X}^{x+hv}_{t}$
is mean square differentiable, and its derivative -- denoted as $\widetilde{Y}^{v}_{t}$
-- at $h=0$ satisfies
\begin{equation}
d\widetilde{Y}^{v}_{t}=\frac{\partial f}{\partial x}\widetilde{Y}^{v}_{t}dt+
\sum_{i=1}^m G_i(t,X_{t},\widetilde{Y}^{v}_{t})dB_{t}^i
\label{eq:Eu-O-couple-var}
\end{equation}
in which
\[
	G_i(t,x,y)=\frac{\partial\sigma_{i}(t,x)}{\partial x}y+ [\sigma(t,x)\Omega_{x}(y)]_i
\]
with $\Omega_{x}(y):=D_{2}O(x,x)(y)\in\mathfrak{so}(m)$ and the notation $[M]_i$ stands for the $i$-th column of the matrix $M$.

Compared with the synchronous variational dynamics, the only modification
is therefore 
\[
\frac{\partial\sigma_{i}}{\partial x}y\longrightarrow G_{i}(x,y)=\frac{\partial\sigma_{i}}{\partial x}y+[\sigma\Omega_{x}(y)]_{i}.
\]
The additional term represents the infinitesimal relative rotation
of the driving Brownian motions.

For a differential Lyapunov function $V(t,x,y)$, define the generator
associated with \eqref{eq:Eu-O-couple-var} by 
\begin{equation}
\begin{aligned}\widetilde{\mathcal{A}}_{\Omega}V={} & \partial_{t}V+(\nabla_{x}V)^{\top}f+(\nabla_{y}V)^{\top}\frac{\partial f}{\partial x}y\\
 & +\frac{1}{2}\sum^{m}_{i=1}\begin{bmatrix}\sigma_{i}\\
G_{i}
\end{bmatrix}^{\top}D^{2}_{(x,y)}V\begin{bmatrix}\sigma_{i}\\
G_{i}
\end{bmatrix}
\end{aligned}
\label{eq:lifted-gen-tilde}
\end{equation}

With these preparations, we are ready to state the following theorem.
\begin{thm}
\label{thm:O}Suppose that Assumption \ref{ass:regularity} holds.
Assume that there exist a mapping $(x,v)\mapsto\Omega_{x}(v)\in\mathfrak{so}(m)$,
linear in $v$, a function $V\in C([0,\infty)\times\R^{n}\times\R^{n})\cap C^{1,2,2}([0,\infty)\times\R^{n}\times(\R^{n}\backslash\{0\}))$
and constants $c_{1},c_{2},\lambda>0$ such that 
\begin{equation}
c_{1}|y|^{2}\le V(t,x,y)\le c_{2}|y|^{2}\label{eq:V-bounds-Omega}
\end{equation}
and 
\begin{equation}
\widetilde{A}_{\Omega}V(t,x,y)\le-2\lambda V(t,x,y)\label{eq:V-decay-Omega}
\end{equation}
for all $t\ge0$ and $x\in\R^{n}$, $y\in\mathbb{R}^{n}\backslash\{0\}$.
Then the system \eqref{eq:ItoSDE} is $W_{2}$ contracting. 
\end{thm}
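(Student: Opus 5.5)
The plan is to reduce Theorem~\ref{thm:O} to the argument used for Theorem~\ref{thm:main}. The new difficulty is that \eqref{eq:V-decay-Omega} only involves the \emph{infinitesimal} rotation $\Omega_x(v)$, so we must first build an actual orthogonal coupling realizing it. Then, instead of one Brownian motion shared along the whole curve of initial conditions, we chain couplings along a fine discretization of the curve.

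\emph{Step 1 (realizing $\Omega$).} Define
\[
O(x,z):=\exp\bigl(\Omega_x(\chi(z-x))\bigr),
\]
where $\chi:\R^n\to\R^n$ is a smooth bounded cutoff equal to the identity near $0$. Since $\Omega_x(\cdot)$ is linear with values in $\mathfrak{so}(m)$, $O(x,z)\in\mathbb{O}(m)$ and $O(x,x)=I$. Moreover $D_2O(x,x)(y)=\Omega_x(y)$. I will assume (or add as a mild hypothesis) that $x\mapsto\Omega_x$ is $C^1$ with bounded derivative. Then $\sigma(t,z)O(x,z)$ is globally Lipschitz in $(x,z)$ on $[0,T]$, so the coupled system \eqref{eq:aux_couple} has a unique strong solution. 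By Lévy's characterization, $\widetilde B=\int O\,dB$ is a Brownian motion, so $\widetilde X^{x+hv}_t\sim\mu^{x+hv}_t$. Applying Proposition~\ref{prop:msdiff} to the augmented Lipschitz system for $(X,\widetilde X^{x+hv})$ gives
\[
\E\bigl|\widetilde X^{x+hv}_t-X^x_t-h\widetilde Y^{v}_t\bigr|^2=o(h^2),
\]
where $\widetilde Y^v$ solves \eqref{eq:Eu-O-couple-var}. Repeating the localization and Dynkin argument of Step~1 of Theorem~\ref{thm:main} with the generator $\widetilde{\mathcal A}_\Omega$ and \eqref{eq:V-decay-Omega}, together with \eqref{eq:V-bounds-Omega}, yields
\[
\E|\widetilde Y^{x,v}_t|^2\le \frac{c_2}{c_1}e^{-2\lambda t}|v|^2 .
\]
Here $\widetilde Y_t\neq0$ a.s. still holds because \eqref{eq:Eu-O-couple-var} is linear in $\widetilde Y$, so $\widetilde Y_t=\widetilde\Phi_t\widetilde Y_0$ with $\widetilde\Phi_t$ invertible.

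\emph{Step 2 (chained coupling along a segment).} Fix $x_1,x_2$, set $\gamma(s)=(1-s)x_1+sx_2$, $s_k=k/N$, and $h=1/N$. Construct recursively processes $X^{(0)}=X^{x_1}$ driven by $B^{(0)}=B$, and $X^{(k+1)}$ started at $\gamma(s_{k+1})$ and driven by
\[
dB^{(k+1)}=O\bigl(X^{(k)},X^{(k+1)}\bigr)\,dB^{(k)}.
\]
All of these live on one probability space. Each $B^{(k)}$ is a Brownian motion, so $X^{(N)}\sim\mu^{x_2}_t$. Each pair $(X^{(k)},X^{(k+1)})$ is exactly the coupling of Step~1 with base point $\gamma(s_k)$ and direction $x_2-x_1$. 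Minkowski's inequality in $L_2(\Omega)$ then gives
\[
\bigl(\E|X^{(N)}_t-X^{(0)}_t|^2\bigr)^{1/2}\le \sum_{k}\Bigl(h\sqrt{c_2/c_1}\,e^{-\lambda t}|x_2-x_1|+r_k(h)\Bigr),
\]
where $r_k(h)=o(h)$.

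\emph{Main obstacle: uniformity of the remainder.} We need $\sum_k r_k(h)\to0$, i.e. $r_k(h)=o(h)$ uniformly in the base point $\gamma(s_k)$ over the compact segment. I would obtain this from a uniform version of Proposition~\ref{prop:msdiff}. Using the uniform continuity of the derivatives in Assumption~\ref{ass:regularity}(4), the continuity of $\Omega$, and the standard $L_2$ stability estimates, the difference quotient converges uniformly for base points in a compact set. Alternatively, one can write the remainder as $\int_0^1(\widetilde Y^{\text{at } x+\theta hv}-\widetilde Y^{\text{at } x})\,d\theta$ via the $L_2$ fundamental theorem of calculus (as in Lemma~\ref{lem:FTC}) and use equicontinuity of $(x,v)\mapsto\widetilde Y$ on compacts. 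Letting $N\to\infty$ then gives
\[
\E|X^{(N)}_t-X^{x_1}_t|^2\le \frac{c_2}{c_1}e^{-2\lambda t}|x_2-x_1|^2 .
\]

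\emph{Step 3 (randomized initial data).} As in Step~2 of Theorem~\ref{thm:main}, draw $(X_0,\widetilde X_0)$ from an optimal coupling of $(\mu_0,\nu_0)$, independent of $B$, and run the chained construction conditionally. This requires measurability of the construction in the initial points, which holds since it is a strong solution of a Lipschitz SDE system. The pair of endpoints is an admissible coupling of $(\mu_t,\nu_t)$. This yields $W_2(\mu_t,\nu_t)\le\sqrt{c_2/c_1}\,e^{-\lambda t}W_2(\mu_0,\nu_0)$.
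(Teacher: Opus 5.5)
\paragraph{Verdict.} Your argument is correct in outline, but it reaches the result by a different route from the paper.

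\paragraph{How the two routes differ.} The paper never builds one coupling along the whole segment. For each fixed $s$ it uses the one-step coupling at base point $\gamma(s)$ only to bound the upper metric derivative of the curve of laws $s\mapsto\mu^{\gamma(s)}_t$ in $(\mathcal{P}_2,W_2)$: with $v=\gamma'(s)$,
$\limsup_{h\to0}W_2(\mu^{\gamma(s)+hv}_t,\mu^{\gamma(s)}_t)/|h|\le(\mathbb{E}|\widetilde{Y}^{v}_t|^2)^{1/2}\le\sqrt{c_2/c_1}\,e^{-\lambda t}|v|$.
It then integrates this bound with Lemma~\ref{lem:metric_lemma}. The triangle inequality for $W_2$ (gluing of couplings) does your chaining implicitly. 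The lemma needs only a pointwise limsup with a continuous majorant, plus $W_2$-continuity of the curve, which follows from the synchronous estimate. So the uniform-in-base-point control of the $o(h)$ remainder, which you single out as the main obstacle, never has to be proved. That step also needs no continuity of the derivative in the base point.

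\paragraph{What your route adds.} In exchange, you supply two things the paper leaves implicit. First, an explicit realization $O(x,z)=\exp(\Omega_x(\chi(z-x)))$ of the infinitesimal coupling: the statement only provides $\Omega$, while the paper's derivation of the coupled variational equation starts from a smooth $O$. Second, a concrete coupling of the endpoint laws that handles random initial data directly. The paper's proof stops at Dirac initial conditions and does not spell out the passage to general $\mu_0,\nu_0$.

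\paragraph{Points to tighten.}
\begin{enumerate}
\item $\sigma(t,z)O(x,z)$ is generally not globally Lipschitz when $\sigma$ grows linearly, because the derivatives of $O$ get multiplied by $\sigma(t,z)$. The augmented system is only locally Lipschitz with linear growth. Proposition~\ref{prop:msdiff} therefore needs a localized version, or a bound on $|\sigma\,\Omega_x|$; the paper glosses over the same point.
\item Letting $N\to\infty$ bounds $\limsup_N\mathbb{E}|X^{(N)}_t-X^{(0)}_t|^2$, and hence $W_2^2(\mu^{x_1}_t,\mu^{x_2}_t)$, not a fixed process.
\item In Step~3, exchanging this limit with integration against $\pi_0$ needs a domination. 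For example, $\mathbb{E}|X^{(N)}_t-X^{(0)}_t|^2\le C_t(1+|x_1|^2+|x_2|^2)$ holds and is $\pi_0$-integrable because $\mu_0,\nu_0\in\mathcal{P}_2$.
\end{enumerate}
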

\begin{proof}
By Dynkin's formula, with a localization procedure (Step 2 of the
proof of Theorem \ref{thm:main}), \eqref{eq:V-bounds-Omega} and
\eqref{eq:V-decay-Omega} together implies 
\[
\mathbb{E}|\widetilde{Y}^{v}_{t}|^{2}\le K^{2}e^{-2\lambda t}|v|^{2},\;\forall t\ge0.
\]

Since $\widetilde{Y}^{v}_{t}$ is the mean square derivative of $\widetilde{X}^{x+hv}_{t}$,
we deduce 
\[
\lim_{h\to0}\frac{\mathbb{E}|\widetilde{X}^{x+hv}_{t}-X^{x}_{t}|^{2}}{h^{2}}=\mathbb{E}|\widetilde{Y}^{v}_{t}|^{2}.
\]
Since $\widetilde{X}^{x+hv}_{t}\sim\mu^{x+hv}_{t}$, it follows that
\[
\limsup_{h\to0}\frac{W^{2}_{2}(\mu^{x+hv}_{t},\mu^{x}_{t})}{h^{2}}\le\mathbb{E}|\widetilde{Y}^{v}_{t}|^{2}\le K^{2}e^{-2\lambda t}|v|^{2}.
\]
Invoking Lemma \ref{lem:metric_lemma}, we arrive at the desired condition.
\end{proof}
\begin{rem}
The synchronous coupling result Theorem \ref{thm:main} is a special
case of Theorem \ref{thm:O} by taking $\Omega_{x}(v)=0$ or $O(x,y)=I$
for all $x,y$. 
\end{rem}
\begin{rem}
Although the finite coupling $O(x,z)$ provides a convenient probabilistic
interpretation, only its differential $\Omega_{x}(v)\in\mathfrak{so}(m)$
enters the contraction condition. The latter is therefore the essential
object in the subsequent development.
\end{rem}
The additional coupling freedom does not necessarily improve every
contraction condition. The geometric Brownian motion \eqref{eq:GBM}
provides a useful example. Let $V(x,y)=y^{\top}Py$. Then
\begin{align*}
\widetilde{A}_{\Omega}V(y) & =y^{\top}(A^{\top}P+PA)y\\
 & \qquad+\sum^{m}_{i=1}\left|C_{i}y+\sum^{m}_{j=1}C_{j}x(\Omega_{ji})\right|^{2}_{P}
\end{align*}
where $|z|_{P}=z^{\top}Pz$. At $x=0$, the coupling contribution
vanishes, and hence 
\[
\tilde{A}_{\Omega}V(y)=y^{\top}\left(A^{\top}P+PA+\sum^{m}_{i=1}C^{\top}_{i}PC_{i}\right)y.
\]
Consequently, the coupling cannot weaken the condition obtained in
the synchronous case. This is consistent with the necessity result
for \eqref{eq:GBM}. 

\subsection{Orthogonal coupling on Riemannian manifolds}\label{subsec:manifold-coupling}

We now extend the preceding construction to the Riemannian setting.
Consider the Stratonovich SDE \eqref{eq:manifold-stratonovich} on
a Riemannian manifold $(M,g)$.

A direct state-dependent orthogonal rotation of the diffusion vector
fields in the Stratonovich equation does not provide the desired coupling.
Indeed, if one formally writes 
\[
d\widetilde{X}_{t}=f(t,\widetilde{X}_{t})\,dt+\sigma(t,\widetilde{X}_{t})O(X_{t},\widetilde{X}_{t})\circ dB_{t},
\]
the state dependence of $O$ contributes additional drift terms when
the equation is converted to Itô form. Orthogonality therefore does
not, by itself, guarantee that the marginal law of $\widetilde{X}_{t}$
agrees with that of $X_{t}$.

However, we can turn the Stratonovich form into Itô form in local
coordinates and then apply the techniques in the previous section
for $V(t,x,v)=g_{x}(v,v)$. By doing so, we obtain the following 
geometric condition. 
\begin{thm}
\label{thm:coup-mani}Consider the system \eqref{eq:manifold-stratonovich}.
Suppose that Assumption \ref{ass:manifold} holds and that there exists
a mapping $\Omega_{x}:T_{x}M\to\mathfrak{so}(m)$, linear on $T_x M$, such that for some
$\lambda>0$,
\begin{equation}
g(\nabla_{v}\overline{f},v)+\frac{1}{2}\sum^{m}_{i=1}\left\{ -R(\sigma_{i},v,v,\sigma_{i})+|\eta_{i}|^{2}_{g}\right\} \le-\lambda|v|^{2}_{g}\label{eq:thm-coup-mani}
\end{equation}
for every $t,x\in M$ and $v\in T_{x}M$, where 
\[
\overline{f}=f+\frac{1}{2}\sum^{m}_{i=1}\nabla_{\sigma_{i}}\sigma_{i}
\]
 and 
\[
\eta_{i}(v)=\nabla_{v}\sigma_{i}+[\sigma\Omega_{x}(v)]_{i}.
\]
Then the system is $W_{2}$ contracting. 
\end{thm}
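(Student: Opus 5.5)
The plan is to repeat the proof of Theorem~\ref{thm:O} intrinsically: build a coupled copy whose infinitesimal variation satisfies a prolonged SDE on $TM$, show that $h(v)=g(v,v)$ decays along it at rate $2\lambda$, and then integrate with Lemma~\ref{lem:metric_lemma}. Since a Stratonovich rotation by $O$ produces extra drift, the coupled process will be defined in Itô form. Fix $(x,v)\in TM$ and a smooth orthogonal coupling $O$ with $O(y,y)=I$ and $D_2O(x,x)=\Omega_x$, defined near the diagonal and extended by $I$ elsewhere. Work in local charts and write the system in Itô form, $dX=\bar f^{\,\mathrm{coord}}dt+\sigma\,dB$. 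Define $\widetilde X$ in the chart by the same Itô equation with $\sigma$ replaced by $\sigma O(X,\widetilde X)$. Because $\widetilde B=\int O\,dB$ is a Brownian motion, $\widetilde X$ has the law $\mu^{\gamma(h)}_t$. As in \eqref{eq:Eu-O-couple-var}, its mean-square derivative $\widetilde Y$ satisfies the Itô variational equation with diffusion $G_i=\partial\sigma_i\,y+[\sigma\Omega_x(y)]_i$. The statement is local, so patching charts goes exactly as in the proof of Lemma~\ref{lem:mani-esti1}.

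The main step is to compute the generator $\widetilde{\mathcal A}_\Omega$ applied to $h(x,y)=g_x(y,y)$ and show that it equals twice the left side of \eqref{eq:thm-coup-mani}. I would do this at a point $x$ in normal coordinates, so that $g_{ab}=\delta_{ab}$, $\partial g=0$, and $\partial^2 g$ is expressed through curvature. The Itô drift of $X$ is the Euclidean expression of $f+\tfrac12\sum_i \partial\sigma_i\,\sigma_i$. At the center of normal coordinates this equals $\bar f$ with $\nabla_{\sigma_i}\sigma_i$. The Itô drift of $Y$ is $\partial(\text{Itô drift})\,y$. Its derivative contains terms $\tfrac12\partial^2\sigma_i(\sigma_i,y)+\tfrac12\partial\sigma_i\partial\sigma_i y$, and these must be reorganized using $\nabla_v(\nabla_{\sigma}\sigma)$ and the Christoffel-symbol derivatives. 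The second-order part of the generator has three pieces: $\tfrac12\sum_i|G_i|^2$ from $\partial_y^2 h$; cross terms $\sum_i \partial_x g(\sigma_i)(y,G_i)$, which vanish at the center; and $\tfrac12\sum_i\partial^2_x g(\sigma_i,\sigma_i)(y,y)$, which in normal coordinates equals $-\tfrac13\sum_i\big(R(\sigma_i,y,y,\sigma_i)+\ldots\big)$. Together with the $\partial\Gamma$ terms that appear when the coordinate drift derivative is converted into $\nabla_v\bar f$, these should add up to exactly $-\sum_i R(\sigma_i,y,y,\sigma_i)$. Finally, $G_i$ at the center equals $\nabla_y\sigma_i+[\sigma\Omega]_i=\eta_i$. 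The result should be
\[
\widetilde{\mathcal A}_\Omega h=2g(\nabla_v\bar f,v)+\sum_i\bigl(-R(\sigma_i,v,v,\sigma_i)+|\eta_i|_g^2\bigr)\le-2\lambda h .
\]
A sanity check is that setting $\Omega=0$ should reproduce the formula $\mathcal L_{f_0}g+\tfrac12\sum_i\mathcal L^2_{f_i}g$ of Theorem~\ref{thm:stra-Rie}.

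I expect this curvature bookkeeping to be the main obstacle. Getting the factor in front of $R$ right requires carefully combining the $\tfrac13R$ terms from $\partial^2 g$ with the $\partial\Gamma$ terms. A cleaner alternative I would try first is to use the Stratonovich form directly on $TM$ with horizontal/vertical splitting. The quantity $\nabla_{\sigma_i}\nabla_{v}\sigma_i-\nabla_v\nabla_{\sigma_i}\sigma_i$ produces $R(\sigma_i,v)\sigma_i$ via the commutator identity, with $\nabla_{[\sigma_i,\cdot]}$ corrections. The extra $\Omega$ terms enter only through $|\eta_i|^2$ and cross terms that cancel because $\Omega$ is skew: for example, $\sum_i g(\nabla_v\sigma_i,[\sigma\Omega]_i)$ pairs with the drift correction generated by the rotation.

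Once the generator inequality holds, the localized Dynkin argument of Theorem~\ref{thm:main} gives $\mathbb E|\widetilde Y_t|_g^2\le e^{-2\lambda t}|v|_g^2$. As in the proof of Theorem~\ref{thm:O}, this yields $\limsup_{h\to0}W_2(\mu^{\gamma(h)}_t,\mu^x_t)/|h|\le e^{-\lambda t}|\gamma'(0)|_g$. Applying Lemma~\ref{lem:metric_lemma} in $(\mathcal P_2(M),W_2)$ along any piecewise $C^1$ curve then gives $W_2(\mu^x_t,\mu^y_t)\le e^{-\lambda t}d_g(x,y)$. Gluing these point estimates through an optimal coupling of the initial laws, via measurable selection of optimal couplings, extends the estimate to general $\mu_0,\nu_0$.
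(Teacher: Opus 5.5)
Your proposal is correct and follows essentially the same route as the paper. The steps match: Itô form in local charts, then evaluation of $\widetilde{\mathcal A}_\Omega\, g(v,v)$ at the center of normal coordinates (where the $\partial^2 g$ and $\partial\Gamma$ contributions combine into $-\sum_i R(\sigma_i,v,v,\sigma_i)$ and $G_i=\eta_i$), then a localized Dynkin estimate patched across charts (which the paper carries out explicitly with chart-switching stopping times and the chart-invariance of $g(v,v)$), and finally the $W_2$ metric-derivative argument of Theorem~\ref{thm:O}.

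One bookkeeping slip: since $\partial_y^2\, g_x(y,y)=2g_x$, the $yy$ block of the generator contributes $\sum_i|G_i|_g^2$, not $\tfrac12\sum_i|G_i|_g^2$. Your final formula already uses the correct coefficient.
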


The proof of the theorem is provided in the Appendix.

\begin{rem}
For fixed $(x,v)$, the coupling freedom suggests the pointwise optimiza\-tion
problem 
\begin{equation}
\inf_{\Omega\in\mathfrak{so}(m)}\sum^{m}_{i=1}\left|\nabla_{v}\sigma_{i}+[\sigma\Omega]_{i}\right|^{2}_{g}.\label{eq:Omega-optimization}
\end{equation}
The ideal case is when one can choose $\Omega$ such that 
\[
\nabla_{v}\sigma_{i}+[\sigma\Omega]_{i}=0,\quad i=1,\ldots,m.
\]
In this situation, the variation of the diffusion frame is completely
removed by the coupling, and only the curvature contribution remains.
\end{rem}

\subsection{Ricci curvature effect and examples}\label{subsec:coupling-curvature}
\begin{cor}
\label{cor:Ricci}Consider the system \eqref{eq:manifold-stratonovich}
on $(M,g)$. Suppose that $m=n$ and that $\{\sigma_{i}\}^{n}_{i=1}$
forms an orthonormal frame on $M$. Then the system is $W_{2}$ contracting
if there exists a positive constant $\lambda$, such that
\[
g(\nabla_{v}\bar{f},v)-\frac{1}{2}{\rm Ric}(v,v)\le-\lambda|v|^{2}_{g}
\]
or every $x\in M$ and $v\in T_{x}M$, where $\bar{f}=f+\frac{1}{2}\sum^{m}_{i=1}\nabla_{\sigma_{i}}\sigma_{i}$. 
\end{cor}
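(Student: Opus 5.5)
The plan is to derive Corollary~\ref{cor:Ricci} from Theorem~\ref{thm:coup-mani} by choosing the coupling $\Omega_x(v)$ so that every $\eta_i(v)$ vanishes. The curvature sum should then collapse to the Ricci tensor.

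\emph{Step 1: the curvature term.} Since $\{\sigma_i\}_{i=1}^n$ is an orthonormal frame of $T_xM$ at each point, the definition of Ricci curvature as a trace gives
\[
\sum_{i=1}^{n}R(\sigma_i,v,v,\sigma_i)=\mathrm{Ric}(v,v),
\]
with the paper's sign convention for $R$. I will fix that convention so that the statement is consistent with the introduction's form \eqref{eq:intro-ricci}.

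\emph{Step 2: construct $\Omega$.} Define $\Omega_x(v)$ by its entries
\[
\Omega_x(v)_{ji}:=-g\bigl(\nabla_v\sigma_i,\sigma_j\bigr),\qquad i,j=1,\dots,n.
\]
This is linear in $v$ because $\nabla_v$ is. It is skew-symmetric by metric compatibility: differentiating $g(\sigma_i,\sigma_j)=\delta_{ij}$ in the direction $v$ gives $g(\nabla_v\sigma_i,\sigma_j)+g(\sigma_i,\nabla_v\sigma_j)=0$, so $\Omega_x(v)\in\mathfrak{so}(n)$. Moreover
\[
[\sigma\Omega_x(v)]_i=\sum_j\sigma_j\,\Omega_{ji}=-\sum_j g(\nabla_v\sigma_i,\sigma_j)\sigma_j=-\nabla_v\sigma_i,
\]
where the last equality expands $\nabla_v\sigma_i$ in the orthonormal frame. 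Hence $\eta_i(v)=0$ for every $i$.

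\emph{Step 3: conclude.} Substituting Steps 1 and 2 into \eqref{eq:thm-coup-mani}, its left-hand side becomes exactly $g(\nabla_v\bar f,v)-\tfrac12\mathrm{Ric}(v,v)$. The hypothesis of the corollary is therefore precisely the hypothesis of Theorem~\ref{thm:coup-mani}, which yields $W_2$ contraction.

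\emph{Main obstacle.} The computations above are routine. The delicate points are technical. First, the existence of a global orthonormal frame means $M$ is parallelizable, and the frame is assumed smooth enough that $\Omega$ meets whatever regularity the proof of Theorem~\ref{thm:coup-mani} requires; since $\Omega$ is built from $\nabla\sigma_i$, it inherits that regularity. Second, the sign convention for $R$ must match the one in Theorem~\ref{thm:coup-mani}. I would verify this on the round sphere, where the expected rate is $(n-2)/2$ as announced in Example~\ref{exa:Sphere}.
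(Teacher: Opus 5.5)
Your proposal is correct and is essentially the paper's proof. The paper makes the same choice of coupling, $\Omega_x(v)_{ij}=g(\nabla_v\sigma_i,\sigma_j)$, which is your $\Omega_x(v)_{ji}=-g(\nabla_v\sigma_i,\sigma_j)$ up to skew-symmetry; it gets skew-symmetry from metric compatibility and $\eta_i=0$ from the orthonormal expansion of $\nabla_v\sigma_i$, then invokes Theorem~\ref{thm:coup-mani}, with $\sum_i R(\sigma_i,v,v,\sigma_i)=\mathrm{Ric}(v,v)$ being literally the appendix definition of $\mathrm{Ric}$, so no separate sign check is needed (and the sphere could not serve as one anyway, since the fields $P_xe_i$ are not an orthonormal frame on $S^{n-1}$).
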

\begin{proof}
For $v\in T_{x}M$, define $\omega_{ji}(x,v)$ as follows 
\[
\omega_{ji}(x,v)=g(\nabla_{v}\sigma_{i},\sigma_{j})
\]
Then $\omega_{ji}(x,v)=-\omega_{ij}(x,v)$ since $g(\sigma_{i},\sigma_{j})=\delta_{ij}$
and that
\[
g(\nabla_{v}\sigma_{i},\sigma_{j})+g(\sigma_{i},\nabla_{v}\sigma_{j})=0.
\]
Using this notation, 
\begin{align*}
\nabla_{v}\sigma_{i} & =\sum^{n}_{j=1}g(\nabla_{v}\sigma_{i},\sigma_{j})\sigma_{j}=\sum^{n}_{j=1}\omega_{ji}\sigma_{j}.
\end{align*}
Let $(\Omega_{x}(v)_{ij})=\omega_{ji}$. Then
\begin{align*}
\eta_{i} & =\nabla_{v}\sigma_{i}+[\sigma\Omega_{x}(v)]_{i}\\
 & =\sum^{n}_{j=1}\omega_{ji}\sigma_{j}+\sum^{n}_{j=1}\sigma_{j}(\Omega_{x}v)_{ji}\\
 & =\sum^{n}_{j=1}\omega_{ji}\sigma_{j}+\sum^{n}_{j=1}\sigma_{j}\omega_{ij}\\
 & =0.
\end{align*}
The conclusion now follows from Theorem \ref{thm:coup-mani}. 
\end{proof}

Corollary~\ref{cor:Ricci} exhibits directly the connection with
the classical Ricci/Bakry-Émery curvature criterion for Wasserstein
contraction \cite{bakry1985diffusions,vonrenesse2005transport,kuwada2010duality}.

We now revisit the two examples from Section~\ref{subsec:mani-no-coup}.
\begin{example}[Revisit of Examples \ref{exa:Sphere} ]
Consider the system \eqref{eq:exa:-Sn}. Recall
\(
\sigma_{i}=e_{i}-x_{i}x
\),
and we can compute
\[
\nabla_{v}\sigma_{i}=\sigma(x)\frac{\partial\sigma_{i}}{\partial x}v=-x_{i}v,
\]
where $v$ is a tangent vector at $x$, i.e., $x^{\top}v=0$. Hence
\begin{align*}
\nabla_{\sigma_{i}}\sigma_{i} & =-x_{i}(e_{i}-x_{i}x).
\end{align*}
Thus $\overline{f}=-\frac{1}{2}\sum^{n}_{i=1}\nabla_{\sigma_{i}}\sigma_{i}=\frac{1}{2}\sum^{n}_{i=1}x_{i}(e_{i}-x_{i}x)=0$. 

Choose $\Omega_{x}(v)=vx^{\top}-xv^{\top}$, we have 
\begin{align*}
\sigma\Omega_{x}(v) & =P_{x}\Omega_{x}(v)\\
 & =(I-xx^{\top})(vx^{\top}-xv^{\top})\\
 & =vx^{\top}-xv^{\top}+xv^{\top}\\
 & =vx^{\top}.
\end{align*}
Under this choice, $\eta_{i}=0$ in \eqref{eq:thm-coup-mani}. For
the curvature term, since $S^{n-1}$ has sectional curvature $1$,
we have
\begin{align*}
R(\sigma_{i},v,v,\sigma_{i}) & =|\sigma_{i}|^{2}|v|^{2}-(\sigma^{\top}_{i}v)^{2}\\
 & =(1-x^{2}_{i})|v|^{2}-v^{2}_{i}.
\end{align*}
Summing over $i$ gives 
\[
\sum^{n}_{i=1}R(\sigma_{i},v,v,\sigma_{i})=(n-2)|v|^{2}.
\]
Thus the Brownian motion on $S^{n-1}$ is $W_{2}$ contracting as
long as $n\ge3$ with contraction rate $\frac{n-2}{2}$. Thus orthogonal
coupling improves the synchronous estimate of Example~\ref{exa:Sphere}
from $(n-3)/2$ to $(n-2)/2$ and also recovers contraction in the
case $S^{2}$.
\end{example}
\begin{example}[Revisit of Example \ref{exa:killing}]
Consider again \eqref{eq:SO(3)}. Let $\sigma_{i}(R)=RE_{i}$ where
$E_{i}$ are the standard orthonormal basis of $\mathfrak{so}(3)$.
As a consequence $\{\sigma_{1},\sigma_{2},\sigma_{3}\}$ form an orthonormal
frame on $SO(3)$. Since $g$ is bi-invariant, we have
\[
\nabla_{\sigma_{i}}\sigma_{i}=\frac{1}{2}[\sigma_{i},\sigma_{i}]=0.
\]
Thus $\overline{f}=0$. It is known that the Ricci curvature of $SO(3)$
is $\frac{1}{2}g$. Hence the system is $W_{2}$ contractive with
rate $\frac{1}{4}$. 

This contrasts with Example~\ref{exa:killing}, where synchronous
coupling does not certify any positive contraction rate. %
\end{example}

\begin{example}[Diffusion on SPD]
\label{exa:SPD}
Let $\mathbb{S}^n_+$ be the space of real positive definite matrices
of dimension $n\times n$. 
Equip $\mathbb{S}^n_+$ with the affine-invariant metric
\[
    g_X(U,V)
    =
    \operatorname{tr}(X^{-1}UX^{-1}V).
\]
The manifold $\mathbb{S}^n_+$ is complete and has nonpositive
sectional curvature \cite{thanwerdas2023}.

Fix $X_\star\in \mathbb{S}^n_+$ and define
\[
    \Phi(X)
    :=
    \frac12 d_g^2(X,X_\star)
\]where $d_g$ is the Riemannian distance.
Let $\{\sigma_i\}_{i=1}^{d}$, $d=n(n+1)/2$, be a smooth orthonormal
frame on $\mathbb{S}^n_+$. Consider the system
\begin{equation}
    dX_t = (-k  \operatorname{grad}\Phi(X_t) + u) dt
    + \sum_{i=1}^{d} \sigma_i(X_t)\circ dB_t^i,
\label{eq:SPD-Stratonovich}
\end{equation}
where $k>0$ is a constant and $u$ is an input to be determined.

By taking $u= - \frac{1}{2}\sum_{i=1}^d \nabla_{\sigma_i}{\sigma_i}$, we have
\[
\begin{aligned}
    \bar f = -k \operatorname{grad}\Phi.
\end{aligned}
\]
Since $(\mathbb{S}^n_+,g)$ has nonpositive sectional curvature,
the squared distance is strongly geodesically convex, i.e., $ \operatorname{Hess}\Phi\ge g$.
Consequently,
\begin{equation}
    g(\nabla_v\bar f,v)
    =
    -k \operatorname{Hess}\Phi(v,v)
    \le
    -|v|_g^2.
    \label{eq:SPD-drift}
\end{equation}

For the affine-invariant metric, it can be shown that
\[
    \operatorname{Ric}_X(v,v)
    =
    -\frac n4
    \| v\|^2_{g_X} + \frac{1}{4}({\rm tr} (X^{-1}v))^2 \ge -\frac{n}{4}\|v\|_{g_X}^2.
\]
It follows that
\[
    g(\nabla_v\bar f,v)
    -
    \frac{1}{2} \operatorname{Ric}(v,v)
    \le
    -
    \left(
        k -\frac{n}{8}
    \right)|v|_g^2.
\]
Thus the system \eqref{eq:SPD-Stratonovich} is $W_2$ contracting if 
$k > \frac{n}{8}$.
\end{example}

In the next section, we use the differential coupling for a control
problem on $S^{2}$. There, a smooth deterministic feedback cannot
globally stabilize a prescribed target due to topological obstructions.
A suitably designed state-dependent diffusion, however, yields a globally
$W_{2}$-contracting closed-loop system whose invariant distribution
can be made arbitrarily concentrated around the target.

\section{Case Study: Noise-assisted Global Stabilization on a Compact Manifold}

In this section, we illustrate a control implication of Wasserstein contraction
on compact manifolds. Smooth deterministic feedback on a compact manifold
is generally subject to topological obstructions to global stabilization.
Noise provides a different possibility: rather than stabilizing to
a given point, one may stabilize the probability law globally to an
invariant distribution concentrated around the target.

We illustrate this phenomenon on the unit sphere $S^{2}$. Let $x_{\star}\in S^{2}$
denote the target and define 
\[
h(x):=x^{\top}_{\star}x,\quad P_{x}:=I-xx^{\top}.
\]
Consider first the deterministic closed-loop system 
\begin{equation}
\dot{x}=kP_{x}x_{\star},\label{eq:sphere-det-control}
\end{equation}
where $k$ is a positive constant. It can be easily shown that the
point $x_{\star}$ is asymptotically stable, but its antipodal point
$-x_{\star}$ is also an equilibrium. Hence $u=kP_{x}x_{\star}$ does
not globally stabilize $x_{\star}$.

We now add a state-dependent diffusion whose intensity is small near
the target and large near the antipodal point. 

For $\varepsilon>0$, consider the Stratonovich SDE 
\begin{equation}
\begin{aligned}dX_{t}={} & kP_{X_{t}}x_{\star}dt+a_{\varepsilon}(X_{t})P_{X_{t}}\circ dB_{t},\\
a_{\varepsilon}(x):={} & \sqrt{k\bigl(1+\varepsilon-h(x)\bigr)},
\end{aligned}
\label{eq:noise-assisted-sphere}
\end{equation}
where $B_{t}$ is a standard Brownian motion in $\mathbb{R}^{3}$.
Here the diffusion term $\int^{t}_{0}a_{\epsilon}P_{X_{t}}\circ dB_{t}$
can be seen as a random input. 

Notice that 
\[
a^{2}_{\varepsilon}(x_{\star})=k\varepsilon,\quad a^{2}_{\varepsilon}(-x_{\star})=k(2+\varepsilon).
\]
Thus the noise is weak near the desired state, while remaining strong
near the undesired equilibrium. We have the following result.
\begin{prop}[Noise-assisted global stabilization]
 \label{prop:noise-assisted-sphere} For each $\varepsilon>0$,
the stochast\-ic system \eqref{eq:noise-assisted-sphere} is globally
$W_{2}$ contracting. Moreover,

(i) If $\mu_{t}$ and $\nu_{t}$ are the laws generated from arbitrary
initial probability measures $\mu_{0}$ and $\nu_{0}$, then 
\begin{equation}
W_{2}(\mu_{t},\nu_{t})\le e^{-\lambda_{\varepsilon}t}W_{2}(\mu_{0},\nu_{0}),\quad\lambda_{\varepsilon}=\frac{k}{4}+\frac{k\varepsilon}{2}.\label{eq:noise-assisted-contraction}
\end{equation}

(ii) The system \eqref{eq:noise-assisted-sphere} admits a unique
invariant probability measure 
\begin{equation}
\pi_{\varepsilon}(dx)=\frac{1}{Z_{\varepsilon}}\bigl(\varepsilon+1-x^{\top}_{\star}x\bigr)^{-5/2}d{\rm vol}(x),\label{eq:noise-assisted-invariant}
\end{equation}
where $Z_{\varepsilon}$ is the normalization constant. 

(iii) As $\epsilon\to0$,
\begin{equation}
W_{2}(\pi_{\varepsilon},\delta_{x_{\star}})=O(\sqrt{\varepsilon}).\label{eq:pi-concentration}
\end{equation}
Consequently, the invariant distribution can be made arbitrarily concentrated
around $x_{\star}$ while preserving a strictly positive global contraction
rate.
\end{prop}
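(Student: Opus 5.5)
The plan is to obtain (i) from Theorem~\ref{thm:coup-mani}, applied on $S^{2}\subset\mathbb{R}^{3}$ with the induced metric and the three diffusion fields $\sigma_{i}(x)=a_{\varepsilon}(x)P_{x}e_{i}$, $i=1,2,3$. These fields are not an orthonormal frame, so Corollary~\ref{cor:Ricci} does not apply directly. Write $a=a_{\varepsilon}$. The Levi-Civita connection is the tangential projection of the ambient derivative, so for $v\in T_{x}S^{2}$,
\[
\nabla_{v}\sigma_{i}=(da\cdot v)\,P_{x}e_{i}-a\,x_{i}v .
\]
\emph{Coupling.} As in the revisited sphere example, I choose $\Omega_{x}(v)=vx^{\top}-xv^{\top}$. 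Then $\sigma\Omega_{x}(v)=a\,vx^{\top}$, which cancels the term $-a x_{i}v$ and leaves $\eta_{i}=(da\cdot v)P_{x}e_{i}$. Hence
\[
\sum_{i}|\eta_{i}|^{2}_{g}=2(da\cdot v)^{2}.
\]
I will not try to remove this remainder: the matrix $P\Omega'$ would have to equal a multiple of the identity on $T_{x}S^{2}$, which a skew $\Omega'$ cannot produce.

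\emph{Curvature term.} Sectional curvature is $1$, so
\[
\sum_{i}R(\sigma_{i},v,v,\sigma_{i})=a^{2}\sum_{i}\big((1-x_{i}^{2})|v|^{2}-v_{i}^{2}\big)=a^{2}|v|^{2}.
\]

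\emph{Drift term.} We have $\sum_{i}\nabla_{\sigma_{i}}\sigma_{i}=a\,\mathrm{grad}\,a-a^{2}P_{x}x=\tfrac12\mathrm{grad}(a^{2})=-\tfrac{k}{2}P_{x}x_{\star}$. Therefore $\bar f=\tfrac{3k}{4}P_{x}x_{\star}=\tfrac{3k}{4}\mathrm{grad}\,h$. On the sphere $\mathrm{Hess}\,h=-h\,g$, so $g(\nabla_{v}\bar f,v)=-\tfrac{3k}{4}h|v|^{2}$.

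\emph{Remainder term.} From $a^{2}=k(1+\varepsilon-h)$ we get $da\cdot v=-k(x_{\star}^{\top}v)/(2a)$. The tangential part of $x_{\star}$ has norm $\sqrt{1-h^{2}}$, so $(x_{\star}^{\top}v)^{2}\le(1-h^{2})|v|^{2}$, and
\[
\tfrac12\sum_{i}|\eta_{i}|^{2}=\frac{k(x_{\star}^{\top}v)^{2}}{4(1+\varepsilon-h)}\le\frac{k(1+h)}{4}|v|^{2}.
\]

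\emph{Assembly.} The left side of \eqref{eq:thm-coup-mani} is then bounded by
\[
\Big(-\tfrac{3k}{4}h-\tfrac{k}{2}(1+\varepsilon-h)+\tfrac{k}{4}(1+h)\Big)|v|^{2}=-\Big(\tfrac{k}{4}+\tfrac{k\varepsilon}{2}\Big)|v|^{2}.
\]
This is the rate $\lambda_{\varepsilon}$ claimed in \eqref{eq:noise-assisted-contraction}, and the constant is $1$ because $V=g$ exactly.

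For (ii), $S^{2}$ is compact, so $(\mathcal{P}(S^{2}),W_{2})$ is complete. The time-$t$ transition map $P_{t}^{*}$ is a strict contraction by (i), so Banach's fixed point theorem gives a unique fixed point of $P_{1}^{*}$. By the semigroup property and uniqueness it is invariant for all $t$, and every invariant law must coincide with it. To identify it, I write the generator as $\mathcal{L}=\tfrac12 a^{2}\Delta+\langle b,\mathrm{grad}\,\cdot\rangle$. Here I use that $\sum_{i}(P e_{i})^{2}$ is the Laplace--Beltrami operator and that the extra first-order term is $\tfrac14\mathrm{grad}(a^{2})$, which gives $b=\tfrac{3k}{4}\mathrm{grad}\,h$. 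I then check the zero-flux condition $\tfrac12\mathrm{grad}(a^{2}\rho)=b\rho$ for $\rho=u^{-5/2}$ with $u=1+\varepsilon-h$. Indeed $a^{2}\rho=k u^{-3/2}$, and its half-gradient is $\tfrac{3k}{4}u^{-5/2}\mathrm{grad}\,h$, as required. Thus $\mathcal{L}^{*}\rho=0$.

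For (iii), the only coupling of $\pi_{\varepsilon}$ with $\delta_{x_{\star}}$ is the product, so $W_{2}^{2}(\pi_{\varepsilon},\delta_{x_{\star}})=\int d_{g}^{2}(x,x_{\star})\,d\pi_{\varepsilon}$. I use $d_{g}^{2}\le\tfrac{\pi^{2}}{4}|x-x_{\star}|^{2}=\tfrac{\pi^{2}}{2}(1-h)$. On $S^{2}$ the pushforward of volume under $h$ is $2\pi\,dh$, so with $s=1-h\in[0,2]$,
\[
\int(1-h)\,d\pi_{\varepsilon}=\frac{\int_{0}^{2}s(\varepsilon+s)^{-5/2}ds}{\int_{0}^{2}(\varepsilon+s)^{-5/2}ds}.
\]
The numerator is $O(\varepsilon^{-1/2})$ and the denominator is of order $\varepsilon^{-3/2}$, so the ratio is $O(\varepsilon)$ and $W_{2}=O(\sqrt{\varepsilon})$.

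The main obstacle is the verification in (i): getting the exact bookkeeping of $\bar f$ and of the non-cancellable term $\eta_{i}=(da\cdot v)Pe_{i}$. The rate only closes because the bound $(1-h^{2})/(1+\varepsilon-h)\le 1+h$ matches the specific choice of $a_{\varepsilon}^{2}$. I would double-check this step first, in particular the factor $\tfrac12$ conventions in \eqref{eq:thm-coup-mani}.
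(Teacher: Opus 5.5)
Your proposal is correct and follows the paper's proof essentially step for step. You use the same coupling $\Omega_x(v)=vx^\top-xv^\top$, the same residual $\eta_i=(da_\varepsilon\cdot v)P_xe_i$ with the bound $(1-h^2)/(1+\varepsilon-h)\le 1+h$, the same zero-flux check of $\mathcal{L}^*\rho_\varepsilon=0$, and the same chord-length reduction to a ratio of one-dimensional integrals. The one addition is your Banach fixed-point argument in (ii), which makes explicit the existence and uniqueness of the invariant law; the paper asserts uniqueness and only verifies the stationarity equation.
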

\begin{rem}
Proposition~\ref{prop:noise-assisted-sphere} separates convergence
and accuracy in a useful way. The parameter $k$ determines the time
scale of convergence, while $\varepsilon$ determines the amount of
noise remaining near the target. In particular, 
\[
\lim_{\varepsilon\to0}\lambda_{\varepsilon}=\frac{k}{4}>0,\quad\lim_{\varepsilon\to0}W_{2}(\pi_{\varepsilon},\delta_{x_{\star}})=0.
\]
Thus the invariant distribution can be made arbitrarily concentrated
around the target without causing the guaranteed global contraction
rate to vanish.
\end{rem}

The proof of the proposition is included in Appendix.

We simulate the system \eqref{eq:noise-assisted-sphere} choose $k=5$, $\epsilon=10^{-3}$, and $x_{*}=[0,0,1]^{\top}$.
The simulation interval is $[0,2]$. The results are shown in Fig. \ref{fig:sphere} and Fig.
\ref{fig:samplepaths}.

\begin{figure}[ht]
\begin{centering}
\includegraphics[scale=0.5]{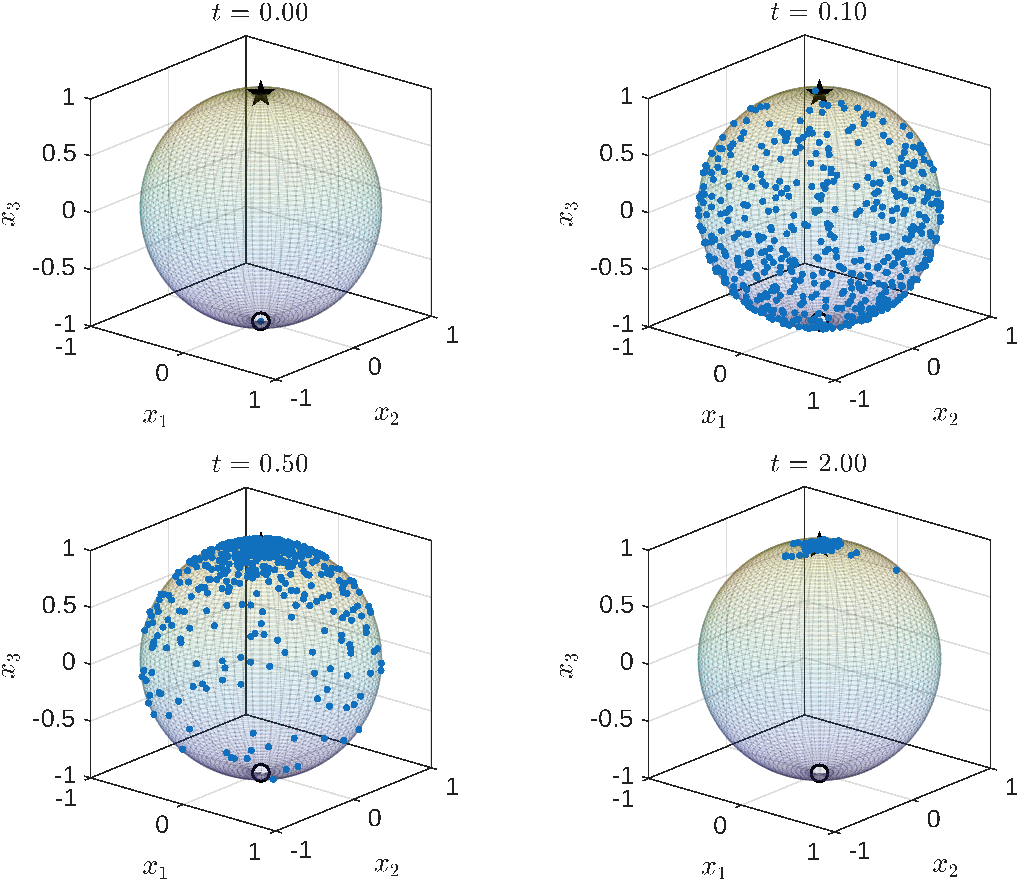}
\par\end{centering}
\caption{Evolution of the empirical probability distribution under the noise-assisted
feedback. All trajectories are initialized at the antipodal point
$-x_{\star}$. The distribution escapes from the undesired deterministic
equilibrium and rapidly concentrates near the target $x_{\star}$. \label{fig:sphere}}
\end{figure}

\begin{figure}[ht]
\begin{centering}
\includegraphics[scale=0.5]{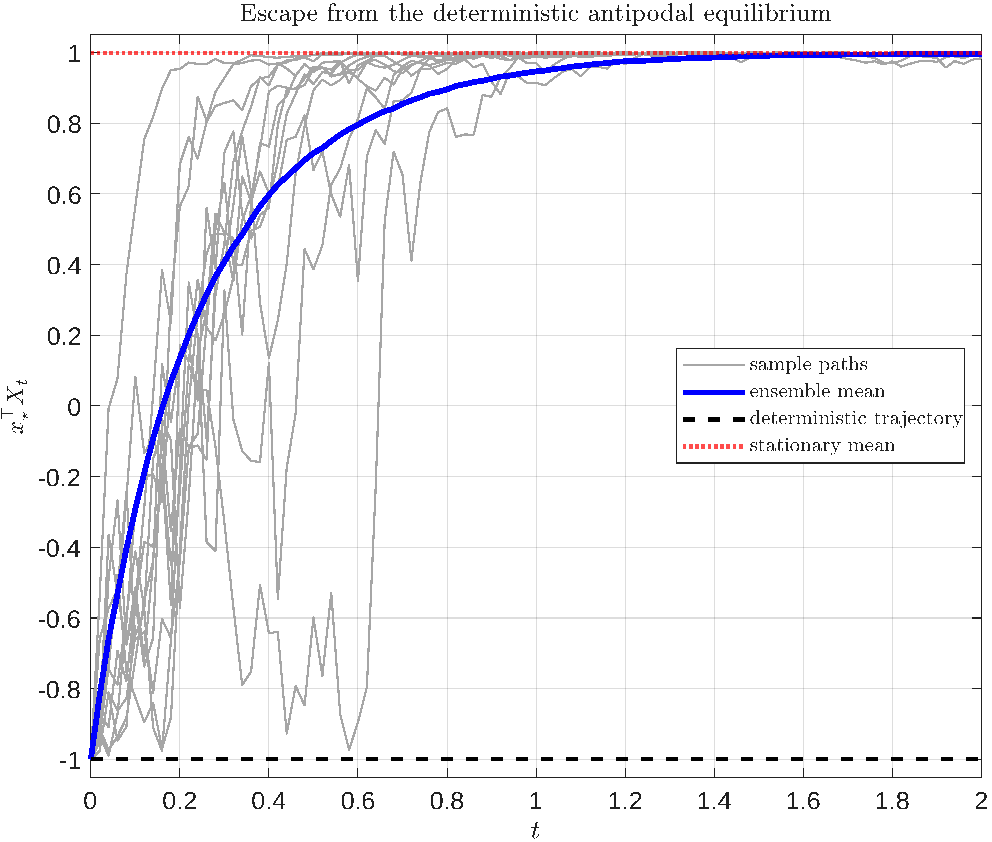}
\par\end{centering}
\caption{The deterministic trajectory initialized at $-x_{\star}$ remains
at $x^{\top}_{\star}x=-1$, whereas the stochastic trajectories escape
from the antipodal equilibrium. The ensemble mean approaches the stationary
mean associated with $\pi_{\varepsilon}$. \label{fig:samplepaths}}

\end{figure}

\section{Conclusion}


This paper developed a contraction-analysis approach to Wassers\-tein
contraction of stochastic systems. The main idea is to retain the
differential viewpoint underlying deterministic contraction theory.
In Euclidean
space, this yields criteria based on general Finsler-Lyapunov
functions, leading to both $W_1$ and $W_2$ contraction results. 

A feature that distinguishes the stochastic problem from its
deterministic counterpa\-rt is the freedom to couple the driving noises.
Synchronous coupling provides a natural starting point, but it can be
conservative. The orthogonal differential coupling
used in this paper incorporates this freedom directly into the
variational dynamics. On manifolds, the resulting criterion 
recovers the familiar curvature-based
contraction condition in the orthonormal-frame case. The examples on
the sphere and on Lie groups illustrate that this additional coupling
freedom can reveal contraction that is hidden by synchronous coupling.

The control example further suggests that stochasticity need not be
viewed only as a disturbance. By shaping the diffusion as
a control input, noise can be used to modify the global behavior of
the probability law. 

The results also leave several directions open. For example, 
how to use Wasserstein contraction results
for observer design, in which case, non-synchronous noise has to be taken
into consideration.


\section*{Acknowledgement}

The author would like to thank Sunghyun Koo for inspiring discussions.
Large language models were used during the preparation of this manuscript
to assist with wording, language editing, and proofreading.

\appendix

\section{Appendix}

\label{app:msdiff} 

\subsection{Some Riemannian geometry}

Let $(M,g)$ be a Riemannian metric, and $\nabla$ the Levi-Civita
connection. Let $X,Y,Z,W$ be vector fields on $M$. The curvature
tensor is defined as
\[
R(X,Y)Z=\nabla_{X}\nabla_{Y}Z-\nabla_{Y}\nabla_{X}Z-\nabla_{[X,Y]}Z.
\]
We also denote 
\[
R(X,Y,Z,W)=g(R(X,Y)Z,W).
\]
The Ricci tensor is defined as
\[
{\rm Ric}(V,W)=\sum^{n}_{i=1}R(E_{i},W,V,E_{i})
\]
where $\{E_{i}\}$ is an orthonormal basis. 

Let $u,v\in T_{x}M$. The metric Lie derivative $\mathcal{L}_{X}g$
at $x$ is a defined as 
\begin{equation} \label{eq:metric-Lie}
(\mathcal{L}_{X}g)(u,v)=\left.\frac{d}{dt}\right|_{t=0}g(D\phi_{t}(u),D\phi_{t}(v))
\end{equation}
where $\phi_{t}$ is the flow of the vector field $X$. $X$ is called
a Killing field if $\mathcal{L}_{X}g=0$. 

For a scalar valued function $\varphi:M\to\mathbb{R}$, denote $\nabla\varphi$
as the gradient of $\varphi$, i.e., it is the unique vector such
that $g(\nabla\varphi,v)=d\varphi(v)$ for any $v\in TM$. Denote
${\rm Hess}\,\varphi$ as the Hessian of $\varphi$, which is defined
as 
\[
{\rm Hess}\,\varphi(v,w)=g(\nabla_{v}\nabla\varphi,w).
\]

\subsection{A Lemma on Metric Derivative}
\begin{lem}
\label{lem:metric_lemma}Let $(X,d)$ be a metric space and $\gamma:[0,1]\to X$
be continuous. Suppose that $v:[0,1]\to X$ is continous and
\[
\limsup_{h\to0}\frac{d(\gamma(s+h),\gamma(s))}{|h|}\le v(s)
\]
for every $s\in(0,1)$. Then
\[
d(\gamma(t),\gamma(s))\le\int^{t}_{s}v(r)dr,\quad0\le s\le t\le1.
\]
\end{lem}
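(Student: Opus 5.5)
The plan is to obtain the metric-derivative inequality by a discretization argument, in the spirit of the Banach-space fundamental theorem of calculus but using only the triangle inequality. Fix $0\le s<t\le1$ and $\epsilon>0$. I define
\[
\phi(r):=d(\gamma(r),\gamma(s))-\int_s^r v(u)\,du-\epsilon(r-s),\qquad r\in[s,t].
\]
This function is continuous (I read $v$ as a continuous real-valued function) and satisfies $\phi(s)=0$. The goal is to show $\phi(t)\le 0$, and then let $\epsilon\to0$.

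The mechanism is a continuity (sup) argument. Let $r^*:=\sup\{r\in[s,t]:\phi\le0 \text{ on } [s,r]\}$. By continuity $\phi(r^*)\le0$. Suppose $r^*<t$.

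First take the case $r^*\in(0,1)$, which covers every case except $r^*=s=0$. The hypothesis at $r^*$ gives some $\delta>0$ such that for $0<h<\delta$,
\[
d(\gamma(r^*+h),\gamma(r^*))\le (v(r^*)+\epsilon/2)h.
\]
Continuity of $v$ gives $\int_{r^*}^{r^*+h}v\ge (v(r^*)-\epsilon/2)h$ for small $h$. The triangle inequality then yields
\[
\phi(r^*+h)\le \phi(r^*)+(v(r^*)+\epsilon/2)h-(v(r^*)-\epsilon/2)h-\epsilon h\le 0.
\]
This contradicts maximality, so $r^*=t$.

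The only delicate step is the endpoint: the hypothesis is assumed only for $s\in(0,1)$, so at $r^*=0$ no derivative bound is available. I would handle this by running the argument on $[s',t]$ with $s'\in(0,t)$. This gives $d(\gamma(t),\gamma(s'))\le\int_{s'}^t v+\epsilon(t-s')$. Letting $s'\downarrow s$ and using continuity of $\gamma$ and integrability of $v$ recovers the bound on $[s,t]$. The same device covers $t=1$, since the argument only needs the hypothesis at points $r^*<t\le1$.

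Finally, sending $\epsilon\to0$ gives $d(\gamma(t),\gamma(s))\le\int_s^t v(r)\,dr$. This is an elementary argument, and I expect no real obstacle beyond the endpoint bookkeeping described above.
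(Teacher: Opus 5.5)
The paper states this lemma in the appendix without proof, so there is nothing of the author's to compare against. Your proposal supplies a proof, and it is correct.

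Reading $v$ as a continuous real-valued function is the right fix for the typo $v:[0,1]\to X$ in the statement. The key step is the triangle-inequality estimate
$\phi(r^*+h)\le\phi(r^*)+d(\gamma(r^*+h),\gamma(r^*))-\int_{r^*}^{r^*+h}v(u)\,du-\epsilon h$.
It reduces the metric statement to the real-variable fact that a continuous function $F(r)=d(\gamma(r),\gamma(s))$ with upper right Dini derivative at most $v$ increases by at most $\int v$. Your sup argument is a standard proof of that fact.

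Your endpoint bookkeeping is also sound. Since $r^*<t\le1$, the only problematic case is $r^*=s=0$. Restarting at $s'\in(0,t)$ and letting $s'\downarrow s$ needs only continuity of $\gamma$ and integrability of $v$, and nothing extra is required at $t=1$.

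The one place the argument genuinely uses the regularity of $v$ is the lower bound $\int_{r^*}^{r^*+h}v\ge(v(r^*)-\epsilon/2)h$ for small $h>0$. Lower semicontinuity at $r^*$ would suffice there. This matches what the paper checks when it invokes the lemma: in the proof of Lemma~\ref{lem:mani-esti2} it explicitly notes that the right-hand side is continuous in $s$, and in Theorem~\ref{thm:O} the bound is constant along the segment.
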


\subsection{Proof of Proposition \ref{prop:msdiff}}

Fix $x,y\in\R^{n}$ and define the difference quotient 
\begin{equation}
Z^{h}_{t}:=\frac{X^{x+hy}_{t}-X^{x}_{t}}{h}.
\end{equation}
By the mean-value theorem, 
\begin{equation}
dZ^{h}_{t}=A^{h}_{t}Z^{h}_{t}\,dt+\sum^{m}_{i=1}G^{h}_{i,t}Z^{h}_{t}\,dB^{i}_{t},\quad Z^{h}_{0}=y,\label{eq:Zh}
\end{equation}
where 
\begin{align}
A^{h}_{t} & :=\int^{1}_{0}D_{x}f\!\left(t,X^{x}_{t}+r(X^{x+hy}_{t}-X^{x}_{t})\right)dr,\label{eq:Ah}\\
G^{h}_{i,t} & :=\int^{1}_{0}D_{x}\sigma_{i}\!\left(t,X^{x}_{t}+r(X^{x+hy}_{t}-X^{x}_{t})\right)dr.\label{eq:Gh}
\end{align}
Let $Y^{x,y}$ denote the solution of \eqref{eq:Ito_SDE_var}. Since
the coefficients of that linear SDE are bounded on $[0,T]$, 
\begin{equation}
\E\sup_{0\le t\le T}|Y^{x,y}_{t}|^{2}<\infty.\label{eq:Ybound}
\end{equation}
Set $R^{h}_{t}:=Z^{h}_{t}-Y^{x,y}_{t}$. Subtracting the two equations
gives 
\begin{align}
dR^{h}_{t}={} & A^{h}_{t}R^{h}_{t}\,dt+\sum^{m}_{i=1}G^{h}_{i,t}R^{h}_{t}\,dB^{i}_{t}\nonumber \\
 & +(A^{h}_{t}-A_{t})Y^{x,y}_{t}\,dt+\sum^{m}_{i=1}(G^{h}_{i,t}-G_{i,t})Y^{x,y}_{t}\,dB^{i}_{t},\label{eq:Rh}
\end{align}
where $A_{t}=A(t,X^{x}_{t})$ and $G_{i,t}=G_{i}(t,X^{x}_{t})$. Standard
Burkholder--Davis--Gundy and Grönwall estimates for linear SDEs
give 
\begin{align}
\E\sup_{0\le t\le T}|R^{h}_{t}|^{2}\le{} & C_{T}\E\int^{T}_{0}|(A^{h}_{t}-A_{t})Y^{x,y}_{t}|^{2}\,dt\nonumber \\
 & +C_{T}\E\int^{T}_{0}\sum_{i}|(G^{h}_{i,t}-G_{i,t})Y^{x,y}_{t}|^{2}\,dt.\label{eq:Rbound}
\end{align}
On the other hand, the usual continuous-dependence estimate for \eqref{eq:ItoSDE}
yields 
\begin{equation}
\E\sup_{0\le t\le T}|X^{x+hy}_{t}-X^{x}_{t}|^{2}\le C_{T}h^{2}|y|^{2}.\label{eq:Xcontinuous}
\end{equation}
Hence $X^{x+hy}_{t}\to X^{x}_{t}$ in probability uniformly on $[0,T]$.
By the continuity of $D_{x}f$ and $D_{x}\sigma_{i}$, 
\begin{equation}
A^{h}_{t}\to A_{t},\quad G^{h}_{i,t}\to G_{i,t}
\end{equation}
in probability for almost every $t$. The derivative bound \eqref{eq:derivative-bound},
together with \eqref{eq:Ybound}, makes the integrands on the right-hand
side of \eqref{eq:Rbound} uniformly integrable. Vitali's theorem
and dominated convergence in time therefore imply that the right-hand
side of \eqref{eq:Rbound} tends to zero. This proves \eqref{eq:msdiff}.

Finally, applying the same continuous-dependence estimate to two copies
of the linear variational equation \eqref{eq:Ito_SDE_var}, together
with continuous dependence of $X^{x}$ on $x$, proves continuity
of $(x,y)\mapsto Y^{x,y}$ in $L^{2}(\Omega;C([0,T];\R^{n}))$. \hfill{}$\square$

\subsection{Proof of Theorem \ref{thm:coup-mani}}

Since Stratonovich calculus obeys the ordinary chain rule, in local
coordinates, \eqref{eq:manifold-stratonovich} takes the form
\begin{align}
dX_{t} & =\tilde{f}dt+\sum_{i}\sigma_{i}dB^{i}_{t}\label{eq:local-stra}
\end{align}
where $\tilde{f}=f+\frac{1}{2}\sum\frac{\partial\sigma_{i}}{\partial x}\sigma_{i}$.%

Now, we try to apply Theorem \ref{thm:O} to this system for $V(t,x,v)=g(v,v)=g_{ij}(x)v^{i}v^{j}$.
We verify that, in local coordinates, $\widetilde{A}_{\Omega}$ acting
on $V$ along the system \eqref{eq:local-stra} gives exactly
\begin{equation}
2g(\nabla_{v}\bar{f},v)+\sum^{m}_{i=1}\left(-R(\sigma_{i},v,v,\sigma_{i})+|\eta_{i}(v)|^{2}_{g}\right),\label{eq:goal_intrinsic}
\end{equation}
where 
\[
\bar{f}=f+\frac{1}{2}\sum^{m}_{i=1}\nabla_{\sigma_{i}}\sigma_{i},\quad\eta_{i}(v)=\nabla_{v}\sigma_{i}+[\sigma\Omega_{x}(v)]_{i}.
\]

Fix $x\in M$ and choose Riemannian normal coordinates $(x^{1},\ldots,x^{n})$
centered at $x$. At $x$, 
\[
g_{ab}=\delta_{ab},\quad\partial_{r}g_{ab}=0,\quad\Gamma^{a}_{bc}=0.
\]
At the center of the normal coordinates we have 
\begin{align*}
\partial_{x^{r}}V & =0,\quad\partial_{v^{a}}V=2v^{a},\quad\partial^{2}_{x^{r}v^{a}}V=0,\\
\partial^{2}_{v^{a}v^{b}}V & =2\delta_{ab},\quad\partial^{2}_{x^{p}x^{q}}V=(\partial_{pq}g_{ab})v^{a}v^{b}.
\end{align*}
Hence, by the definition of $\widetilde{\mathcal{A}}_{\Omega}$, 
\begin{equation}
\widetilde{\mathcal{A}}_{\Omega}V=2v^{a}v^{r}\partial_{r}\tilde{f}^{a}+\frac{1}{2}\sum^{m}_{i=1}\sigma_{ip}\sigma_{iq}(\partial_{pq}g_{ab})v^{a}v^{b}+\sum^{m}_{i=1}|G_{i}|^{2}.\label{eq:normal-generator}
\end{equation}
where
\[
G^{a}_{i}=(\partial_{r}\sigma_{ia})v^{r}+[\sigma\Omega_{x}(v)]^{a}_{i}.
\]

Now, we expand \eqref{eq:goal_intrinsic} in the same local coordinates.
First, we compute $\eta_{i}$. At $x$, since $\Gamma^{a}_{bc}=0$,
\[
(\nabla_{v}\sigma_{i})^{a}=v^{r}\partial_{r}\sigma_{ia},
\]
and therefore 
\begin{equation}
\eta^{a}_{i}(v)=v^{r}\partial_{r}\sigma_{ia}+[\sigma\Omega_{x}(v)]^{a}_{i}\label{eq:eta-G_i}
\end{equation}
which is exactly $G^{a}_{i}$. 

Next, we evaluate $g(\nabla_{v}\bar{f},v)$. The coordinate components
of $\bar{f}$ are 
\[
\bar{f}^{a}=\tilde{f}^{a}+\frac{1}{2}\sum^{m}_{i=1}\Gamma^{a}_{pq}\sigma_{ip}\sigma_{iq}.
\]
Differentiating and evaluating at $x$ yields 
\[
\partial_{r}\bar{f}^{a}=\partial_{r}\tilde{f}^{a}+\frac{1}{2}\sum^{m}_{i=1}(\partial_{r}\Gamma^{a}_{pq})\sigma_{ip}\sigma_{iq}.
\]
Since $\Gamma(x)=0$, 
\[
g(\nabla_{v}\bar{f},v)=v^{a}v^{r}\partial_{r}\bar{f}^{a},
\]
and hence 
\begin{equation}
2g(\nabla_{v}\bar{f},v)=2v^{a}v^{r}\partial_{r}\tilde{f}^{a}+\sum^{m}_{i=1}v^{a}v^{r}(\partial_{r}\Gamma^{a}_{pq})\sigma_{ip}\sigma_{iq}.\label{eq:drift-normal}
\end{equation}

It remains to expand the curvature term. By metric compatibility,
\[
\partial_{p}g_{ab}=g_{cb}\Gamma^{c}_{pa}+g_{ac}\Gamma^{c}_{pb}.
\]
Differentiating and evaluating at $x$ gives 
\[
\partial_{pq}g_{ab}=\partial_{p}\Gamma^{b}_{qa}+\partial_{p}\Gamma^{a}_{qb}.
\]
Therefore, using the symmetry in $a,b$, 
\[
\frac{1}{2}\sigma_{ip}\sigma_{iq}(\partial_{pq}g_{ab})v^{a}v^{b}=\sigma_{ip}\sigma_{iq}v^{a}v^{b}\partial_{p}\Gamma^{a}_{qb}.
\]
On the other hand, since $\Gamma(x)=0$, 
\[
R^{a}{}_{qpb}=\partial_{p}\Gamma^{a}_{bq}-\partial_{b}\Gamma^{a}_{pq},
\]
so that, using $\Gamma^{a}_{bq}=\Gamma^{a}_{qb}$, 
\begin{equation}
\begin{aligned}-R & (\sigma_{i},v,v,\sigma_{i})=\sigma_{ip}\sigma_{iq}v^{a}v^{b}\left(\partial_{p}\Gamma^{a}_{qb}-\partial_{b}\Gamma^{a}_{pq}\right)\\
 & =\frac{1}{2}\sigma_{ip}\sigma_{iq}(\partial_{pq}g_{ab})v^{a}v^{b}-v^{a}v^{r}(\partial_{r}\Gamma^{a}_{pq})\sigma_{ip}\sigma_{iq}.
\end{aligned}
\label{eq:R-coord}
\end{equation}
Combining \eqref{eq:eta-G_i}, \eqref{eq:drift-normal} and \eqref{eq:R-coord}
together, we get
\begin{align*}
 & 2g(\nabla_{v}\bar{f},v)+\sum^{m}_{i=1}\left(-R(\sigma_{i},v,v,\sigma_{i})+|\eta_{i}(v)|^{2}_{g}\right)\\
= & 2v^{a}v^{r}\partial_{r}\tilde{f}^{a}+\sum^{m}_{i=1}v^{a}v^{r}(\partial_{r}\Gamma^{a}_{pq})\sigma^{p}_{i}\sigma^{q}_{i}+\sum^{m}_{i=1}|G_{i}|^{2}\\
 & \qquad-\sum^{m}_{i=1}R(\sigma_{i},v,v,\sigma_{i})\\
= & 2v^{a}v^{r}\partial_{r}\tilde{f}^{a}+\sum^{m}_{i=1}v^{a}v^{r}(\partial_{r}\Gamma^{a}_{pq})\sigma_{ip}\sigma_{iq}+\sum^{m}_{i=1}|G_{i}|^{2}\\
 & +\frac{1}{2}\sum_{i}\sigma_{ip}\sigma_{iq}(\partial_{pq}g_{ab})v^{a}v^{b}-\sum_{i}v^{a}v^{r}(\partial_{r}\Gamma^{a}_{pq})\sigma_{ip}\sigma_{iq}\\
= & 2v^{a}v^{r}\partial_{r}\tilde{f}^{a}+\sum^{m}_{i=1}|G_{i}|^{2}+\frac{1}{2}\sum_{i}\sigma_{ip}\sigma_{iq}(\partial_{pq}g_{ab})v^{a}v^{b}\\
= & \widetilde{A}_{\Omega}g(v,v),
\end{align*}
as desired.

Now, invoking Theorem \eqref{thm:O}, in local coordinates we have
$W_{2}$ contraction. However, the state may leave a coordinate chart
in finite time. We now globalize the local estimate. 

Let 
\(
(\Omega,\mathcal{F},\{\mathcal{F}_{t}\}_{t\ge0},\mathbb{P})
\)
be the filtered probability space on which the Brownian motion $B_{t}$
and the solutions are defined. We assume that the filtration satisfies
the usual conditions and that $B_{t}$ is an $\{\mathcal{F}_{t}\}$-Brownian
motion. Here $\mathcal{F}_{t}$ denotes the information available
up to time $t$; for instance, for the augmented natural filtration
of $B$, $\mathcal{F}_{t}=\sigma(B_{s}:0\le s\le t)\vee\mathcal{N}$,
where $\mathcal{N}$ denotes the collection of $\mathbb{P}$-null
sets.

Let $\{(U_{\alpha},\varphi_{\alpha})\}$ be an atlas of $M$. By refining
the atlas if necessary, choose relatively compact open sets \footnote{The notation $W_{\alpha}\Subset U_{\alpha}$ means $\overline{W_{\alpha}}\Subset U_{\alpha}$.}
\[
W_{\alpha}\Subset U_{\alpha},\quad\bigcup_{\alpha}W_{\alpha}=M.
\]
Let $V_{\alpha}(x,v)=g^{(\alpha)}_{ab}v^{a}v^{b}$. By the preceding
local-coordinate calculation, 
\[
\widetilde{A}^{(\alpha)}_{\Omega}V_{\alpha}=2g(\nabla_{v}\bar{f},v)+\sum^{m}_{i=1}[-R(\sigma_{i},v,v,\sigma_{i})+|\eta_{i}(v)|^{2}_{g}].
\]
Hence \eqref{eq:V-decay-Omega} implies 
\(
\widetilde{A}^{(\alpha)}_{\Omega}V_{\alpha}\le-2\lambda V_{\alpha}.
\)

We now concatenate the corresponding local estimates. Fix a compact
set $K\subset M$ containing the initial point. Since $K$ is compact,
there exist finitely many indices $\alpha_{1},\ldots,\alpha_{N}$
such that 
\[
K\subset\bigcup^{N}_{j=1}W_{\alpha_{j}},\quad W_{\alpha_{j}}\Subset U_{\alpha_{j}}.
\]
Define the measurable chart-selection map 
\[
\kappa(x):=\min\{j\in\{1,\ldots,N\}:x\in W_{\alpha_{j}}\},\quad x\in K.
\]
Let 
\[
\zeta_{K}:=\inf\{t\ge0:X_{t}\notin K\}
\]
be the exit time from $K$. We define the chart-switching times recursively.
Set 
\[
\tau_{0}:=0,\quad J_{0}:=\kappa(X_{0}),
\]
and, on the event $\{\tau_{k}<\zeta_{K}\}$, let 
\(
J_{k}:=\kappa(X_{\tau_{k}})
\),
and 
\begin{equation}
\tau_{k+1}:=\left(\inf\{t\ge\tau_{k}:X_{t}\notin U_{\alpha_{J_{k}}}\}\right)\wedge\zeta_{K}.\label{eq:chart-switch-time}
\end{equation}
If $\tau_{k}=\zeta_{K}$, we set $\tau_{k+1}=\zeta_{K}$. Thus, on
$[\tau_{k},\tau_{k+1})$, the process is represented in the chart
$U_{\alpha_{J_{k}}}$. Moreover, if $\tau_{k+1}<\zeta_{K}$, then
$X_{\tau_{k+1}}\in K$ and hence belongs to at least one of the sets
$W_{\alpha_{j}}$, so that $J_{k+1}$ is well defined.

At the stopping time $\tau_{k}$, the local coordinate initial conditions
are in general random. They are, however, $\mathcal{F}_{\tau_{k}}$-measurable,
where 
\[
\mathcal{F}_{\tau_{k}}:=\left\{ A\in\mathcal{F}:A\cap\{\tau_{k}\le t\}\in\mathcal{F}_{t}\text{ for every }t\ge0\right\} .
\]
Consider the shifted Brownian motion 
\[
B^{(k)}_{s}:=B_{\tau_{k}+s}-B_{\tau_{k}},\quad s\ge0.
\]
By the strong Markov property of Brownian motion, $B^{(k)}$ is, conditionally
on $\mathcal{F}_{\tau_{k}}$, a Brownian motion independent of $\mathcal{F}_{\tau_{k}}$.
Hence, conditionally on $\mathcal{F}_{\tau_{k}}$, the local dynamics
on $[\tau_{k},\tau_{k+1}]$ are precisely the Euclidean dynamics considered
previously, with the values at $\tau_{k}$ regarded as fixed initial
conditions.

Since the Euclidean exponential estimate holds for every deterministic
initial condition, it can therefore be applied conditionally at $\tau_{k}$.
Denote by $Z^{(k)}_{t}=\varphi_{\alpha_{J_{k}}}(X_{t})$ the local
coordinates on this interval and by $Y^{(k)}_{t}$ the corresponding
local variational variable (mean square derivative). Then, for every
$s\ge0$, {\small
\begin{equation}
\begin{aligned} & \mathbb{E}\Big[e^{2\lambda((\tau_{k}+s)\wedge\tau_{k+1}-\tau_{k})}V_{\alpha_{J_{k}}}\Big(Z^{(k)}_{(\tau_{k}+s)\wedge\tau_{k+1}},Y^{(k)}_{(\tau_{k}+s)\wedge\tau_{k+1}}\Big)\,\Big|\,\mathcal{F}_{\tau_{k}}\Big]\\
 & \hspace{4cm}\le V_{\alpha_{J_{k}}}\big(Z^{(k)}_{\tau_{k}},Y^{(k)}_{\tau_{k}}\big).
\end{aligned}
\label{eq:chart-esti}
\end{equation}
}

We next observe that the quantity appearing in \eqref{eq:chart-esti}
is unchanged when coordinates are switched. Suppose that $X_{\tau_{k+1}}\in U_{\alpha_{J_{k}}}\cap U_{\alpha_{J_{k+1}}}$
and let $\psi=\varphi_{\alpha_{J_{k+1}}}\circ\varphi^{-1}_{\alpha_{J_{k}}}$
be the transition map. The local tangent coordinates satisfy 
\[
Y^{(k+1)}_{\tau_{k+1}}=D\psi\big(Z^{(k)}_{\tau_{k+1}}\big)Y^{(k)}_{\tau_{k+1}}.
\]
Since the components of the Riemannian metric transform tensorially,
\begin{equation}
V_{\alpha_{J_{k+1}}}\big(Z^{(k+1)}_{\tau_{k+1}},Y^{(k+1)}_{\tau_{k+1}}\big)=V_{\alpha_{J_{k}}}\big(Z^{(k)}_{\tau_{k+1}},Y^{(k)}_{\tau_{k+1}}\big).\label{eq:Vk+1=00003DVk}
\end{equation}
Thus no chart-dependent multiplicative constants are introduced when
passing from one coordinate chart to another.

It remains to note that only finitely many chart switches can occur
before $\zeta_{K}$ on every finite time interval. Since the cover
is finite and $W_{\alpha_{j}}\Subset U_{\alpha_{j}}$, we have 
\[
\delta:=\min_{1\le j\le N}d_{g}\big(\overline{W_{\alpha_{j}}},M\setminus U_{\alpha_{j}}\big)>0.
\]
At time $\tau_{k}$, the process lies in $W_{\alpha_{J_{k}}}$, while
a genuine chart switch at $\tau_{k+1}<\zeta_{K}$ occurs only after
the process has exited $U_{\alpha_{J_{k}}}$. Hence 
\(
d_{g}(X_{\tau_{k}},X_{\tau_{k+1}})\ge\delta.
\)
Since the sample paths are continuous, they are uniformly continuous
on every compact time interval. Therefore infinitely many such switches
cannot occur in finite time.

Consequently, for every fixed $t$, the stopped path up to $t\wedge\zeta_{K}$
is covered by finitely many coordinate intervals $[\tau_{k},\tau_{k+1}]$.
Applying \eqref{eq:chart-esti} successively on these intervals, using
\eqref{eq:Vk+1=00003DVk} at each switching time and the tower property
of conditional expectation, yields 
\begin{equation}
\mathbb{E}\left[e^{2\lambda(t\wedge\zeta_{K})}\mathcal{V}_{t\wedge\zeta_{K}}\right]\le\mathcal{V}_{0},\label{eq:esti-com-V}
\end{equation}
where $\mathcal{V}_{t}$ denotes the common value $\mathcal{V}_{t}=V_{\alpha}\big(Z^{(\alpha)}_{t},Y^{(\alpha)}_{t}\big)$
in any chart containing $X_{t}$. By \eqref{eq:Vk+1=00003DVk}, this
value is independent of the chosen chart.

Finally, choose a compact exhaustion 
\[
K_{1}\Subset K_{2}\Subset\cdots,\quad\bigcup^{\infty}_{n=1}K_{n}=M,
\]
with $X_{0}\in K_{1}$, and define 
\[
\zeta_{n}:=\inf\{t\ge0:X_{t}\notin K_{n}\}.
\]
Under the nonexplosion assumption, $\zeta_{n}\to \infty$ {almost surely}.
For every fixed $t$, 
\(
t\wedge\zeta_{n} \to t
\) {almost surely}.
Since $\mathcal{V}_{t}\ge0$, Fatou's lemma and \eqref{eq:esti-com-V}
give 
\[
\begin{aligned}\mathbb{E}\left[e^{2\lambda t}\mathcal{V}_{t}\right] & \le\liminf_{n\to\infty}\mathbb{E}\left[e^{2\lambda(t\wedge\zeta_{n})}\mathcal{V}_{t\wedge\zeta_{n}}\right]\\
 & \le\mathcal{V}_{0}.
\end{aligned}
\]
for every $t\ge0$. From this we deduce 
\[
\mathbb{E}|Y_{t}|^{2}_{g}\le e^{-2\lambda t}\mathbb{E}|Y_{0}|^{2}.
\]
Now, proceeding as in the proof of Theorem \ref{thm:stra-Rie} we
can show
\[
\sqrt{\mathbb{E}d^{2}_{g}(X^{x}_{t},X^{y}_{t})}\le e^{-\lambda t}d_{g}(x,y).
\]

\subsection{Proof of Proposition \ref{prop:noise-assisted-sphere}}

(i) Let 
\[
s_{i}(x):=P_{x}e_{i},\quad\sigma_{i}(x):=a_{\varepsilon}(x)s_{i}(x),\quad i=1,2,3,
\]
where $\{e_{i}\}^{3}_{i=1}$ is the standard basis of $\mathbb{R}^{3}$.
We first compute the covariant Itô drift appearing in Theorem~\ref{thm:coup-mani}.
On $S^{2}$, 
\[
\nabla_{v}s_{i}=-x_{i}v,\quad\sum^{3}_{i=1}\nabla_{s_{i}}s_{i}=0.
\]
It follows that 
\begin{align*}
\sum^{3}_{i=1}\nabla_{\sigma_{i}}\sigma_{i} & =a_{\varepsilon}\sum^{3}_{i=1}s_{i}(a_{\varepsilon})s_{i}+a^{2}_{\varepsilon}\sum^{3}_{i=1}\nabla_{s_{i}}s_{i}\\
 & =a_{\varepsilon}\nabla a_{\varepsilon}=\frac{1}{2}\nabla a^{2}_{\varepsilon}.
\end{align*}
Since $a^{2}_{\varepsilon}=k(1+\varepsilon-h)$, we obtain 
\[
\sum^{3}_{i=1}\nabla_{\sigma_{i}}\sigma_{i}=-\frac{k}{2}\nabla h.
\]
Hence 
\begin{equation}
\overline{f}=k\nabla h+\frac{1}{2}\sum^{3}_{i=1}\nabla_{\sigma_{i}}\sigma_{i}=\frac{3k}{4}\nabla h.\label{eq:noise-assisted-bar-f}
\end{equation}

We next construct the differential coupling. For $v\in T_{x}S^{2}$,
take $\Omega_{x}(v)=vx^{\top}-xv^{\top}$. Since $P_{x}\Omega_{x}(v)=vx^{\top}$,
we have 
\[
[\sigma\Omega_{x}(v)]_{i}=a_{\varepsilon}x_{i}v.
\]
On the other hand, 
\[
\nabla_{v}\sigma_{i}=a_{\varepsilon}'(h)dh(v)s_{i}-a_{\varepsilon}x_{i}v.
\]
Therefore the second term is canceled by the orthogonal differential
coupling, and 
\begin{equation}
\eta_{i}(v)=\nabla_{v}\sigma_{i}+[\sigma\Omega_{x}(v)]_{i}=a_{\varepsilon}'(h)\,dh(v)s_{i}.\label{eq:noise-assisted-eta}
\end{equation}
Since $\sum^{3}_{i=1}|s_{i}|^{2}={\rm tr}(P_{x})=2$, and
\[
|dh(v)|^{2}\le|\nabla h|^{2}|v|^{2}=(1-h^{2})|v|^{2},
\]
\[
\bigl(a_{\varepsilon}'(h)\bigr)^{2}=\frac{k}{4(1+\varepsilon-h)}
\]
we have
\begin{align}
\sum^{3}_{i=1}|\eta_{i}(v)|^{2} & =2\bigl(a_{\varepsilon}'(h)\bigr)^{2}\bigl(dh(v)\bigr)^{2}\label{eq:noise-assisted-eta-sum}\\
 & \le\frac{k(1-h^{2})|v|^{2}}{2(1+\varepsilon-h)}
\end{align}

Moreover, ${\rm Hess}h=-hg$, and hence, using \eqref{eq:noise-assisted-bar-f},
\begin{align*}
g(\nabla_{v}\overline{f},v) & =g(\frac{3k}{4}\nabla_{v}\nabla h,v)\\
 & =-\frac{3k}{4}{\rm Hess}h(v,v)\\
 & =-\frac{3k}{4}h|v|^{2}.
\end{align*}
Finally, the unit sphere $S^{2}$ has Ricci curvature ${\rm Ric}=g$.
Therefore 
\begin{align}
\sum^{3}_{i=1}R(\sigma_{i},v,v,\sigma_{i}) & =a^{2}_{\varepsilon}|v|^{2}=k(1+\varepsilon-h)|v|^{2}\label{eq:noise-assisted-curvature}
\end{align}
Substitution into \eqref{eq:thm-coup-mani} gives 
\begin{align*}
 & g(\nabla_{v}\overline{f},v)+\frac{1}{2}\sum^{3}_{i=1}\left\{ -R(\sigma_{i},v,v,\sigma_{i})+|\eta_{i}(v)|^{2}\right\} \\
 & \quad\le\left[-\frac{3k}{4}h-\frac{k}{2}(1+\varepsilon-h)+\frac{k}{4}\frac{1-h^{2}}{1+\varepsilon-h}\right]|v|^{2}.
\end{align*}
Since $h\in[-1,1]$, 
\[
\frac{1-h^{2}}{1+\varepsilon-h}=\frac{(1-h)(1+h)}{1+\varepsilon-h}\le1+h.
\]
Consequently, 
\begin{align*}
 & g(\nabla_{v}\overline{f},v)+\frac{1}{2}\sum^{3}_{i=1}\left\{ -R(\sigma_{i},v,v,\sigma_{i})+|\eta_{i}(v)|^{2}\right\} \\
 & \quad\le-\left(\frac{k}{4}+\frac{k\varepsilon}{2}\right)|v|^{2}.
\end{align*}
The contraction estimate \eqref{eq:noise-assisted-contraction} therefore
follows from Theorem~\ref{thm:coup-mani}.

(ii) The infinitesimal generator of \eqref{eq:noise-assisted-sphere}
is 
\begin{equation}
L\varphi=\frac{3k}{4}\left\langle \nabla h,\nabla\varphi\right\rangle +\frac{1}{2}q\Delta\varphi,\label{eq:noise-assisted-generator}
\end{equation}
where $q:=k(1+\varepsilon-h)$ whose adjoint operator is
\[
L^{*}\rho=-{\rm div}\left(\frac{3k}{4}\rho\nabla h\right)+\frac{1}{2}\Delta(\rho q).
\]
To show $\pi_{\epsilon}=\rho_{\epsilon}d{\rm vol}(x)$ is an invariant
measure, it suffices to show that $L^{*}\rho_{\epsilon}=0$. Note
that
\begin{align*}
\nabla(\rho_{\epsilon}q) & =\nabla\left(\frac{k}{Z_{\varepsilon}}(1+\varepsilon-h)^{-3/2}\right)=\frac{3k}{2}\rho_{\epsilon}\nabla h\\
 & \implies\frac{1}{2}\Delta(\rho_{\epsilon}q)={\rm div}\left(\frac{3k}{4}\rho_{\epsilon}\nabla h\right)\\
 & \implies L^{*}\rho_{\epsilon}=0,
\end{align*}
as desired.

(iii) Use the azimuthal coordinate $(\theta,\varphi)$ on the sphere,
with $d_{g}(x,x_{\star})=\theta\in[0,\pi]$ and volume form $d{\rm vol}=\sin\theta d\theta d\phi$.

We shall estimate
\[
W^{2}_{2}(\pi_{\epsilon},\delta_{x_{*}})=\mathbb{E}_{\pi_{\epsilon}}[d^{2}_{g}(x,x_{\star})]=\mathbb{E}_{\pi_{\epsilon}}\theta^{2}.
\]
Note $h(x)=x^{\top}_{\star}x=\cos\theta$. For $\theta\in[0,\pi]$,
we have
\[
\theta^{2}\le\frac{\pi^{2}}{2}(1-\cos\theta).
\]
Let $p=1-\cos\theta$. It suffices to estimate $\mathbb{E}_{\pi_{\epsilon}}[p]$.
By \eqref{eq:noise-assisted-invariant},
\[
\pi_{\epsilon}=\rho_{\epsilon}d{\rm vol}(x)=\frac{1}{Z_{\epsilon}}(\epsilon+p)^{-5/2}\sin\theta d\theta d\phi.
\]
Thus
\begin{align*}
\mathbb{E}_{\pi_{\epsilon}}p & =2\pi\int^{\pi}_{0}p\frac{1}{Z_{\epsilon}}(\epsilon+p)^{-5/2}\sin\theta d\theta\\
 & =\frac{2\pi}{Z_{\epsilon}}\int^{2}_{0}p(\epsilon+p)^{-5/2}dp\\
 & =\frac{\int^{2}_{0}p(\epsilon+p)^{-5/2}dp}{\int^{2}_{0}(\epsilon+p)^{-5/2}dp}\\
 & =2\epsilon+O(\epsilon^{3/2})
\end{align*}

It follows that 
\[
W_{2}(\pi_{\varepsilon},\delta_{x_{\star}})=O(\sqrt{\varepsilon}),
\]
which proves \eqref{eq:pi-concentration}.

\bibliographystyle{siam}
\bibliography{OT,GeometricControl}

\begin{thebibliography}{10}

\bibitem{ahmad2008stochastic}
{\sc S.~S. Ahmad, N.~Rajan, and S.~Raha}, {\em The stochastic logarithmic norm for stability analysis of stochastic differential equations}, arXiv preprint arXiv:0809.0062,  (2008).

\bibitem{aminzare2022stochastic}
{\sc Z.~Aminzare}, {\em Stochastic logarithmic lipschitz constants: A tool to analyze contractivity of stochastic differential equations}, IEEE control systems letters, 6 (2022), pp.~2311--2316.

\bibitem{arnold1974stochastic}
{\sc L.~Arnold}, {\em Stochastic differential equations}, New York : Wiley, 1974.

\bibitem{bakry1985diffusions}
{\sc D.~Bakry and M.~{\'E}mery}, {\em Diffusions hypercontractives}, in S{\'e}minaire de Probabilit{\'e}s XIX, 1983/84, vol.~1123 of Lecture Notes in Mathematics, Springer, 1985, pp.~177--206.

\bibitem{baudoin2016wasserstein}
{\sc F.~Baudoin}, {\em Wasserstein contraction properties for hypoelliptic diffusions}, arXiv preprint arXiv:1602.04177,  (2016).

\bibitem{bouvrie2019wasserstein}
{\sc J.~Bouvrie and J.-J. Slotine}, {\em Wasserstein contraction of stochastic nonlinear systems}, arXiv preprint arXiv:1902.08567,  (2019).

\bibitem{bulloBook2026}
{\sc F.~Bullo}, {\em Contraction Theory for Dynamical Systems}, Kindle Direct Publishing, {1.3}~ed., 2026.

\bibitem{cheng2021exponential}
{\sc L.-J. Cheng, A.~Thalmaier, and S.-Q. Zhang}, {\em Exponential contraction in wasserstein distance on static and evolving manifolds}, Revue Roumaine de Math{\'e}matiques Pures et Appliqu{\'e}es, 66 (2021), pp.~107--129.

\bibitem{crampin1986applicable}
{\sc M.~Crampin and F.~Pirani}, {\em Applicable Differential Geometry}, vol.~59, Cambridge University Press, 1986.

\bibitem{dani2014observer}
{\sc A.~P. Dani, S.-J. Chung, and S.~Hutchinson}, {\em Observer design for stochastic nonlinear systems via contraction-based incremental stability}, IEEE Transactions on Automatic Control, 60 (2014), pp.~700--714.

\bibitem{davydov2022non}
{\sc A.~Davydov, S.~Jafarpour, and F.~Bullo}, {\em Non-euclidean contraction theory for robust nonlinear stability}, IEEE Transactions on Automatic Control, 67 (2022), pp.~6667--6681.

\bibitem{demidovich1967lectures}
{\sc B.~P. Demidovich}, {\em Lectures on stability theory}, 1967.

\bibitem{forni2013differential}
{\sc F.~Forni and R.~Sepulchre}, {\em A differential {L}yapunov framework for contraction analysis}, IEEE Transactions on Automatic Control, 59 (2014), pp.~614--628.

\bibitem{ikeda1989stochastic}
{\sc N.~Ikeda and S.~Watanabe}, {\em Stochastic differential equations and diffusion processes}, vol.~24, North-Holland Publishing Company, 1989.

\bibitem{kawano2019contraction}
{\sc Y.~Kawano, B.~Besselink, and M.~Cao}, {\em Contraction analysis of monotone systems via separable functions}, IEEE Transactions on Automatic Control,  (2019).

\bibitem{kawano2026incremental}
{\sc Y.~Kawano, S.~Betteti, A.~Davydov, and F.~Bullo}, {\em Incremental input-to-state stability and equilibrium tracking for stochastic contracting dynamics}, arXiv preprint arXiv:2602.18382,  (2026).

\bibitem{kawano2023contraction}
{\sc Y.~Kawano and Y.~Hosoe}, {\em Contraction analysis of discrete-time stochastic systems}, IEEE Transactions on Automatic Control, 69 (2023), pp.~982--997.

\bibitem{kawano2025contraction}
\leavevmode\vrule height 2pt depth -1.6pt width 23pt, {\em Contraction analysis of differentially positive discrete-time stochastic systems}, IEEE Transactions on Automatic Control, 70 (2025), pp.~5238--5253.

\bibitem{Koo2026}
{\sc S.~Koo, J.~G. Lee, and H.~Shim}, {\em Continuous-time stochastic contraction induced by common white noise}, Preprint,  (2026).

\bibitem{kunita1990stochastic}
{\sc H.~Kunita}, {\em Stochastic flows and stochastic differential equations}, vol.~24, Cambridge university press, 1990.

\bibitem{Kushner1967}
{\sc H.~J. Kusher}, {\em Stochastic Stability and Control}, Academic Press, 1967.

\bibitem{kuwada2010duality}
{\sc K.~Kuwada}, {\em Duality on gradient estimates and wasserstein controls}, Journal of Functional Analysis, 258 (2010), pp.~3758--3774.

\bibitem{lang1993real}
{\sc S.~Lang}, {\em Real and Functional Analysis}, vol.~142 of Graduate Texts in Mathematics, Springer, 3~ed., 1993.

\bibitem{li1992stochastic}
{\sc X.-M. Li}, {\em Stochastic Flows on Non-compact Manifolds}, PhD thesis, University of Warwick, 1992.
\newblock Available as arXiv:2105.15017.

\bibitem{lohmiller1998contraction}
{\sc W.~Lohmiller and J.-J.~E. Slotine}, {\em On contraction analysis for non-linear systems}, Automatica, 34 (1998), pp.~683--696.

\bibitem{lohmiller1999}
{\sc W.~S. Lohmiller}, {\em Contraction analysis of nonlinear systems}, PhD thesis, Massachusetts Institute of Technology, 1999.

\bibitem{manchester2017control}
{\sc I.~R. Manchester and J.-J.~E. Slotine}, {\em Control contraction metrics: Convex and intrinsic criteria for nonlinear feedback design}, IEEE Transactions on Automatic Control, 62 (2017), pp.~3046--3053.

\bibitem{MONMARCHE2023}
{\sc P.~Monmarché}, {\em Almost sure contraction for diffusions on rd. application to generalized langevin diffusions}, Stochastic Processes and their Applications, 161 (2023), pp.~316--349.

\bibitem{pascu2016shy}
{\sc M.~N. Pascu and I.~Popescu}, {\em Shy and fixed-distance couplings of {B}rownian motions on manifolds}, Stochastic Processes and their Applications, 126 (2016), pp.~628--650.

\bibitem{pham2013stochastic}
{\sc Q.-C. Pham and J.-J. Slotine}, {\em Stochastic contraction in riemannian metrics}, arXiv preprint arXiv:1304.0340,  (2013).

\bibitem{pham2009contraction}
{\sc Q.-C. Pham, N.~Tabareau, and J.-J. Slotine}, {\em A contraction theory approach to stochastic incremental stability}, IEEE Transactions on Automatic Control, 54 (2009), pp.~816--820.

\bibitem{Pro1992}
{\sc P.~Protter}, {\em Stochastic Integration and Differential Equation}, Springer-Verlag, Berlin, Heidelberg, second~ed., 1992.

\bibitem{reyes2017tracking}
{\sc R.~Reyes-B{\'a}ez, A.~van~der Schaft, and B.~Jayawardhana}, {\em Tracking control of fully-actuated port-hamiltonian mechanical systems via sliding manifolds and contraction analysis}, IFAC-PapersOnLine, 50 (2017), pp.~8256--8261.

\bibitem{reyes2018virtual}
\leavevmode\vrule height 2pt depth -1.6pt width 23pt, {\em Virtual differential passivity based control for tracking of flexible-joints robots}, IFAC-PapersOnLine, 51 (2018), pp.~169--174.

\bibitem{ruffer2013convergent}
{\sc B.~S. R{\"u}ffer, N.~Van De~Wouw, and M.~Mueller}, {\em Convergent systems vs. incremental stability}, Systems \& Control Letters, 62 (2013), pp.~277--285.

\bibitem{Russo2010}
{\sc G.~Russo, M.~di~Bernardo, and E.~D. Sontag}, {\em Global entrainment of transcriptional systems to periodic inputs}, PLOS Computational Biology, 6 (2010), pp.~1--26.

\bibitem{sanfelice2011convergence}
{\sc R.~G. Sanfelice and L.~Praly}, {\em Convergence of nonlinear observers on $\mathbb{R}^n$ with a riemannian metric (part i)}, IEEE Transactions on Automatic Control, 57 (2011), pp.~1709--1722.

\bibitem{simpson2014contraction}
{\sc J.~W. Simpson-Porco and F.~Bullo}, {\em Contraction theory on {R}iemannian manifolds}, Systems \& Control Letters, 65 (2014), pp.~74--80.

\bibitem{sontag2010contractive}
{\sc E.~D. Sontag}, {\em Contractive systems with inputs}, in Perspectives in mathematical system theory, control, and signal processing, Springer, 2010, pp.~217--228.

\bibitem{thanwerdas2023}
{\sc Y.~Thanwerdas and X.~Pennec}, {\em {$O(n)$-Invariant Riemannian Metrics on SPD Matrices}}, Linear Algebra and its Applications, 661 (2023), pp.~163--201.

\bibitem{villani2021topics}
{\sc C.~Villani}, {\em Topics in Optimal Transportation}, vol.~58, American Mathematical Soc., 2021.

\bibitem{vonrenesse2005transport}
{\sc M.-K. von Renesse and K.-T. Sturm}, {\em Transport inequalities, gradient estimates, entropy and ricci curvature}, Communications on Pure and Applied Mathematics, 58 (2005), pp.~923--940.

\bibitem{wang2020exponential}
{\sc F.-Y. Wang}, {\em Exponential contraction in wasserstein distances for diffusion semigroups with negative curvature}, Potential Analysis, 53 (2020), pp.~1123--1144.

\bibitem{Wu2021}
{\sc D.~Wu and G.-R. Duan}, {\em Further geometric and {L}yapunov characterizations of incrementally stable systems on {F}insler manifolds}, IEEE Transactions on Automatic Control, 67 (2021), pp.~5614--5621.

\bibitem{wu2024stability}
{\sc D.~Wu, B.~Yi, and A.~Rantzer}, {\em Stability analysis of trajectories on manifolds with applications to observer and controller design}, IEEE Transactions on Automatic Control, 69 (2024), pp.~7104--7111.

\end{thebibliography}

\end{document}